\newcommand\sbullet[1][.75]{\mathbin{\ThisStyle{\vcenter{\hbox{%
					\scalebox{#1}{$\SavedStyle\bullet$}}}}}%
}
\newcommand{\geneg}{\mathfrak g _-}
\newcommand{\gebar}{\bar{\mathfrak g}}
\newcommand{\R}{\mathbb R}
\newcommand{\Z}{\mathbb Z}
\newcommand{\C}{\mathbb C}
\newcommand{\E}{\mathcal E}
\newcommand{\cL}{\mathcal L}
\newcommand{\fL}{\mathfrak L}
\newcommand{\cD}{\mathcal D}
\newcommand{\cP}{\mathcal P}
\newcommand{\cR}{R}
\newcommand{\f}{\mathfrak f}
\newcommand{\uf}{\underline{\f}}
\newcommand{\g}{\mathfrak g}
\newcommand{\gl}{\mathfrak{gl}}
\newcommand{\fsp}{\mathfrak{sp}}
\newcommand{\lf}{\mathfrak l}
\newcommand{\kf}{\mathfrak k}
\newcommand{\pf}{\mathfrak p}
\newcommand{\m}{\mathfrak m}
\newcommand{\n}{\mathfrak n}
\newcommand{\wJ}{\widehat J}
\newcommand{\wj}{\hat j}
\newcommand{\wX}{\hat X}
\newcommand{\wY}{\hat Y}
\newcommand{\om}{\omega}
\newcommand{\V}{V}
\newcommand{\sll}{\mathfrak{sl}}
\newcommand{\p}{\partial}
\newcommand{\Ad}{\operatorname{Ad}}
\newcommand{\ad}{\operatorname{ad}}
\newcommand{\id}{\operatorname{id}}
\newcommand{\Aut}{\operatorname{Aut}}
\newcommand{\Gr}{\operatorname{Gr}}
\newcommand{\Flag}{\operatorname{Flag}}
\newcommand{\Order}{\operatorname{Order}}
\newcommand{\Sym}{\operatorname{Sym}}
\newcommand{\Sol}{\operatorname{Sol}}
\newcommand{\im}{\operatorname{im}}
\newcommand{\gr}{\operatorname{gr}}
\newcommand{\Hom}{\operatorname{Hom}}
\newcommand{\word}{\operatorname{w-ord}}
\newcommand{\Ker}{\operatorname{Ker}}
\newcommand{\Prol}{\operatorname{Prol}}
\newcommand{\Tr}{\operatorname{Tr}}
\newcommand{\tr}{\operatorname{tr}}
\DeclareRobustCommand{\amgiS}{\text{\reflectbox{$\Sigma$}}}
\newtheorem{thm}{Theorem}[section]
\newtheorem*{thm0}{Theorem}
\newtheorem{lem}[thm]{Lemma}
\newtheorem{prop}[thm]{Proposition}
\newtheorem*{cor}{Corollary}
\theoremstyle{definition}
\newtheorem{df}[thm]{Definition}
\newtheorem{ex}[thm]{Example}
\newtheorem{rem}[thm]{Remark}}
\numberwithin{equation}{section}
\begin{document}
%\allowdisplaybreaks

\newcommand{\arXivNumber}{1904.05687}

\renewcommand{\PaperNumber}{061}

\FirstPageHeading

\ShortArticleName{Extrinsic Geometry and Linear Differential Equations}

\ArticleName{Extrinsic Geometry and Linear Differential Equations}

\Author{Boris DOUBROV~$^{\rm a}$, Yoshinori MACHIDA~$^{\rm b}$ and Tohru MORIMOTO~$^{\rm cd}$}

\AuthorNameForHeading{B.~Doubrov, Y.~Machida and T.~Morimoto}

\Address{$^{\rm a)}$~Faculty of Mathematics and Mechanics, Belarusian State University,\\
\hphantom{$^{\rm a)}$}~Nezavisimosti ave.~4, Minsk 220030, Belarus}
\EmailD{\href{mailto:doubrov@bsu.by}{doubrov@bsu.by}}

\Address{$^{\rm b)}$~Shizuoka University, Shizuoka 422-8529, Japan}
\EmailD{\href{mailto:yomachi212@gmail.com}{yomachi212@gmail.com}}

\Address{$^{\rm c)}$~Seki Kowa Institute of Mathematics, Yokkaichi University, Yokkaichi 512-8045, Japan}

\Address{$^{\rm d)}$~Institut Kiyoshi Oka de Math\'ematiques, Nara Women's University, Nara 630-8506, Japan}
\EmailD{\href{mailto:morimoto@cc.nara-wu.ac.jp}{morimoto@cc.nara-wu.ac.jp}}

\ArticleDates{Received June 04, 2020, in final form June 04, 2021; Published online June 17, 2021}

\Abstract{We give a unified method for the general equivalence problem of extrinsic geo\-met\-ry, on the basis of our formulation of a general extrinsic geometry as that of	an osculating map $\varphi\colon (M,\mathfrak f) \to L/L^0 \subset \operatorname{Flag}(V,\phi)$ from a filtered manifold $(M,\mathfrak f)$ to a homogeneous space~$L/L^0$ in a flag variety $\operatorname{Flag}(V,\phi)$, where $L$ is a finite-dimensional Lie group and $L^0$ its closed subgroup. We establish an algorithm to obtain the complete systems of invariants for the osculating maps which satisfy the reasonable regularity condition of constant symbol of~type $(\mathfrak g_-, \operatorname{gr} V, L)$. We show the categorical isomorphism between the extrinsic geometries in flag varieties and the (weighted) involutive systems of linear differential equations of~finite type. Therefore we also obtain a complete system of invariants for a general involutive systems of linear differential equations of~finite type and of constant symbol. The invariants of an osculating map (or an involutive system of linear differential equations) are proved to be controlled by the cohomology group $H^1_+(\mathfrak g_-, \mathfrak l / \bar{\mathfrak g})$ which is defined algebraically from the symbol of the osculating map (resp.~involutive system), and which, in many cases (in~particular, if the symbol is associated with a simple Lie algebra and its irreducible representation), can be computed by the algebraic harmonic theory, and the vanishing of which gives rigidity theorems in various concrete geometries. We also extend the theory to the case when $L$ is infinite dimensional.}

\Keywords{extrinsic geometry; filtered manifold; flag variety; osculating map; involutive systems of linear differential equations; extrinsic Cartan connection; rigidity of rational homogeneous varieties}

\Classification{53A55; 53C24; 53C30; 53D10}

\begin{flushright}
\it Dedicated to \'Elie Cartan on the 150th anniversary of his birth
\end{flushright}

\section{Introduction}\label{sec1}

In geometry the distinction between intrinsic and extrinsic geometry is fundamental. The former treats spaces, while the latter figures.

A space may be defined to be a set $S$ equipped with a geometric structure $\gamma$ on $S$, which can be defined by assigning a subset of certain associated set to~$S$. A figure may be understood to be a subset of a space, or rather a map $\varphi \colon X \to A $ from a space $X$ to a space $A$.

Two spaces $X =(S_X, \gamma_X) $ and $Y =(S_Y, \gamma_Y) $ are said to be (intrinsically) equivalent or~iso\-mor\-phic if there exists a bijection $f\colon S_X \to S_Y $ satisfying $f_* \gamma_X= \gamma_Y$, where $f_* $ denotes the associated map to $f$.

Two figures $\varphi\colon X \to A $ and $ \psi\colon Y \to B $ are said to be (extrinsically) equivalent or isomorphic if there exist isomorphisms of spaces $f\colon X \to Y $ and $F\colon A \to B$ such that $ F\circ \varphi = \psi \circ f $. Without loss of generality, we may assume that the ambient spaces
$A$ and $B$ coincide and equal to a homogeneous space $L/L^0$ with a group $L$ and its subgroup $L^0$. Then the isomorphisms $F\colon L/L^0 \to L/L^0$
should be understood to be the left translations $\Lambda _a $ by $a \in L$.

We may then say that intrinsic geometry studies those properties of spaces $X$ that are invariant under intrinsic equivalences, and extrinsic geometry those of figures $\varphi\colon X \to L/L^0$ invariant under extrinsic equivalences.

In both geometries one of the fundamental problems is the so-called equivalence problem, that is to find the criteria to determine whether two spaces or two figures are equivalent or not, which, in smooth or analytic category, leads to the problem of finding (the complete) differential invariants of a space or a figure arbitrary given.

For intrinsic geometry we know now well established general theories which have been deve\-lo\-ped by Lie, Klein, Cartan and then
by many authors, in particular, in~\cite{car1908,car1936,Mor1983,Mor1993,sin-st65,st64,tan70,tan79}.

For extrinsic geometry which has much longer history we know a great amount of works done in various concrete problems. However, the general theory does not seem to have been fully developed.

The main purpose of the present paper is to develop a systematic study on extrinsic geometry in general setting of nilpotent geometry so as to be comparable to the theories in intrinsic geometry.

The objects of extrinsic geometry with which we are mainly concerned and which are the most general by many reasons are the morphisms, or in geometrical terms, osculating maps,
\[
\varphi\colon\ (M, \mathfrak f) \to L/L^0 \subset \Flag(V, \phi)
\]
from a filtered manifold $(M, \mathfrak f)$ to a homogeneous space $L/L^0$ in a flag variety $\Flag(V, \phi)$.

We shall then give an algorithm to find the complete invariants of the osculating maps $\varphi$ under certain reasonable conditions.

The framework developed here is a nilpotent generalization in extrinsic geometry of the Cartan method (of bundles) of moving frames, and is in good harmony with the one in intrinsic geometry developed in~\cite{Mor1993,tan70,tan79}. The two methods in intrinsic and extrinsic geometry thus integrated will give a unified view for clearer understanding of geometry as well as for applications to differential equations.

Now let us explain the content of paper more precisely by following the
sections.

In Section~\ref{sec2} first we recall the notion of filtered manifold.
It is a generalization and a~refi\-nement of that of usual manifold and is defined to be a smooth manifold $M$ endowed with a~tangential filtration $\f = \{\f^p\}_{p \in \Z}$. At each point $x \in M $ there is associated a nilpotent graded Lie algebra $\gr\f_x$
which, taking the place of usual tangent space, plays a fundamental role in the method of nilpotent approximation initiated by Tanaka~\cite{tan70,tan79}.

We then fix the notation for the flag variety $\Flag(V, \phi)$ by defining it as the manifold consisting of all filtration of $V$ isomorphic to a given filtration $\phi$, where $V$ is a vector space over~$\mathbb R$ or~$\mathbb C$ assumed to be finite dimensional unless otherwise stated,
therefore it is represented as a~homogeneous space $\Flag(V, \phi )= {\rm GL}(V)/{\rm GL}(V)^0$, where ${\rm GL}(V)^0$ is the isotropy subgroup at~$\phi$.
Moreover it is a filtered manifold with natural left invariant tangential filtration.

Then we call a map $\varphi \colon (M, \mathfrak f ) \to L/L^0 \subset \Flag(V, \phi)$ \emph{osculating} if
\[
\underline{\f^p }\,\underline{\varphi^q }
\subset \underline{\varphi^{p+q} }\qquad \text {for all}\quad p, q \in \mathbb Z.
\]
This is equivalent to saying that the map $\varphi\colon(M, \mathfrak f ) \to \Flag(V, \phi)$ is a morphism of filtered manifolds.

In Section~\ref{sec:cat} we introduce, for a given homogeneous space $L/L^0 \subset \Flag(V, \phi) $, three categories, that is, those of $L/L^0$ extrinsic geometries, $L/L^0$ differential equations and $L/L^0$ extrinsic bundles.

The first one is just the category whose objects are the osculating maps $\varphi\colon (M, \mathfrak f) \to L/L^0 \subset \Flag(V, \phi )$.

The second one represents the category of (weighted) involutive systems of linear differential equations defined in weighted jet bundles (of finite type if $V$ is finite dimensional). See~\cite{Mor2002} for the definition of (weighted) involutive systems.

The third one is the category of the principal fibre bundles $P$ over filtered manifolds $(M, \f )$ with structure group $K^0 \subset L^0$ equipped with an $\mathfrak l$ valued 1-form $\omega$ satisfying above all ${\rm d}\omega + \frac 12 [\omega, \omega] = 0$ as well as some natural conditions with respect to the bundle structure.

In each category we define the notion of a congruence class to clarify our implicit identification of those objects that are congruent.
For instance, two extrinsic bundles $\big(P, K^0, (M, \f), \omega\big)$ and~$(P', K'^0, (M', \f), \omega ')$ are called congruent if the extensions of structure groups to $L^0$ of $P$ and~$P'$ are isomorphic with $(M, \f)=(M', \f)$, the induced map on the base spaces being the identity map.

In Section~\ref{sec4} we show that these three categories are categorically isomorphic up to congruences and coverings.
Thus, roughly speaking, the extrinsic geometry in flag manifold and the geometry of involutive systems of linear differential equations are equivalent and their equivalence problem reduces to that of extrinsic bundles.

In Section~\ref{sec5} we study the equivalence problem for a class of submanifolds in $L/L^0$. The first basic invariant of an osculating map $\varphi$ is $\gr\varphi$ called the symbol of~$\varphi$. It assigns to $x \in M$ the space $\gr\varphi_x $ that has a structure of graded $\gr\f_x$-module. We say $\varphi$ is of constant symbol of type $(\g_-, \gr V, L)$ if $(\gr\f_x, \gr\varphi _x)$ is $L$-isomorphic to $ (\g_-, \gr V)$ for all $x \in M$, where $\g_-$ is a graded nilpotent Lie subalgebra of $\gr_- \lf = \bigoplus_{p <0} \gr_p \lf$.

We make the following assumption throughout:
\begin{enumerate}\itemsep=0pt
	\item[(C0)]
	There exists a filtration preserving identification of $V$ and $\gr V$ that identifies $\lf\subset \gl(V)$ and $\gr(\lf, \phi)\subset \gl(\gr V)$.
\end{enumerate}
We then work in the subcategory of constant symbol of type $(\g_-, \gr V, L)$. An algebraic aspect of this subcategory is represented by
the graded Lie algebra $\bar\g = \bigoplus \bar\g_p \subset \bigoplus \lf_p$ which is called the relative (or extrinsic) prolongation of $ \g_-$ in $\lf$ and is defined to be the maximal graded subalgebra of $\lf$ whose negative part coincides with $\g_-$.
The standard model in this category is given by
$\varphi_\mathrm{model}\colon \overline G/ \overline G^0 \to L/L^0$, where $\overline G$ and $\overline G^0$ are Lie subgroups corresponding to~$\bar \g$ and~$\bar \g^0$. The deviation of $\varphi$ from $ \varphi_\mathrm{model}$ is then measured by passing to
the corresponding extrinsic bundle $Q$ and by its structure function $\chi = \sum _{i \ge 1} \chi_i $ defined on~$Q$.

\smallskip
To go further we pose the following assumptions:
\begin{enumerate}\itemsep=0pt
\item[(C1)] There exists a $\bar G^0$-invariant graded subspace $\bar\g' \subset \lf$ such that $\lf = \bar\g \oplus \bar\g'$.
\item[(C2)] There exists a $\overline{G}^0$-invariant graded subspace $W = \bigoplus W_p \subset \phi^1\Hom(\g_{-}, \lf/\bar\g)$
such that $\phi^1\Hom(\g_{-}, \lf/\bar\g) = \partial \phi^1(\lf/\bar\g)\oplus W$.
\end{enumerate}

Here $\partial$ is the coboundary operator of the following differential complex equipped with the $\partial$-invariant induced filtration $\phi$:
\[
0 \to \lf/\bar\g \overset{\partial}\to \Hom ( \g_-, \lf/ \bar \g )\overset{\partial}\to \Hom \big( {\wedge} ^2\g_-, \lf/ \bar \g \big) \overset{\partial}\to \cdots,
\]
whose cohomology group $H(\g_-, \lf/\bar\g)$ then plays an important role.

Now our main theorem may be stated as follows:

\begin{thm0}\sloppy
 Under the assumptions $(\mathrm{C0})$, $(\mathrm{C1})$ and $(\mathrm{C2})$,
for every $L/L^0$ extrinsic bundle $(Q, \omega_Q)$
$(L/L^0$ extrinsic geometry or $L/L^0$ differential equation$)$
of constant symbol of type $(\g_-, \gr V, L)$, we can construct canonically an extrinsic bundle $(P, \omega_P)$ such that $(P, \omega_P)$ is congruent to~$(Q, \omega_Q)$ and is a reduction to the subgroup $\bar G^0$ and the structure function $\chi_P$ of $P$ takes its value in $W$.
\end{thm0}

The extrinsic bundle $(P, \omega)$ thus constructed is called $W$-normal extrinsic Cartan connection and is governed by the structure equation
\[
{\rm d}\omega +\frac12 [\omega, \omega] = 0,\qquad
\omega= \omega_I + \omega_{II},\qquad
\omega_{II} = \chi \omega_I ,
\]
where $\omega_I$, $\omega_{II}$ denote the $\bar \g$, $\bar \g '$ components of $\omega$ respectively.
It follows from the structure equation, in particular, that
\[
 \partial \chi _k = \Psi_k,
\]
where $\Psi_k$ is a differential polynomial of $\{\chi_i,\, i<k\}$ with $\Psi_1 = 0$. Thus the structure function $\chi= \sum _{i \ge 1} \chi_i$, and hence the structure of $(P, \omega)$ is uniquely determined up to the cohomology group $H^1_+ (\g_-, \lf/\bar \g) $.

Therefore as a corollary we have:

\begin{cor}[rigidity] Any $\varphi\colon (M, \f)\to L/L^0$ of constant symbol of type
$(\g_-, \gr V, L)$ is locally equivalent to
	$\varphi_\mathrm{model}$ if $(\mathrm{C0})$, $(\mathrm{C1})$ and $(\mathrm{C2})$ hold and if $H^1_+(\g_-, \lf/\bar\g ) = 0$.
\end{cor}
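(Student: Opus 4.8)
The plan is to reduce the statement to the canonical extrinsic Cartan connection furnished by the main Theorem, and then to show that the vanishing of $H^1_+(\g_-,\lf/\bar\g)$ forces its structure function to vanish identically. First I would pass from the osculating map $\varphi$ to the associated $L/L^0$ extrinsic bundle: by the categorical isomorphism of Section 3, $\varphi$ determines, up to congruence and covering, an $L/L^0$ extrinsic bundle $(Q, \omega_Q)$ of constant symbol of type $(\g_-, \gr V, L)$, and, conversely, congruent extrinsic bundles produce locally equivalent osculating maps, so it suffices to show that $(Q,\omega_Q)$ is locally congruent to the extrinsic bundle $(\overline G, \omega_{MC})$ of $\varphi_\mathrm{model}$. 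Applying the main Theorem under $(\mathrm{C0}),(\mathrm{C1}),(\mathrm{C2})$, I obtain the $W$-normal extrinsic Cartan connection $(P,\omega_P)$: it is congruent to $(Q,\omega_Q)$, it is a reduction of structure group to $\overline G^0$, it satisfies $d\omega_P+\frac12[\omega_P,\omega_P]=0$ with $\omega_P=\omega_I+\omega_{II}$ and $\omega_{II}=\chi_P\,\omega_I$, and its structure function $\chi_P=\sum_{i\ge 1}\chi_i$ takes values in $W$.

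Next I would prove $\chi_P=0$ by induction on the weight $k$. The structure equation gives $\partial\chi_k=\Psi_k$ with $\Psi_k$ a differential polynomial in $\{\chi_i: i<k\}$ and $\Psi_1=0$; since $\varphi_\mathrm{model}$ is flat, $\Psi_k$ has no constant term, so $\chi_i=0$ for all $i<k$ forces $\Psi_k=0$, the case $k=1$ being $\partial\chi_1=0$. Thus at each stage $\chi_k$ is a $\partial$-cocycle of positive weight lying in $W$. By $(\mathrm{C2})$ the space $W$ meets the coboundaries $\partial\phi^1(\lf/\bar\g)$ trivially, while $H^1_+(\g_-,\lf/\bar\g)=0$ means every positive-weight $1$-cocycle is a coboundary; hence $\chi_k\in W\cap\partial\phi^1(\lf/\bar\g)=0$. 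By induction $\chi_P\equiv 0$.

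Finally, with $\chi_P=0$ the structure equation degenerates to the Maurer--Cartan equation $d\omega_P+\frac12[\omega_P,\omega_P]=0$ for the $\bar\g$-valued $1$-form $\omega_P=\omega_I$, which is moreover a complete absolute parallelism on $P$ compatible with the $\overline G^0$-structure. By the fundamental theorem on Maurer--Cartan forms, each point of $P$ has a neighbourhood on which $(P,\omega_P)$ is isomorphic, as a $\overline G^0$-bundle with connection form, to an open piece of $(\overline G,\omega_{MC})$; that is, $(P,\omega_P)$ is locally congruent to the extrinsic bundle of $\varphi_\mathrm{model}$. Transporting this isomorphism back through the categorical equivalence of Section 3 gives that $\varphi$ is locally equivalent to $\varphi_\mathrm{model}$. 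The main obstacle is the bookkeeping at the cocycle step: one must verify that the filtration/weight degrees track correctly so that the cocycle $\chi_k$ genuinely represents a class in the \emph{positive} part $H^1_+$ (and not in weight $0$), and that $\Psi_k$ really has vanishing constant term — both of which rest on the flatness of $\varphi_\mathrm{model}$ and the normalization built into $W$ by $(\mathrm{C2})$.
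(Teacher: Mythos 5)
Your proposal is correct and follows essentially the same route as the paper: pass to the canonical $W$-normal extrinsic Cartan connection via the main Theorem, then induct on the degree using $\partial\chi_{k+1}=\Psi_{k+1}(\chi^{(k)},D\chi^{(k)})$ (which vanishes when $\chi^{(k)}=0$ by the explicit form of the right-hand side of the structure equation) together with $W\cap\partial\phi^1(\lf/\bar\g)=0$ and $H^1_+(\g_-,\lf/\bar\g)=0$ to force $\chi\equiv 0$, and finally integrate the resulting Maurer--Cartan equation to identify $(P,\omega)$ locally with the model bundle. This is precisely the argument of Proposition~\ref{prop8} combined with the Corollary to Theorem~\ref{thm:reduction}, so no further comparison is needed.
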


In Section~\ref{sec:var} we apply the above theorem to what may be called ``extrinsic parabolic geo\-metry''.
Take a simple graded Lie algebra $\g$ and an irreducible representation
$ \g \to \mathfrak{gl}(V)$, and further take a graded subalgebra $\lf = \bigoplus_{p\in \Z}\lf_p$ of $\mathfrak{gl}(V)$
containing $\g$ and let $L$ and $L^0$ be the
Lie subgroup of ${\rm GL}(V)$ corresponding to $\lf$ and $\lf^0=\bigoplus_{p\geq 0}\lf_p$.
The category of $L/L^0$ extrinsic geometries of constant symbol of type $(\g_-, \gr V, L)$ determined by the above data satisfies
the assumptions (C0), (C1) and (C2): The relative prolongation $\bar \g$ of $\g_-$ in $\lf$ proves to be $\g$ itself or~its central extension. Moreover there exist distinguished inner products on $\g$ and on $V$ such that if we define the adjoint operator $\partial^* $ on the differential complex
concerned by using the induced inner product then $\Ker \partial ^* $ will serve as $W$ required in (C2). The standard model $\varphi_\mathrm{model}$ is an~embedding of
a rational homogeneous variety into a flag variety.

Thus we have a large and rich class of $L/L^0$ categories associated with each such $(\g, \lf, \mathfrak{gl}(V))$ to which the theorem in Section~\ref{sec5} applies.

In this case the cohomology group $H^1_+(\g_-, \lf / \bar \g )$ can be computed by using root systems after Kostant's method. We have carried computation for every simple graded Lie algebra $\g$ over $\mathbb C$ and its irreducible representation $U$ to see when $ H^1_+(\g_-, U)$ is trivial, which gives a lot of rigidity theorems and a rough picture about the extrinsic parabolic geometries.

In Section~\ref{sec:general} we further study the equivalence problem of extrinsic geometry in more general setting
than in Section~\ref{sec5}.

Firstly, we treat the case where (C1) and (C2) are not necessarily satisfied. Then we can no more expect to have Cartan connections but we can still construct a series of bundles which give the complete invariants of a given osculating map $\varphi$ and the invariants are again controlled by the cohomology group $H^1_+(\g_-, \lf/ \bar \g)$. Hence we see, in particular, that
the corollary stated in~Section~\ref{sec5} holds without the assumptions (C1) and (C2).

Secondly, we generalize our theory to the case when $L$ is infinite dimensional. The main motivation is, on one hand, to study extrinsic geometries with respect to infinite-dimensional transformation group such as the general diffeomorphism group or the contact transformation group,
and, on the other hand, to study geometry of linear differential equations of infinite type.

Here we notice the remarkable similarity between intrinsic and extrinsic geometry which becomes visible and clear in this work:
Recall that the equivalence problem of intrinsic geometry reduces to that of towers.
A tower is a principal fibre bundle $P$ with structure group $G^0$ possibly infinite-dimensional
equipped with a vector valued 1-form which defines an absolute parallelism of $P$ satisfying certain natural conditions. While the equivalence problem of extrinsic geometry reduces to that of extrinsic bundles
defined in this paper. In~both intrinsic and extrinsic cases we are led to the equivalence problem of principal fibre bundles equipped with Pfaff systems.

The constructing Cartan connections make nice harmony in intrinsic and extrinsic geometry,
the condition (C) in intrinsic geometry~\cite{Mor1993} corresponding to (C2) in extrinsic geometry.

Moreover, the prolongation and reduction method of Singer--Sternberg and Tanaka for int\-ri\-nsic geometry corresponds to the reduction method in Section~\ref{sec:general} for extrinsic geometry.

It is the first cohomology group $H^1$ for extrinsic geometry
and the second $H^2$ for intrinsic geometry
that pays the key role.

Since extrinsic geometry has long history, there may be a lot of works which may relate to our work. We cite here those works that have influenced us or that we find related to the present paper: Lie~\cite{lie}, Wilczynski~\cite{wilch, wilch2}, Cartan~\cite{car1936, cartan1937}, Sulanke~\cite{sulanke}, Jensen~\cite{jensen}, Kol\'a\v{r}~\cite{kolar}, Se-Ashi~\cite{se1, se2}, Sasaki~\cite{ sasaki2, S,sasaki1}, Sasaki--Yamaguchi--Yoshida~\cite{syy},
Fels--Olver~\cite{fels-olver1999}, Hwang--Yamaguchi~\cite{hwang-yam}, Robles~\cite{robles}, Landsberg--Robles~\cite{landrob}, Doubrov--Komrakov~\cite{dk}, Doubrov--Zelenko~\cite{douzel}.

Among all we owe, in particular, to Wilczynski and Se-Ashi:
At the beginning of twentieth century after the influence of Lie, Wilczynski developed a general theory of projective curves and ruled surfaces, recognizing that these geometrics are equivalent to those of ordinary linear differential equations and certain systems of linear partial differential equations of 2nd order.

Se-Ashi reconstructed the differential invariants of ordinary linear differential equations by the method of bundles of moving frames, and found a prototype of extrinsic Cartan connections.
Then he extended this construction to systems of linear differential equations associated with a~simple graded Lie algebra of depth one and its irreducible representation.

Our preset work may be viewed as a full generalization of the works of Wilczynski and Se-Ashi to the framework of nilpotent geometry and analysis in which we find all well unified.

Even in extrinsic parabolic geometry, there are many interesting concrete examples. The~sim\-plest is the one associated with the simple Lie algebra $\mathfrak{sl}(2)$ and its irreducible representation on~$\Sym^n\big(\mathbb C^2\big)$, and this is exactly the geometry of curves in the projective space $P^n$ or that one of linear ordinary differential equations of order $n+1$ treated by Wilczynski~\cite{wilch} and Se-Ashi~\cite{se1,se2} (see also Ovsienko--Tabachnikov~\cite{O-T} for a modern exposition of Wilczynski results).

The next interesting example which is non-trivial as nilpotent geometry is the one associated with ($\mathfrak{sl}(3)$, Borel grading, adjoint representation) as shown by the works of Robles~\cite{robles} and Landsberg--Robles~\cite{landrob}. We have carried a detailed study in this case after our general method,
which will be an object of the second part of this paper.

\section{Filtered manifolds and flag varieties}\label{sec2}

\subsection{Filtered manifolds}

A \emph{filtered manifold} is a smooth manifold $M$ equipped with a tangential filtration $\f=\{\f^p\}_{p\in\Z}$ satisfying the following conditions:
\begin{enumerate}\itemsep=0pt
	\item[$(i)$] Each $\f^p$ is a subbundle of the tangent bundle $TM$ and $\f^p\supset \f^{p+1}$.
	\item[$(ii)$] $\f^0=0$ and $\f^{-\mu}=TM$ for a non-negative integer $\mu$.
	\item[$(iii)$] $[\uf^p,\uf^q]\subset\uf^{p+q}$ for $p,q\in\Z$, where $\uf^{\sbullet}$ denotes the sheaf of germs of sections of $\f^{\sbullet}$.
\end{enumerate}
The filtration $\{\f^p\}$ is occasionally written as $\{\f^pTM\}$ or as $\{T^pM\}$. The fibre of
$\f^p$ at $x\in M$ will be written as $\f^p_x$, $\f^pT_xM$, or $T^p_xM$. The filtered manifold will be denoted $(M,\f)$ or simply~$M$.

A (local) isomorphism $\varphi\colon (M,\f)\to (M',\f')$ of two filtered manifolds is a (local) diffeomorphism of the underlying smooth manifolds such that
$\varphi_*\colon (T_xM, \f_x) \to \big(T_{\varphi(x)}M', \f'_{\varphi(x)}\big)$ is an~iso\-mor\-phism of filtered vector spaces for all $x\in M$, where $\varphi$ is defined.

More generally, we can define a \emph{morphism of two filtered manifolds} $\varphi\colon (M,\f)\to (M',\f')$ as an arbitrary smooth map $\varphi\colon M\to M'$ such that $\varphi_*\colon T_x M\to T_{\varphi(x)} M'$ preserves the filtrations, that is $\varphi_*(\f_x^p)\subset \f^{\prime\,p}_{\varphi(x)}$.

Let $(M,\f)$ be a filtered manifold. For $x\in M$ we set
\[
\gr_p\f_x = \f^p_x/\f^{p+1}_x
\]
and
\[
\gr \f_x =\bigoplus_{p\in \Z} \gr_p\f_x.
\]
We see easily that $\gr\f_x$ inherits a Lie bracket induced from the bracket of vector fields, which satisfies
\[
\big[\gr_p\f_x,\gr_q\f_x\big]\subset \gr_{p+q}\f_x
\]
for all $p,q\in\Z$. Hence, $\gr\f_x$ turns to be a nilpotent graded Lie algebra, which is called \emph{the symbol algebra of $(M,\f)$ at $x$}. It should be noted that $\gr\f_x$ and $\gr\f_{x'}$ are not necessarily isomorphic as graded Lie algebras for different $x,x'\in M$. If, for all $x\in M$, the Lie algebra $\gr\f_x$ is isomorphic to a certain fixed graded Lie algebra $\m=\oplus_{p\in\Z}\m_p$, we say that $(M,\f)$ is \emph{of type $\m$}.

\begin{lem}
	Let $\varphi\colon (M,\f)\to (M',\f')$ be an immersion of two filtered manifolds. Then for each $x\in M$ the map $\varphi_*$ induces a homomorphism $\gr\f_x \to \gr\f'_{\varphi(x)}$ of graded Lie algebras. In~par\-ti\-cu\-lar, if $\varphi$ is a local diffeomorphism, then the Lie algebras $\gr\f_x$ and $\gr\f'_{\varphi(x)}$ are isomorphic.
\end{lem}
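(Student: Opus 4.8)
The plan is to build the map one graded component at a time and then verify bracket‑compatibility by reducing it to the classical fact that Lie brackets of $\varphi$‑related vector fields are again $\varphi$‑related. Throughout I use that an immersion of filtered manifolds is in particular a morphism, so $\varphi_*$ carries $\f^p_x$ into $\f'^p_{\varphi(x)}$ for every $p$ and every $x$.

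\emph{Step 1: the graded linear map.} Since $\varphi_*(\f^p_x)\subset\f'^p_{\varphi(x)}$ and $\varphi_*(\f^{p+1}_x)\subset\f'^{p+1}_{\varphi(x)}$, passing to quotients yields a well‑defined linear map
\[
\gr_p\varphi_*\colon \gr_p\f_x=\f^p_x/\f^{p+1}_x\longrightarrow \f'^p_{\varphi(x)}/\f'^{p+1}_{\varphi(x)}=\gr_p\f'_{\varphi(x)},
\]
and summing over $p$ gives a homomorphism of graded vector spaces $\gr\varphi_*\colon\gr\f_x\to\gr\f'_{\varphi(x)}$ characterised by $\gr\varphi_*(\overline{X_x})=\overline{\varphi_*X_x}$ for every $X_x\in\f^p_x$, the bar denoting the class in the associated graded.

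\emph{Step 2: bracket‑compatibility.} Fix $\bar X\in\gr_p\f_x$ and $\bar Y\in\gr_q\f_x$ and recall (as already noted in the text) that $[\bar X,\bar Y]=\overline{[X,Y]_x}$, where $X\in\uf^p$, $Y\in\uf^q$ are local sections near $x$ with $X_x,Y_x$ representing $\bar X,\bar Y$. Because $\varphi$ is an immersion it restricts to an embedding on a neighbourhood $U$ of $x$; shrinking $U$, push $X|_U,Y|_U$ forward along $\varphi|_U$ to vector fields along the submanifold $\varphi(U)$, which are sections of $\f'^p$ resp. $\f'^q$ over $\varphi(U)$ since $\varphi_*$ preserves the filtration. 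Using that $\f'^p,\f'^q$ are subbundles, extend these to local sections $X'\in\underline{\f'}^{\,p}$, $Y'\in\underline{\f'}^{\,q}$ on an open neighbourhood of $\varphi(x)$ in $M'$ that are $\varphi$‑related to $X,Y$. By the standard identity for related vector fields, $[X,Y]$ is then $\varphi$‑related to $[X',Y']$, so $\varphi_*[X,Y]_x=[X',Y']_{\varphi(x)}$; applying Step 1 and using $X'_{\varphi(x)}=\varphi_*X_x$, $Y'_{\varphi(x)}=\varphi_*Y_x$ gives
\[
\gr\varphi_*([\bar X,\bar Y])=\overline{[X',Y']_{\varphi(x)}}=[\,\overline{X'_{\varphi(x)}},\overline{Y'_{\varphi(x)}}\,]=[\gr\varphi_*\bar X,\gr\varphi_*\bar Y].
\]
Hence $\gr\varphi_*$ is a homomorphism of graded Lie algebras. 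For the last assertion: if $\varphi$ is a local diffeomorphism, a local inverse is again a morphism of filtered manifolds, so Step 1--2 applied to it produce a graded Lie algebra homomorphism $\gr(\varphi^{-1})_*$, and since on each $\f^p_x$ the maps $\varphi_*$ and $\varphi_*^{-1}$ are mutually inverse linear isomorphisms, $\gr(\varphi^{-1})_*$ is the two‑sided inverse of $\gr\varphi_*$; thus $\gr\f_x\cong\gr\f'_{\varphi(x)}$.

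The main obstacle is the construction in Step 2 of honest $\varphi$‑related local \emph{sections} $X',Y'$ of the subbundles $\f'^p,\f'^q$ near $\varphi(x)$ out of the mere pointwise inclusion $\varphi_*\f^p_y\subset\f'^p_{\varphi(y)}$. This is precisely where the immersion hypothesis is essential — for a general morphism $\varphi$ need not be locally injective and such related fields need not exist — and it requires a small amount of care to extend the pushed‑forward fields off the image submanifold while remaining inside the subbundle; once this is in hand, the bracket identity follows immediately from the related‑vector‑fields lemma.
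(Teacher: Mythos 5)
Your proof is correct and follows essentially the same route as the paper: both push the representative vector fields forward along the immersion, extend them to local sections of $\f'^p$, $\f'^q$ near $\varphi(x)$, and invoke the naturality of the Lie bracket under $\varphi$-related vector fields together with independence of the induced bracket from the choice of representatives. Your Step 2 just makes explicit the extension step that the paper states in one line, and your treatment of the local-diffeomorphism case via the inverse morphism is the standard completion of the paper's unelaborated final claim.
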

\begin{proof}
	Indeed, consider arbitrary $u\in \gr_p\f_x$, $v\in \gr_q\f_x$. Let $U\in \underline{\f^p}$, $V\in\underline {\f^q}$ such that $u=U_x\mod\f^{p+1}$ and $v=V_x\mod\f^{q+1}$.
Then $\varphi_*(U)$, $\varphi_*(V)$ are well-defined vector fields on $\varphi(M)$ in~some neighborhood of $\varphi(x)$, and, moreover, $\varphi_*([U,V])=[\varphi_*(U),\varphi_*(V)]$. Locally we can extend $\varphi_*(U)$, $\varphi_*(V)$ to vector fields $U'\in\underline{(\f')^p}$, $V'\in\underline{(\f')^q}$. Then we have $[U',V']|_{\varphi(M)} = \varphi_*([U,V])$. As the definition of the bracket operations on $\gr\f$ and $\gr\f'$ does not depend on~the choice of representatives,
we see that $\varphi_*$ indeed induces the Lie algebra homomorphisms
$\gr\f_x\to \gr\f'_{\varphi(x)}$ for all $x\in M$.
\end{proof}
\begin{rem}
	Note that the induced homomorphism of graded Lie algebras is not injective in~general, even if $\varphi_*$ is injective at each point.
\end{rem}

\begin{ex} [trivial filtered manifold] Any smooth manifold $M$ can be considered as a (trivial) filtered manifold $(M,\f)$ with $\f^0=0$ and $\f^{-1} = TM$. It is clear that
	$\gr\f_x$ is a commutative Lie algebra of dimension $\dim M$ for all $x\in M$. Hence $\gr\f_x$ is identified with $T_x M$ regarded as a~commutative Lie algebra.
\end{ex}

\begin{ex} [filtered manifold generated by a differential system $D$]
Let $M$ be an arbitrary smooth manifold, and let $D\subset TM$ be a vector distribution.
If
\[
\f^{-1}=D,\qquad
\underline{\f^{-k-1}}=\underline{\f^{-k}}+\big[\underline{\f^{-1}},\underline{\f^{-k}}\big]\qquad
\forall k>0,
\]
we say that $\f=\{ \f^p\}$ is generated by $D=\f^{-1}$,
or alternatively the vector distribution $D$ is (regularly) bracket generating.

The symbol algebra $\gr \f_x$ is also called the symbol of the distribution $D$ at a point $x\in M$. Note that in this case $\gr \f_x$ is necessarily generated by $\gr_{-1}\f_x$.
\end{ex}

\begin{ex} [homogeneous filtered manifold]
Let $G/G^0$ be a homogeneous space with a Lie group $G$ and its closed subgroup $G^0$.
Assume that the Lie algebra $\g$ of $G$ is endowed with a~filtration $\{ \g^p\}$
such that $(i)$ $[\g^p,\g^q]\subset \g^{p+q}$, $(ii)$ ${\rm Ad}(a)\g^p\subset \g^p$ for
$a\in G^0\ (p<0)$, $(iii)$ $\g^0$ is the Lie algebra of $G^0$.
Then the filtration $\{ \g^p\}$ induces a left $G$-invariant tangential filtration
$\f_{G/G^0}$ on $G/G^0$.
We see immediately that $\gr(\f_{G/G^0})_x\cong \gr_-\g \big({}=\bigoplus_{p<0}\gr_p\g\big)$
for any $x\in G/G^0$.

In particular, if $\m=\bigoplus_{p<0} \m_p$ is a nilpotent graded Lie algebra,
the simply connected Lie group $M$ corresponding to $\m$ endowed with the
left invariant tangential filtration is called the \emph{standard filtered manifold of type $\m$}.
\end{ex}

\subsection{Flag varieties}

Let $V$ be a vector space over $\R$ or $\C$, and be finite dimensional
unless otherwise stated.
By~a~fil\-t\-ration $\phi$ of $V$ we mean a series $\{ \phi ^p\} _{p \in \Z} $ of
subspaces $ \phi^p$ of $V$ satisfying $\phi^p \supset \phi^{p+1} $, that is, our filtrations are descending. A filtration $\phi$ is called \emph{saturated} if $\cup \phi^p = V$,
and \emph{fine} if $\cap \phi^p = 0$.
Unless otherwise stated, we shall always assume that all filtrations are saturated and fine.

Two filtrations $\phi_1, \phi_2$ are said to be isomorphic if
there is a linear isomorphism $f$ of $V$ such that
$f\phi_1^p = \phi_2^p $ for all $p$.

Let $\Flag(V) $ denote the flag space of all filtrations of $V$, and $\Flag(V, \phi)$ the flag variety of~type~$\phi$, that is, the set of all filtrations of V isomorphic to $\phi$. Note that the filtration $\phi$ of $V$ induces that of $\gl(V)$, denoted by the same letter $\phi$:
\[
\phi^p\gl(V) = \big\{ A \in \gl(V) \colon A\,\phi^iV \subset \phi^{i+p} V \big\},
\]
which satisfies
\[
\big[ \phi^p\gl(V), \phi^q\gl(V) \big] \subset \phi^{p+q}\gl(V).
\]
Denote also by ${\rm GL}(V)^0$ the subgroup of ${\rm GL}(V)$ stabilizing the flag $\phi$. Its Lie algebra is exactly $\phi^0\gl(V)$.

Note that the flag variety $\Flag(V,\phi)$ can be naturally identified with the homogeneous space ${\rm GL}(V)/{\rm GL}(V)^0$. Since the filtration $\{\phi^p\gl(V)\}$ is invariant with the adjoint action of ${\rm GL}(V)^0$, from Example~3 we see that $\Flag(V,\phi)$ is a homogeneous filtered manifold.

\subsection{Osculating maps}

Let $(M, \f)$ be a filtered manifold.
A smooth map $\varphi\colon (M, \f) \to \text{ Flag}(V, \phi) $ is called \emph{osculating} if
\[
\underline{\f^p} \, \underline{\varphi^q} \subset \underline{\varphi^{p+q} }\qquad
\text{for all}\quad p, q \in \Z,
\]
where $\varphi^q $ denotes the vector bundle $\cup_{x \in M} \varphi^q(x)$ over $M$ induced from $\varphi$ and $\underline{\varphi^q}$ the sheaf of sections of $\varphi^q$, and the multiplication on the left hand side of the above formula signifies the operation by differentiation along vector fields by viewing $\varphi^q\subset V\times M$. It is easy to see that this condition is equivalent to the fact that $\varphi$ is a morphism of filtered manifolds.

Let $\varphi\colon (M, \f) \to \Flag(V, \phi)$ be an osculating map. For $x \in M$, denote by $\gr\varphi(x)$ the graded vector space $\bigoplus\varphi^p(x)/\varphi^{p+1}(x)$.
Then we have the associated map
\[
\gr_p\f_x \times \gr_q\varphi(x) \to \gr_{p+q}\varphi(x).
\]
By this operation $\gr\varphi(x)$ becomes a $\gr\f_x $-module.

We say that an osculating map $\varphi\colon (M, \f) \to \Flag(V, \phi) $
is generated by $\{ \varphi ^p \}_{ p \ge p_0} $ if
$\gr\varphi(x) $ is generated by $ \bigoplus_{ p \ge p_0} \gr_p \varphi(x) $
for all $ x \in M$. In~other words, this means that the map $\varphi$ is completely determined by $\varphi^{p_0}$ as follows
\[
\underline{\varphi^p} = \sum_{i>0} \underline{\f^{-i}}\,\underline{\varphi^{p+i}}\qquad
\text{for all}\quad p<p_0.
\]
Actually, any map from a filtered manifold to a projective space or
to a Grassmann variety generically generates an osculating map to a flag variety.

We say that an osculating map $\varphi\colon (M, \f) \to \Flag(V, \phi) $ has constant symbol of type $(\g_-, \gr V) $ if there exist a nilpotent graded Lie algebra $\g_- = \bigoplus_{p < 0} \g_p$ and a graded $\g_-$-module structure on~$\gr(V, \phi)$ such that $(\gr\f_x, \gr \varphi(x))$ is isomorphic to $(\g_-, \gr V)$ as graded modules for all $x \in M$, that is, there exist an graded linear isomorphism $\alpha\colon \gr V\to \gr\varphi(x)$ and a graded Lie algebra isomorphism
$\beta\colon\geneg \to \gr\f_x$ which make the following diagram commutative:
\[
\begin{CD}
\g_- \times \gr V @>>> \gr V \\
@V{\beta \times \alpha}VV
@VV{\alpha}V \\
\gr\f_x \times \gr\varphi(x) @>>> \gr\varphi(x).
\end{CD}
\]

\section{Three categories}
\label{sec:cat}

In this section we define three categories associated to
$L/L^0 \subset \Flag(V, \phi)$, where $(V, \phi)$ is a~fil\-tered vector space, $L$ is a Lie subgroup of ${\rm GL}(V)$ and $L^0$ the subgroup of $L$ which fixes $\phi$. We denote the induced filtration of $\gl(V)$ by $\{\phi^p\gl(V)\}$ or simply by $\{\gl(V)^p\}$ and define the filtration $\{\lf^p\}$ of the Lie algebra $\lf$ of $L$ by
$\lf^p = \lf \cap \gl(V)^p$. Similarly we set $L^p = L \cap {\rm GL}(V)^p$ for~$p \ge 0$. We denote by $\gr\lf$ the associated graded Lie algebra and $\gr_-\lf=\bigoplus_{p<0}\gr_{p}\lf$.

\subsection[Category of L/L0 extrinsic geometries]
{Category of $\boldsymbol{L/L^0}$ extrinsic geometries}
\begin{df} Every object of the category of \emph{$L/L^0$ extrinsic geometries} is the osculating map
	\begin{gather*}
	\varphi \colon\ (M, \f)\to L/L^0 \subset \Flag(V,\phi).
	\end{gather*}
\end{df}

Every morphism is a pair $(f, \Lambda_a)$ of maps defined by the following commutative diagram:
\[
\begin{CD}
(M,\f) @>\varphi>> L/L^0\subset\Flag(V,\phi) \\
@VVf V @VV\Lambda_a V \\
(M',\f') @>\varphi'>> L/ L^0\subset\Flag(V,\phi),
\end{CD}
\]
where $f$ is a diffeomorphism of filtered manifolds and $\Lambda_a$ is a left translation by $a\in L$.

\begin{df}
	An $L/L^0$ extrinsic geometry is called of type $(\g_{-}, \gr V, L)$ at $x\in M$, if there exist
 a graded Lie subalgebra $\g_- \subset \gr_-\lf$,
 a graded Lie algebra isomorphism $\beta\colon \g_{-} \to \gr\f_x$ and~$a\in L$ satisfying $a\phi = \varphi_x$ which make the following diagram commutative:
\[
\begin{CD}
\g_{-}\times \gr V @>>> \gr V \\
@VV\beta\times\alpha V @VV\alpha V \\
\gr\f_x\times\gr \varphi(x) @>>> \gr\varphi(x),
\end{CD}
\]
where $\alpha=\gr a$.
It is called (of constant symbol) of type $(\g_{-}, \gr V, L)$, if it is so for all $x\in M$.
\end{df}

\subsection[Category of L/L0 differential equations]
{Category of $\boldsymbol{L/L^0}$ differential equations}
\begin{df}\label{dfn3}
	Every object of \emph{the category of $L/L^0$ $($weighted$)$ involutive systems of linear differential equations} (or simply \emph{the category of $L/L^0$ differential equations}) is an $L^0$-filtered vector bundle $(R,\{R^q\})$ with a typical fibre $(V,\{V^q\})$ over a filtered manifold $(M,\f)$ equipped with a flat $\lf$-connection $\nabla$ on $R$ satisfying
	\[
		\nabla_{\underline{\f^p}} \underline{R^q}\subset \underline{R^{p+q}}\qquad
\text{for all}\quad p,q.
	\]
	Every morphism of this category is an isomorphism $\big(R, \{R^q\}, (M,\f),\nabla\big)\! \to \! \big(R', \{R^{\prime q}\}, (M',\f),\nabla'\big)$ of $L^0$-filtered vector bundles, filtered manifolds and $\lf$-connections.
\end{df}

We mean by an $L^0$-filtered vector bundle with a typical fibre $\big(V,\{V^q\}\big)$ a vector bundle whose structure group is specified to be $L^0\subset \phi^0 {\rm GL}(V)$, namely a vector bundle endowed with a maximal class of admissible local trivializations $M\times V$ covering the bundle such that the transition function of any two admissible trivializations takes values in $L^0$.

If $\pi\colon R\to M$ is an $L^0$-vector bundle with a typical fibre $V$ and if $U\times V\ni (x,v)\mapsto \zeta(x)v\in\pi^{-1}(U)$ is an admissible local trivialization, we call $\zeta$ or $\zeta(x)$ an admissible frame of $R$. If we fix a~basis $\{e_1,e_2,\dots,e_m\}$ of $V$ and set $\zeta_i(x)= \zeta(x)e_i$, then $\{\zeta_1(x),\zeta_2(x),\dots,\zeta_m(x)\}$ is an~admissible basis of the frame $R_x$.

Recall that a connection on a vector bundle $E\to M$ is a 1-st order linear differential operator
\[
\nabla \colon\ \underline{E} \to \underline{E\otimes T^*M}
\]
satisfying
\[
\nabla(f\sigma) = f\nabla\sigma + \sigma\otimes {\rm d}f\qquad \text{for}\quad
\sigma\in\underline{E},\quad f\in \underline{M\times \R}.
\]

Now as in Definition~\ref{dfn3}, let $R$ be an $L^0$-vector bundle and $\nabla$ a connection on $R$ in the usual sense defined above, and let $L^0\subset L\subset {\rm GL}(V)$. We say that \emph{$\nabla$ is an $\lf$-connection} if
\[
	\zeta^{-1}\nabla_X\zeta \in\lf
\]
for any $L^0$-admissible frame $\zeta$ of $R$ and $X\in\underline{TM}$, where $\lf$ is a Lie algebra of $L$.

We say that the connection $\nabla $ is flat if its curvature $K$ vanishes identically, where
\[
	K(X,Y) = \nabla_X\nabla_Y - \nabla_Y\nabla_X-\nabla_{[X,Y]}.
\]

Let $\big((R, \{R^p\}), (M,\f), \nabla\big)$ be an $L/L^0$ differential equation. For each $x\in M$, we have
\[
\gr \f_x \times \gr R_x \to \gr R_x,
\]
which makes $\gr R_x$ a graded $\gr\f_x$-module called the symbol of $R$ at $x$.

\begin{df}
	An $L/L^0$ differential equation $\big(R, \{R^q\}, \nabla\big)$ over $(M,\f)$ is of type $(\g_{-}, \gr V, L)$ at $x\in M$, if there exist
a graded Lie subalgebra $\g_- \subset \gr_-\lf$, an isomorphism $\beta\colon \g_{-} \to \gr \f_x$ of~graded Lie algebras and
$z \in \Pi_x$
such that the following diagram is commutative:
	\[
	\begin{CD}
	\g_{-}\times \gr V @>>> \gr V \\
	@VV\beta\times\gr z V @VV\gr z V \\
	\gr\f_x\times\gr R_x @>>> \gr R_x.
	\end{CD}
	\]
	It is called (of constant symbol) of type $(\g_{-}, \gr V, L)$ if it is so for all $x\in M$.
\end{df}

\subsection[Category of L/L0 extrinsic bundles]{Category of $\boldsymbol{L/L^0}$ extrinsic bundles}

Let $K^0$ be an arbitrary subgroup of $L^0$, and let $\kf^0\subset \lf^0$ be the corresponding subalgebra. Note that it naturally inherits the filtration from $\lf$: $\kf^p = \lf^p \cap \kf^0$.

\begin{df}
	Every object in the category of \emph{$L/L^0$ extrinsic bundles} is a principal $K^0$-bundle $\pi\colon P\to M$ over a filtered manifold $(M, \f)$ endowed with an $\lf$-valued 1-form $\omega_P$ (or~simply written~$\omega$) such that
	\begin{enumerate}\itemsep=0pt
		\item[$(i)$] $R_a^*\om = {\rm Ad}(a)^{-1}\om$, where $R_a$, $a\in K^0$, denotes the right shift on the principal bundle,
		\item[$(ii)$] $\langle \tilde A, \omega\rangle = A$ for any fundamental vector field $\tilde A$ generated by $A\in \kf^0$,
		\item[$(iii)$] ${\rm d}\om + \tfrac{1}{2} [\om, \om] = 0$.
	\end{enumerate}
\end{df}

Every morphism of this category is an isomorphism of $L/L^0$ extrinsic bundles, that is, a~bundle isomorphism preserving $\lf$-valued 1-forms.

It should be remarked that if $(P,K^0,\om)$ is an $L/L^0$ extrinsic bundle over $(M,\f)$, then the image of
\[
\gr \bar\om_z \colon\ \gr \f_x \to \gr_{-}\lf
\]
is a graded Lie subalgebra of $\gr_{-} \lf$.

Indeed, let $u\in \gr_p\f_x$ and $v\in \gr_q\f_x$. Take $X\in \f^p_x$, $Y\in \f^q_x$ such that $X_x\equiv u$, $Y_x\equiv v$, and let~$\tilde X$, $\tilde Y$ be lifts of $X$, $Y$ to $P$ respectively. Then
\begin{align*}
\big[\gr \bar\om_z(u), \gr \bar\om_z(v)\big] &\equiv \big[\om_z\big(\tilde X\big), \om_z\big(\tilde Y\big)\big]
\!= -{\rm d}\om\big(\big[\tilde X, \tilde Y\big]\big) \!= -\tilde X \om\big(\tilde Y\big) \!+\tilde Y \om \big(\tilde X\big)
\!+\om\big(\big[\tilde X,\tilde Y\big]\big)
\\
&\equiv \om_z\big(\big[\tilde X,\tilde Y\big]\big) \equiv \om_z\big(\widetilde{[X,Y]}\big) \equiv \gr \bar\om_z\big([u,v]\big).
\end{align*}

We say that $\big(P,K^0,\om\big)$ is (\emph{of constant symbol}) \emph{of type
	$\big(\g_-, \, L/L^0\big)$} if for any $x\in M$ there exists $z\in\pi^{-1}(x)$ such that $\gr\bar\om_z (\gr\f_x)=\g_{-}$ for a graded Lie subalgebra $\g_{-}\subset \gr_{-}\lf$.

\subsection{Congruence classes}

Let us define the congruence classes in each three categories in order to state rigorously the functorial isomorphisms that we shall prove in the next section.

We say two $L/L^0$ extrinsic geometries $\varphi_1\colon (M_1, \f_1)\to L/L^0$ and $\varphi_2 \colon (M_2, \f_2)\to L/L^0 $
are \emph{congruent by a congruence} $\left(\id_M, \Lambda_a\right)$
if $M_1 = M_2 = M$ and if there exists an isomorphism $\left(\id_M, \Lambda_a\right)$
from $\varphi_1$ to $\varphi_2$ of $L/L^0$ extrinsic geometries.

We say also that two isomorphisms of extrinsic geometries
\begin{gather*}
	(f, \Lambda_a)\colon\ \ \,\big[\varphi_1\colon (M_1, \f_1)\to L/L^0 \big]	\to
	\big[\varphi_2 \colon (M_2, \f_2)\to L/L^0 \big],
\\
	(f', \Lambda_{a'})\colon\ \big[\varphi_1\colon (M_1, \f_1)\to L/L^0 \big]	\to
	\big[\varphi_2 \colon (M_2, \f_2)\to L/L^0 \big]
\end{gather*}
are congruent if there exist congruences
$\left(\id_{M_1}, \Lambda_{b_1}\right)$ and $\left(\id_{M_2}, \Lambda_{b_2}\right)$
such that
\[
 \left(\id_{M_2}, \Lambda_{b_2}\right) \circ (f, \Lambda_a) = \left(f', \Lambda_{a'})\circ (\id_{M_1}, \Lambda_{b_1}\right)\!.
\]

Then we have a category whose objects are the congruent classes of extrinsic geometries and the morphisms the congruence classes of isomorphisms.

In accordance with the above definition, we use the terminology: ``cong\-ruence'' and ``cong\-ruent'' also for (vector or fibre) bundles to mean by a congruence an isomorphism of bundles which covers the identity map of the base spaces. Then we have the category of congruence classes of $L/L^0$ differential equations and that of congruence classes of $L/L^0$ extrinsic bundles. But for $L/L^0$ extrinsic bundles we introduce another notion of congruence in a wider sense, called \emph{holo-congruent}.

Before that let us make some observation on a certain generalization of connection.

Let $G$ be a Lie group and $K$ a Lie subgroup with Lie algebra $\g$ and $\mathfrak k$ respectively. Let $Q \to M$ be a principal $K$-bundle over $M$. A $\g$-valued 1-form $\omega_Q$ (or simply $\omega$) on $Q$ will be called a~$\g$-con\-nection (form) if it satisfies:
\begin{enumerate}\itemsep=0pt
	\item[$(i)$] $R_a^* \omega = {\rm Ad}(a)^{-1} \omega$ for $a \in K$,
	\item[$(ii)$] $\langle \omega, \tilde A \rangle = A$ for $A \in \mathfrak k$.
\end{enumerate}

Cartan connections in intrinsic geometry and $L/L^0$ extrinsic bundles are examples of $\g$-connections.

If $\omega$ is a $\g$-connection on $Q$ and if we set
\[
{\rm d}\omega + \frac12 [\omega, \omega] = \gamma,
\]
then
\begin{gather*}
	i_{\tilde A}\gamma = 0, \\
	R_a^* \gamma = {\rm Ad}(a)^{-1} \gamma.
\end{gather*}
Therefore $\gamma$ is be viewed as a map
\[
\wedge ^2 ( TQ/VQ) \to \g ,
\]
where $VQ$ denotes the vertical tangent bundle of $Q$, and is called the curvature of $\omega$. If~$\gamma = 0$, we say that $\omega$ is flat.

\begin{prop}
	Let $ G \supset H \supset K \supset L$ be a descending sequence of Lie subgroups and $Q \to M$ a principal $K$-bundle equipped with a $\g$-connection $\omega_Q$. Then
\begin{enumerate}\itemsep=0pt
\item[$(1)$] If $R$ is principal $L$-bundle over $M$ and $\iota\colon R \to Q$ is an injection of principal bundles cove\-ring $\id_M$, then $ \omega_R( = \iota^*\omega_Q)$ is a $\g$-connection on $R$, and $\omega_Q$ is flat if and only if so is~$\omega_R$.
	
\item[$(2)$] There exist, uniquely up to congruences, a principal $H$-bundle $P \to M$, a $\g$-connection on $\omega_P$ on $P$, and an injection $\iota\colon Q \to P$ of principal bundles covering $\id_M$ such that $\iota^* \omega_P = \omega_Q$.
\end{enumerate}
\end{prop}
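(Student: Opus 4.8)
The plan is to treat the two parts separately, with part (2) being essentially an associated-bundle (extension of structure group) construction and part (1) a restriction. For part (1), given the injection $\iota\colon R\to Q$ covering $\id_M$, I would set $\omega_R=\iota^*\omega_Q$ and verify the two axioms of a $\g$-connection by naturality: condition (i) follows because $\iota$ is $L$-equivariant (it intertwines the $L$-action on $R$ with the restriction of the $K$-action on $Q$), so $R_a^*\iota^*\omega_Q=\iota^*R_a^*\omega_Q=\iota^*\Ad(a)^{-1}\omega_Q=\Ad(a)^{-1}\omega_R$ for $a\in L$; condition (ii) follows because for $A\in\mathfrak l$ the fundamental vector field $\tilde A$ on $R$ is $\iota$-related to the fundamental vector field $\tilde A$ on $Q$, so $\langle\omega_R,\tilde A\rangle=\langle\iota^*\omega_Q,\tilde A\rangle=\langle\omega_Q,\tilde A\rangle\circ\iota=A$. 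For the curvature statement, pull back the defining relation: $d\omega_R+\tfrac12[\omega_R,\omega_R]=\iota^*(d\omega_Q+\tfrac12[\omega_Q,\omega_Q])=\iota^*\gamma_Q$. Since $\iota$ covers $\id_M$ and is a surjection on the quotients $TR/VR\to TQ/VQ$ (both are identified with $TM$ pulled back), $\iota^*\gamma_Q$ vanishes if and only if $\gamma_Q$ does; this gives the equivalence of flatness.

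For part (2), the construction is the standard extension of structure group: put $P=Q\times_K H$, the quotient of $Q\times H$ by the $K$-action $(z,h)\cdot k=(zk,k^{-1}h)$, which is a principal $H$-bundle over $M$, and let $\iota\colon Q\to P$ be $z\mapsto[z,e]$, a $K$-equivariant injection covering $\id_M$. The form $\omega_P$ is then the unique $\g$-valued $1$-form on $P$ satisfying $\iota^*\omega_P=\omega_Q$ together with the $H$-equivariance $R_h^*\omega_P=\Ad(h)^{-1}\omega_P$; concretely, on the open dense image of $\iota\cdot H$ one is forced to set $\omega_P$ on $[z,h]$ by transporting $\omega_Q$ at $z$ via $\Ad(h)^{-1}$ and adding the Maurer--Cartan form in the $H$-direction, and one checks this is well defined under the $K$-identification precisely because $\omega_Q$ already satisfies (i) and (ii) for $K$ and $\mathfrak k\subset\g$. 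Axiom (ii) for $\omega_P$ (that $\langle\omega_P,\tilde A\rangle=A$ for $A\in\mathfrak h$) then holds by the Maurer--Cartan contribution, and (i) holds by construction. Uniqueness up to congruence: if $(P',\omega_{P'},\iota')$ is another such triple, the map $P=Q\times_K H\to P'$ sending $[z,h]\mapsto\iota'(z)h$ is a well-defined $H$-bundle isomorphism covering $\id_M$ (well-definedness again using $K$-equivariance of $\iota'$), and it carries $\omega_P$ to $\omega_{P'}$ because both restrict to $\omega_Q$ on $Q$ and are $H$-equivariant, which pins down a $\g$-connection on an $H$-bundle uniquely once its restriction to a reduction is fixed.

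I expect the main obstacle to be the well-definedness and smoothness of $\omega_P$ under the $K$-quotient in part (2): one must check that the two recipes for $\omega_P$ at $[zk,k^{-1}h]$ and at $[z,h]$ agree, which is exactly where axioms (i) and (ii) for $\omega_Q$ relative to $K$ are consumed, and that the resulting form varies smoothly across $P$ (not just on the image of $\iota\cdot H$, though that image is all of $P$ since $Q\cdot H$ surjects onto $Q\times_K H$). A small secondary point is that the claim as stated does not assert anything about curvature in part (2) — but it is worth noting, and follows from $\iota^*(d\omega_P+\tfrac12[\omega_P,\omega_P])=d\omega_Q+\tfrac12[\omega_Q,\omega_Q]$ together with $H$-equivariance, that $\omega_P$ is flat iff $\omega_Q$ is, so the two parts fit together consistently when later applied to extrinsic bundles (where flatness, i.e.\ axiom (iii), is imposed).
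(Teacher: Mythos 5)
Your proposal is correct and follows essentially the same route as the paper: part (2) is built from the fibre product $P=Q\times_K H$ with the form $\Ad(h)^{-1}\omega_Q+\Omega_H$ on $Q\times H$ descended to $P$, which is exactly the paper's construction, and your curvature remark reproduces the paper's displayed identity $d\omega_{Q\times H}+\tfrac12[\omega_{Q\times H},\omega_{Q\times H}]=\Ad(h)^{-1}(d\omega_Q+\tfrac12[\omega_Q,\omega_Q])$. The paper only sketches the verifications (``it is not difficult to verify''), so your filling in of the well-definedness under the $K$-quotient and the uniqueness argument is a faithful elaboration rather than a departure.
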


\begin{proof} Let us indicate how to construct $(P, \omega_P)$.
First we define $P$ to be the fibre product
$Q\times _K H $. We then define a $\g$-valued 1-form $\omega_{Q\times H}$
on $Q\times H$ by
\[
 ( \omega_{Q \times H} ) _{(q, h)} = {\rm Ad}(h) ^{-1} (\omega_Q)_q + (\Omega _H )_h \qquad \text{for}\quad (q, h) \in Q\times H,
\]
where $\Omega_H$ denotes the Maurer--Cartan form of $H$. It is not difficult to verify that there exists a~unique $\g$-valued 1 form
$\omega_P$ on $P$ such that its pull-back to $Q\times H$ coincides with
$\omega_{Q\times H}$. Then it is easy to prove the other statements of the proposition.

We remark that a simple computation yields
\[
{\rm d}\omega_{Q\times H} +\frac12 [ \omega_{Q\times H}, \omega_{Q\times H} ]
={\rm Ad}(h)^{-1} \bigg({\rm d}\omega_Q +\frac12[\omega_Q, \omega_Q]\bigg),
\]
which tells how to compute the curvature by taking a local trivialization of $Q$ and also gives an~alternative proof of the second half of (1).
\end{proof}

Now let us return to the category of $L/L^0$ extrinsic bundles.
By the above proposition we see that for an $L/L^0$ extrinsic bundle $\left(Q, K^0, M, \omega_Q\right)$ there is, uniquely up to congruences, an~extension of $Q$ to the group
$L^0$ which we denote $\left(\bar Q, L^0, M, \omega_{\bar Q}\right)$.

We say that two $L/ L^0$ extrinsic bundles $Q$ and $Q'$ are
\emph{holo-isomorphic} (resp.\ \emph{holo-congruent}) if their extensions to $L^0$, $\bar Q$ and $\bar Q'$ are isomorphic (resp.\ congruent).

Then the holo-conguruence classes of $L/L^0$ extrinsic bundles
forms a category, of which the morphisms are the congruence classes of
holo-isomorhisms.\vspace{-1.5mm}

\section{Categorical isomorphisms}
\label{sec4}

In this section we show that the categories of congruence classes of $L/L^0$ extrinsic geometries, congruence classes of $L/L^0$ differential equations, and holo-congruence classes of $L/L^0$ extrinsic bundles are categorically equivalent. The passage to the $L/L^0$ extrinsic geometries needs the topological assumption of simply connectedness. Once recognized we will not be strict for the distinction between $X$ and the (holo-) congruence class of $X$, if there is no fear of confusion.
\vspace{-1mm}

\subsection{From an extrinsic geometry to an extrinsic bundle}\vspace{-.5mm}

Let $\varphi \colon (M,\f)\to L/L^0\subset \Flag(V,\phi)$ be an $L/L^0$ extrinsic geometry. Let $\varphi^*L$ be the induced principal fibre bundle over $M$. Then its structure group is $L^0$ and there is a canonical embedding $\iota\colon \varphi^*L\to L$. Let $\om$ be the pull back $\iota^*\om_L$, where $\om_L$ is the Maurer--Cartan form of $L$. Then $(\varphi^*L,L^0,\om)$ is an $L/L^0$ extrinsic bundle over $(M,\f)$.
\vspace{-.5mm}

\subsection{From an extrinsic bundle to an extrinsic geometry}

Let $\left(P,K^0,\om_P\right)$ be an extrinsic bundle. We then consider the Pfaff system on $P\times L$ defined by\vspace{-.5mm}
\[
\Omega = \om_L-\om_P = 0.\tag{$\Sigma$}
\]
Since we have\vspace{-.5mm}
\[
{\rm d}\Omega = -\frac12\left[\Omega, \om_L + \om_P\right],
\]
the Pfaff system $(\Sigma)$ is completely integrable.

Therefore for each $(z,a)\in P\times L$ there is a unique maximal integral manifold, which defines a fibre preserving map from a neighborhood of $z$ to a neighbourhood of~$a$. It then defines an~embedding of a neighborhood of the projection $\pi(z) \in M$ to $L/L^0$. Note that any such two embeddings differ by a left translation of an element of $L$.
Therefore
if $M$ is simply connected, the maximal integral manifold passing $(z_0, a_0) $ determines an osculating map
$\varphi\colon M \to L/L^0$, which gives the functorial isomorphism.

\subsection{From a differential equation to an extrinsic bundle}
	Let $\big(R, \{R^p\}, (M,\f), \nabla\big)$ be an $L/L^0$ differential equation. Then define $P$ to be the set of admissible frames of the $L^0$-vector bundle $R$. We see that $P$ is an $L^0$-principal fibre bundle over $M$, and $R$ can be identified with $P\times_{L^0} V$.
	
	Let $\Pi$ be the extension of $P$ to $L$: $\Pi = P\times_{L^0} L$. Then the $\lf$-connection $\nabla$ of $R$ gives rise to a connection on $\Pi$ in the following way.
	
A connection on $\Pi$ is usually defined by a right invariant distribution $H$ of horizontal subspaces, or by a connection form $\omega$, which is an $\lf$-valued 1-form on $\Pi$ satisfying
	\begin{enumerate}\itemsep=0pt
		\item[$(i)$] $R_a^*\omega ={\rm Ad}(a)^{-1}\omega$, for all $a\in L$,
		\item[$(ii)$] $\langle \omega, \tilde{A}\rangle=0$, for all $A\in \lf$.
	\end{enumerate}

	The connection $H$ associated to $\nabla$ is defined as follows. Let $\mathring{z}\in \Pi$ and $\mathring{x}\in M$ its projection. For any smooth curve $\xi(t)\in M$ with $\xi(0)=\mathring{x}$ there is a locally unique horizontal lift $\zeta(v)(t)$ to~$R$ such that $\zeta(v)(0)=\mathring{z}(v)$ for $v\in V$. Then $\zeta(t)$ is a lift of $\xi(t)$ to $\Pi$. Tangent vectors to all such lifts define a horizontal subspace $H_{\mathring{z}}\subset T_{\mathring{z}}\Pi$. The connection form $\omega$ is uniquely defined by~$\ker \omega=H$.
	
Then the following properties are standard:
\begin{prop}\label{stProp}\quad
\begin{enumerate}\itemsep=0pt
\item[$(1)$] $\nabla \zeta = \zeta (\zeta^*\omega)$ for any local section $\zeta$ of $\Pi$,
\item[$(2)$] for local sections $\zeta$ and $\sigma$ of $\Pi$ and $R$ respectively if we define $\bar\eta$ by $\sigma = \zeta\bar\eta$, then
\[
\nabla \sigma = \zeta \left({\rm d}\bar\eta +\bar\omega\bar\eta\right)\!,\qquad\text{where} \quad\bar\omega=\zeta^*\omega,
\]
\item[$(3)$] $i_{\tilde A}\big({\rm d}\omega + \tfrac{1}{2}[\omega,\omega]\big)=0$ for all $A\in\lf$,
\item[$(4)$] $K = {\rm d}\omega + \tfrac{1}{2}[\omega,\omega]$.
\end{enumerate}
\end{prop}
\begin{proof}
	Since (1) and (2) are basic, we give a short proof. Let$\mathring{x}\in M$, $\mathring{z}=\zeta(\mathring{x})$, $\xi\in T_{\mathring{x}}M$ and~$x(t)$ be a smooth curve in $M$ such that $x(0)=\mathring{x}$, $\dot x(0)=\xi$. Let $z^H(t)$ be its horizontal lift satisfying $z^H(0)=\mathring{z}$. Then writing
\[
\zeta(t) = \zeta(x(t)) = z^H(t) a(t)\qquad\text{with}\quad a(t)\in L,\quad a(0)=e,
\]
we have
\begin{gather*}
\zeta^{-1}\nabla_{\xi}\zeta =\zeta^{-1}\nabla_{\xi}(z^Ha)
= \zeta^{-1}\big((\nabla_\xi z^H)a(0)+\zeta(0)a(0)^{-1}\dot a(0)\big)=\dot a(0),
\\
\langle \zeta^*\omega, \xi\rangle = \langle \omega, \dot\zeta(0)\rangle
= \bigg\langle \omega, \dot z^H(0)a(0)+\frac{\rm d}{{\rm d}t}|_{t=0}\big(z^H(0)a(t)\big)\bigg\rangle = \dot a(0).
\end{gather*}
Hence (1) is verified.
	
	Now (2) immediately follows from (1):
	\[
		\nabla \sigma = \nabla (\zeta\bar\eta) = (\nabla\zeta) \bar\eta +\zeta {\rm d}\bar\eta = \zeta({\rm d}\bar\eta+\bar\omega\bar\eta).
	\]
	
	Note that on account of (3) $K={\rm d}\omega+\tfrac{1}{2}[\omega,\omega]$ may be regarded as a section of $\Hom\big({\wedge}^2 TM,\lf\big)$. Then (4) follows easily from (2).
	\end{proof}
	
	On the bundle $\pi\colon \Pi\to (M,\f)$ there is an induced filtration $\f^p_{\Pi}$ defined by
	\begin{gather*}
		\f^p_{\Pi} = \pi^*\f^p,\qquad p<0, \\
		\f^p_{\Pi} = \widetilde{(\phi^p\lf)},\qquad p\ge 0.
	\end{gather*}
	
	The properties of Proposition~\ref{stProp} imply
	\begin{prop}\label{propEquiv}
		Let $\nabla$ be a flat $\lf$-connection on $R$ and let $(\Pi,\omega)$ be the associated principal $L$-bundle and connection. The following two conditions are equivalent:
		\begin{enumerate}\itemsep=0pt
		\item[$(1)$] $\nabla_{\underline{\f^p}} \underline{R^q}\subset \underline{R^{p+q}}$ for all $p,q$,
			\item[$(2)$] $\omega_z\colon T_z\Pi\to \lf$ is filtration preserving for any $z\in \Pi$.
		\end{enumerate}
	\end{prop}
	
	If $\iota\colon P\to\Pi$ is the canonical inclusion, then it is easy to see that $(P, \iota^*\omega)$ in an $L/L^0$-extrinsic bundle.

\subsection{From an extrinsic bundle to a differential equation}
Let $\big(P,K^0,\om\big)$ be an $L/L^0$ extrinsic bundle over a filtered manifold $(M,\f)$. Then define a filtered vector bundle $(R,\{R^q\})$ by $R^q=P\times_{K^0} V^q$, which can be regarded as $L^0$-filtered vector bundle by group extension. The $\lf$-valued 1-form $\omega$ then defines an $\lf$-connection on $R$. It is now clear that $\left(R,\{R^q\},\nabla\right)$ is an $L/L^0$-differential equation.

It should be noted that the $L/L^0$ differential equation corresponding to the $L/L^0$ extrinsic bundle $\big(P,K^0,\omega\big)$ is written succinctly as
\[
{\rm d}\eta + \omega\cdot\eta = 0.\tag{\amgiS}
\]
Here by $\cdot$ we mean the action of the Lie algebra $\lf$ on $V$.

This is a first order linear differential equation for unknown $V$-valued function $\eta$ on $P$. Since~$\omega$ is flat, this equation is integrable. In~fact, if we view $(\amgiS)$ as a Pfaff equation on $P\times V$ by regarding $\eta$ as the standard coordinates of $V$ and put
\[
\Theta = {\rm d}\eta + \omega\cdot\eta,
\]
then we have
\[
{\rm d}\Theta = {\rm d}\om\cdot \eta - \om\wedge {\rm d}\eta = -\frac12[\om,\om]\cdot\eta -\om\wedge (\Theta -\om\cdot\eta) = -\om\wedge \Theta.
\]
Hence $(\amgiS)$ is completely integrable. Therefore for any $(\mathring{z},\mathring{v})\in P\times V$ there exists locally a unique solution $\eta$ of $(\amgiS)$ such that $\eta(\mathring{z}) =\mathring{v}$.

Moreover, since
\[
\tilde A \eta = -A\cdot \eta\qquad\text{for}\quad A\in \kf^0,
\]
we have
\[
\eta\big(z\exp(tA)\big) = \left(\exp(tA)^{-1}\right)\eta(z).
\]
Therefore $\eta$ can be regarded as a local section $\sigma$ of the associated bundle $R=P\times_{K^0} V$. Then we see immediately from Proposition~\ref{stProp}(2) that a solution $\eta$ of $(\amgiS)$ corresponds bijectively to a section $\sigma$ of $\R$ satisfying $\nabla\sigma = 0$.

\subsection{From an extrinsic geometry to a differential equation}

Let $\varphi\colon (M,\f) \to L/L^0 \subset \mathrm{Flag}(V,\phi)$
be an $L/L^0$ extrinsic geometry.
Then, taking $R^q:=\cup_{x\in M}\varphi^q(x)$ $(\varphi^q(x)\subset V)$,
$R:=M\times V$
and $\nabla_{\f^p}\underline{R^q}:=\underline{\f^p} \underline{R^q}$, that is, $\nabla={\rm d}$ for $V$-valued functions on $M$, we have an $L/L^0$ differential equation $\big(R,\{ R^p\},\nabla\big)$ on $(M,\f)$.

\subsection{From a differential equation to an extrinsic geometry}

Let $\big(R, \{R^p\}, \nabla\big)$ be an $L/L^0$ differential equation on $(M, \f )$. One way to construct the corresponding
$L/L^0$ extrinsic geometry is to take an extrinsic bundle $\big(P, K^0, \omega_P\big)$ and integrate
$\omega_P $ to obtain a map $g\colon P \to L$.
Here we like to give another construction.
We assume the base space $M$ is simply connected.
Let $\Sol(\nabla)$ denote the space of all solutions of
\[
	\nabla s = 0 .
\]
Since there exists a unique solution $s$ such that $s(x_0) = v_0$ for arbitrary given $x_0 \in M, \ v_0 \in R_{x_0}$,
we note that $\Sol(\nabla ) \cong R_{x_0}$ by evaluation and
$R_{x_0} \cong V$ by any $z_0 \in \Pi$ over $x_0$.

Now define a map
\[
\varphi\colon\ (M, \f) \to \Flag(\Sol(\nabla))
\]
by
\[
	\varphi^p(x) = \big\{ s \in \Sol(\nabla) \colon s(x) \in R^p_x \big\}.
\]
Then this map $\varphi$ is exactly the corresponding extrinsic geometry.

To say in the language of extrinsic bundles, let $\Sol(\amgiS)$ denote the solution space of $(\amgiS)$, which is isomorphic to $V$ by an evaluation
\[
\Sol(\amgiS) \ni f \mapsto f(z_0) \in V \qquad z_0 \in P.
\]

Now we define
\[
	\Psi\colon\ P \to \Flag(\Sol( \amgiS))
\]
by
\[
	\Psi^p(z) = \big\{ f \in \Sol(\amgiS) \colon f(z) \in V^q \big\},
\]
which in turn induces a map
\[
	\psi\colon\ (M, \f) \to \Flag(\Sol(\amgiS)).
\]
Then we see easily that $\varphi$ and $\psi$ are congruent,
if we regard them as maps to $\Flag(V, \phi)$
by any admissible isomorphisms $\alpha\colon\Sol(\nabla) \to V$, $\beta\colon \Sol(\amgiS) \to V$ specified above.

We remark here that, if we obtain a map
$g\colon P \to L$ such that $g^* \omega_L = \omega_P$,
then we have immediately
\[
	\Sol(\amgiS) = \big\{ g^{-1}v \colon v \in V \big\},
\]
which implies that integration of $\omega_P$ solves $(\amgiS)$.

\subsection[L/L0 differential equations as linear differential equations in weighted jet bundles]
{$\boldsymbol{L/L^0}$ differential equations as linear differential equations \\in weighted jet bundles}

Let us show that an $L/L^0$ differential equation
$(R, \{R^p\}, \nabla)$ can be realized as an involutive system of linear differential equations of
finite type defined in a weighted jet bundle (see~\cite{Mor2002} for weighted jet bundles).

Let $R^{(\nu)}$ denote the quotient bundle $R/R^{\nu+1}$, which is a filtered vector bundle over the filtered manifold $(M,\f)$, and let $\wJ^{k}R^{(\nu)}$ be the weighted jet bundle of weighted jet order $k\ge 0$ associated to $R^{(\nu)}$.

\begin{prop}\label{rprop3}
	There are canonical morphisms
	\[
	j \colon\ R^{(k)} \to \wJ^{k} R^{(\nu)}, \qquad\text{for any}\quad k,\nu,
	\]
	therefore $ R^{(k)}$ may be regarded as a system of linear differential equations
	in $\wJ^{k} R^{(\nu)}$.
\end{prop}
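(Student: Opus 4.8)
The plan is to build the jet comparison map directly from the connection $\nabla$, exploiting that $\nabla$-flatness lets us prolong sections canonically. First I would recall the structure of the weighted jet bundle: a point of $\wJ^{(k)}R^{(\nu)}$ over $x\in M$ is a weighted $k$-jet at $x$ of a (local) section of $R^{(\nu)}=R/R^{\nu+1}$, i.e.\ an equivalence class of sections agreeing with a given one up to weighted order $k$. The natural source of such jets is as follows: given $r\in R^{(k)}_x$, since $\nabla$ is flat there is (locally, choosing a simply connected neighborhood, or just formally to the required finite order) a unique $\nabla$-parallel section $s$ of $R$ through any lift of $r$; reducing $s$ modulo $R^{\nu+1}$ and taking its weighted $k$-jet at $x$ gives an element of $\wJ^{(k)}R^{(\nu)}$. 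The subtlety is that $r\in R^{(k)}_x=R_x/R^{k+1}_x$ only determines $s$ modulo $R^{k+1}$, so I must check that the weighted $k$-jet of $s\bmod R^{\nu+1}$ depends only on $r$, not on the chosen lift.

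The key step is precisely this well-definedness, and I expect it to be the main obstacle. Here one uses condition iv)/the compatibility $\nabla_{\underline{\f^p}}\underline{R^q}\subset\underline{R^{p+q}}$ from the definition of an $L/L^0$ differential equation: differentiating a parallel section $s$ along a vector field $X\in\underline{\f^{-i}}$ satisfies $\nabla_X s=0$, so the Taylor coefficients of $s$ in weighted jet coordinates are governed by the connection, and an ambiguity in $s$ of weighted order $\ge k+1$ (i.e.\ valued in $R^{k+1}$) produces, after at most the relevant number of covariant derivatives and reduction mod $R^{\nu+1}$, a contribution of weighted jet order $\ge k+1$, hence zero in $\wJ^{(k)}$. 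Concretely, I would phrase this as: the map $R^{(k)}\to\wJ^{(k)}R^{(\nu)}$ factors the prolongation, and the filtration estimate $\nabla_{\f^p}R^q\subset R^{p+q}$ is exactly what makes the weighted order bookkeeping close up. This is where one needs the ``weighted'' refinement of jets (cf.\ \cite{Mor2002}): ordinary jet order would not suffice because the filtration on $\f$ is nontrivial.

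Next I would verify that $j$ is a morphism of filtered vector bundles: linearity is clear since $\nabla$-parallel transport and jet-taking are linear; the filtration on $R^{(k)}$ (induced from $\{R^p\}$) maps into the natural filtration on $\wJ^{(k)}R^{(\nu)}$ because a section through $R^p_x$ stays, after covariant differentiation along $\f^q$, inside $R^{p+q}$. Finally, to justify the phrase ``$R^{(k)}$ may be regarded as a system of linear differential equations in $\wJ^{(k)}R^{(\nu)}$'', I would observe that $j$ is injective (a parallel section vanishing to weighted order $k$ together with its class data can be recovered, or more simply: the composite $R^{(k)}\to\wJ^{(k)}R^{(\nu)}\to R^{(\nu)}$ for $\nu\le k$ is the canonical projection, so $j$ is a section-like embedding identifying $R^{(k)}$ with a subbundle of $\wJ^{(k)}R^{(\nu)}$), and that its image is cut out by a linear system; that the system is of finite type when $\dim V<\infty$ is immediate since all bundles in sight are finite-rank, and involutivity in the weighted sense follows from flatness of $\nabla$ by the same computation ($d\Theta=-\omega\wedge\Theta$) already used above in the analysis of $(\amgiS)$. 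I would keep the naturality statement ``for any $k,\nu$'' honest by noting the obvious compatibility of the $j$'s with the projections $R^{(k')}\to R^{(k)}$ and $\wJ^{(k')}R^{(\nu)}\to\wJ^{(k)}R^{(\nu)}$.
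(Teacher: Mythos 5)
Your proposal follows essentially the same route as the paper: define $j$ by extending $v^{(k)}\in R^{(k)}_x$ to a $\nabla$-parallel section, project to $R^{(\nu)}$ and take the weighted $k$-jet, with the whole burden falling on well-definedness, which is discharged exactly as you indicate by the filtration estimate $\nabla_{\underline{\f^p}}\underline{R^q}\subset\underline{R^{p+q}}$ (the paper makes the bookkeeping explicit in a trivialization, writing $X\underline u=\tilde\omega(X)\underline u$ and using $\tilde\omega^j_i(X_p)=0$ for $i<j+p$ to show the weighted $k$-jet coordinates depend only on the components $\underline u_j(x)$ with $j$ at most the relevant weighted order). One caveat: your parenthetical argument for injectivity of $j$ is both unnecessary for the stated proposition and incorrect as written --- the composite $R^{(k)}\to\wJ^{(k)}R^{(\nu)}\to R^{(\nu)}$ being the canonical projection proves nothing since that projection has a kernel for $\nu<k$; injectivity is the content of the \emph{next} proposition and genuinely requires the hypothesis $H^0_i(\gr\f,\gr R)=0$ for $i>\nu$.
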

\begin{proof}
	The map $j$ is defined as follows. Let $v^{(k)}\in R^{(k)}_x$. Take a section $\underline v\in \underline R_x$ such that
	\[
	\nabla \underline v = 0\qquad\text{and}\qquad \underline v^{(k)}(x) = v^{(k)},
	\]
	where $\underline{v}^{(k)}$ denotes the projection of $\underline{v}$ into $R^{(k)}$. We then define
	\[
	j\big(v^{(k)}\big) = j_x^{k}\big(\underline v^{(\nu)}\big),
	\]
	where $j_x^{k}$ denotes the weighted $k$-th jet at $x$.
	
	Let us show that this definition does not depend on the choice of $\underline v$ and is well-defined. For~that we are going to verify that if $v^{(k)}=0$, then $j_x^{(k)}v^{(\nu)}=0$.
	
	Take a local cross-section $\zeta$ of $\Pi$ in a neighborhood of $x$. Then to the section, $\underline v$ corresponds a $V$-valued function $\underline u$ in a neighborhood of $x$ determined by
	\[
	\underline v(y) = \big(\zeta(y), u(y)\big).
	\]
	Taking complementary subspaces $V_p$ to $V^{p+1}$ in $V^p$, we can write
	\[
	V = \oplus V_p\qquad\text{and}\qquad \underline u = \sum \underline u_p
	\]
	with $\underline u_p$ a $V_p$-valued function.

	Now recall that the coordinates of $j^k_x\underline v^{(\nu)}$ are represented by
\[
\left(X_{p_1}\cdots X_{p_\tau}\underline u_j\right)_x\qquad\text{for}\quad X_{p_i}\in \underline{\f^{p_i}},\quad j\le \nu
\]
with $j-\sum p_i\le k$.
	
We claim that all these values vanish, if $v^{(k)}=0$. Let us verify it in a special case $\tau=2$, the other cases being similar.
	
	Since $\nabla \underline v =0 $, we have, by Proposition~\ref{stProp}(2),
	\[
	X \underline u + \langle \zeta^*\omega, X\rangle \underline u = 0.
	\]
If we write $\tilde\omega = -\zeta^*\omega$, we have
	\[
	X\underline{u} = \tilde\omega(X)\underline u = \sum\tilde\omega^j_i(X)\underline u_j,
	\]
	where $\tilde\omega^j_i(X)$ denotes $\Hom(V_j,V_i)$ component.
	
Now we have
\begin{align*}
	X_{p_1}X_{p_2}\underline u &= X_{p_1} \big( \tilde\omega(X_{p_2})\underline u\big)
	= \big( X_{p_1} \tilde\omega(X_{p_2})\big)\underline u
	+ \omega(X_{p_2})X_{p_1}\underline u
\\
	&= \big( X_{p_1} \tilde\omega(X_{p_2})\big)\underline u
	+ \tilde\omega(X_{p_2})\tilde\omega(X_{p_1})\underline u.
\end{align*}
Hence we have
\[
	X_{p_1}X_{p_2}\underline u_i = \sum_j \big(X_{p_1}\tilde\omega^j_i(X_{p_2})\big)\underline u_j
	+ \sum_{j,l} \tilde\omega^j_i(X_{p_2})\tilde\omega^l_j(X_{p_1})\underline u_l.
\]
Note here that by Proposition~\ref{propEquiv} we have
\[
	\tilde\omega^j_i(X_p) = 0 \qquad\text{for}\quad X_p\in\underline{\f^p},\qquad\text{if}\quad i<j+p.
\]
Therefore
\[
X_{p_1}X_{p_2}\underline u_i = \sum_{j\le i-p_2}\big(X_{p_1}\tilde\omega^j_i(X_{p_2})\big)
\underline u_j
+ \sum_{j\le i-p_2} \tilde\omega^j_i(X_{p_2})\sum_{l\le j-p_1}\tilde\omega^l_j(X_{p_1})\underline u_l.
\]
But the values
\begin{gather*}
\underline u_j(x) \qquad\text{for}\quad j\le i-p_2\le k,
\\
\underline u_l(x) \qquad\text{for}\quad l\le j-p_1\le i-p_1-p_2\le k
\end{gather*}
vanish. Hence
\[
	(X_{p_1}X_{p_2}\underline u_i)_x \qquad\text{for}\quad i-p_1-p_2\le k
\]
vanish, which completes the proof of Proposition~\ref{rprop3}.
\end{proof}

It should be remarked that the morphism $j$ is actually defined without ``integration''.
Though we have used integration to find $\underline v$ such that $\nabla \underline v = 0$, it is not $\underline v$, but $v^{(k)}$ that plays the actual role to define $j$ as shown in the proof.

Let $H^k(\gr\f,\gr R)$ be the degree $k$ cohomology group of the $\gr\f$ module $\gr R$. As both $\gr\f$ and $\gr R$ are graded, the cohomology group also naturally inherits the grading
\[
	H^k(\gr\f,\gr R) = \sum_i H^k_i(\gr\f,\gr R).
\]

\begin{prop}%\label{rprop4}
	If $H^0_i(\gr \f, \gr R)=0$ for $i>\nu$, then the map $j\colon R^{(k)}\to \wJ^{k}R^{(\nu)}$ is injective for $k\ge\nu$.
\end{prop}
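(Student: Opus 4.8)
The plan is to prove injectivity of $j\colon R^{(k)}\to \wJ^{(k)}R^{(\nu)}$ by induction on $k\ge\nu$, using the filtration on $R^{(k)}$ and reducing the inductive step to the vanishing hypothesis $H^0_i(\gr\f,\gr R)=0$ for $i>\nu$. First I would make precise the graded data at a point $x\in M$: since the symbol of $R$ at $x$ is the graded $\gr\f_x$-module $\gr R_x$, and the degree-zero cohomology $H^0(\gr\f_x,\gr R_x)$ is by definition the submodule of $\gr R_x$ annihilated by all of $\gr_-\f_x$, the hypothesis says that every homogeneous invariant vector of $\gr R_x$ sits in degree $\le\nu$. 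Geometrically this means that a flat section $\underline v$ with $\nabla\underline v=0$ whose value lies in $R^{i}_x$ for some $i>\nu$ is forced, upon differentiation along $\f^{-j}$, to ``drop degree'': modulo lower filtration its $\gr_i$-component cannot be $\gr_-\f_x$-invariant unless it vanishes.

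The key mechanism is the one already exhibited in the proof of Proposition~\ref{rprop3}: for a flat section, with a local cross-section $\zeta$ of $\Pi$ and $\tilde\omega=-\zeta^*\omega$, one has $X_p\underline u=\tilde\omega(X_p)\underline u$, and by Proposition~\ref{rprop2} the operator $\tilde\omega(X_p)$ for $X_p\in\underline{\f^p}$ shifts the $V$-grading by $\ge -p$ (strictly, $\tilde\omega^j_i(X_p)=0$ unless $i\ge j+p$). Thus iterated derivatives $X_{p_1}\cdots X_{p_\tau}\underline u$ evaluated at $x$, modulo contributions of $\underline u_l(x)$ with $l$ in lower filtration degree, are governed entirely by the symbol: the leading part is precisely the action of $\gr\f_x$ on $\gr R_x$ applied to the class of $v^{(k)}$. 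Concretely, I would argue: suppose $j(v^{(k)})=0$, i.e.\ $j_x^{(k)}(\underline v^{(\nu)})=0$; I want $v^{(k)}=0$ in $R^{(k)}_x=R_x/R^{k+1}_x$. Pick the largest $i\le k$ with the $\gr_i$-class $\bar v_i\in\gr_i R_x$ of $v^{(k)}$ nonzero (if $i\le\nu$ this is detected already in $R^{(\nu)}$, hence in the $0$-jet, contradiction; so assume $i>\nu$). The vanishing of all weighted derivatives $(X_{p_1}\cdots X_{p_\tau}\underline u_m)_x$ with $m\le\nu$, $m-\sum p_t\le k$, combined with the filtration-shift property above and an induction on $\tau$ purging the lower-degree terms $\underline u_l(x)$, yields that $\bar v_i$ is annihilated by every product $\gr_{p_1}\cdots\gr_{p_\tau}\f_x$ landing it in degree $\le\nu$; in particular, since $i>\nu$, already the single action of each $\gr_p\f_x$ on $\bar v_i$ followed by projection to degrees $\le\nu$ must vanish, and iterating, $\bar v_i$ generates (under $\gr_-\f_x$) a submodule meeting degrees $\le\nu$ only in $0$. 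But the part of that submodule in degree $\le\nu$ is detected by the $\nu$-jet of a flat representative, which we assumed zero; hence $\bar v_i$ itself is $\gr_-\f_x$-invariant, i.e.\ $\bar v_i\in H^0_i(\gr\f_x,\gr R_x)=0$, contradicting $\bar v_i\ne0$. Therefore $v^{(k)}=0$.

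The main obstacle, and the step I would be most careful with, is the bookkeeping that lets one conclude ``$\bar v_i$ is invariant'' from ``its flat prolongation has vanishing $\nu$-jet'': one must show that the higher-degree components $\underline u_l(x)$ with $l>\nu$ which are \emph{not} directly constrained by the $\nu$-jet are nonetheless irrelevant, because each time such a component is produced it comes multiplied by a $\tilde\omega$-factor that shifts it down, and after enough steps it re-enters degrees $\le\nu$ where it is killed. This is exactly the degree-dropping argument in Proposition~\ref{rprop3}, but run in the opposite direction — there one showed $v^{(k)}=0\Rightarrow j_x^{(k)}v^{(\nu)}=0$; here I need the converse, and the converse is where the cohomological hypothesis $H^0_{>\nu}=0$ does the work, since without it an invariant vector in high degree would be an honest obstruction to injectivity. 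Once this is set up, the induction on $k$ is routine: the case $k=\nu$ is immediate because $R^{(\nu)}$ injects into its own $0$-jet, and the inductive step only adds the single new graded piece $\gr_k R_x$, handled by the argument above with $i=k$.
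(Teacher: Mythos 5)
Your route is genuinely different from the paper's. The paper proves this by a diagram chase: it places $R^{(k)}\to \wJ^{(k)}R^{(\nu)}$ in a commutative ladder whose columns are the short exact sequences $0\to\gr_{k+1}R\to R^{(k+1)}\to R^{(k)}\to 0$ and $0\to\Hom(U(\gr\f),\gr R^{(\nu)})_{k+1}\to \wJ^{(k+1)}R^{(\nu)}\to\wJ^{(k)}R^{(\nu)}\to 0$, notes that the bottom map is injective for $k=\nu$ and that the top (purely algebraic, symbol-level) map is injective by the hypothesis on $H^0$, and concludes by induction. Your proposal replaces the top row by an explicit computation with flat sections and the connection form $\tilde\omega$, i.e.\ you rerun the calculation of Proposition~3 in reverse. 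That is a workable plan, and your outer induction on $k$ is exactly what makes it work: after invoking the inductive hypothesis, only the single graded piece $\gr_k R_x$ survives, so the error terms $\underline u_l(x)$ with $l<k$ in the jet equations all vanish and the leading symbol term is isolated. Be aware that your ``pick the largest $i$ with $\bar v_i\ne 0$'' device does \emph{not} work as a standalone argument outside that induction: for $\nu<l<i$ the components $v_l$ are not controlled by the $\nu$-jet, so the lower-order terms in $(X_{p_1}\cdots X_{p_\tau}\underline u_m)_x$ cannot be purged and you cannot isolate the symbol action on $\bar v_i$.

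There is one deduction that is a non sequitur as written: from ``the $\gr_{-}\f_x$-submodule generated by $\bar v_i$ meets degrees $\le\nu$ only in $0$'' you conclude ``hence $\bar v_i$ itself is $\gr_{-}\f_x$-invariant, i.e.\ $\bar v_i\in H^0_i$.'' That implication is false in general: the submodule may a priori contain nonzero homogeneous elements in the intermediate degrees $d$ with $\nu<d<i$ (for instance $X\bar v_i\ne 0$ for some $X\in\gr_{-1}\f_x$), and nothing in the jet data kills them directly. What is true, and what you need, is the following standard argument using the hypothesis in \emph{all} degrees above $\nu$, not only in degree $i$: if the submodule $N=U(\gr_{-}\f_x)\bar v_i$ is nonzero, take a nonzero homogeneous element $w\in N$ of minimal degree $d$; then $d>\nu$ because $N$ vanishes in degrees $\le\nu$, and $Xw=0$ for every $X\in\gr_{-}\f_x$ by minimality of $d$, so $w\in H^0_d(\gr\f_x,\gr R_x)=0$, a contradiction; hence $N=0$ and in particular $\bar v_i=0$. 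With this replacement (equivalently, a downward induction on the degree through the range $\nu<d\le i$) your argument closes; it is the same lemma that the paper compresses into the phrase ``the first row is injective for $k\ge\nu$ by assumption.''
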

\begin{proof}
	Consider the following commutative diagram (see~\cite{Mor2002}):
	\[
	\begin{CD}
	0 @. 0 \\
	@VVV @VVV \\
	\gr_{k+1}R @>>> \Hom(U(\gr \f),\gr R^{(\nu)})_{k+1}\\
	@VVV @VVV \\
	R^{(k+1)} @>>> \wJ^{k+1}R^{(\nu)}\\
	@VVV @VVV \\
	R^{(k)} @>>> \wJ^{k}R^{(\nu)}\\
	@VVV @VVV \\
	0 @. 0
	\end{CD}
	\]
	The third row is injective for $k=\nu$ and the first row is injective for $k\ge \nu$ by assumption. Then by induction we deduce that the second row is injective for $k\ge \nu$, since the columns are~exact.
\end{proof}

\begin{prop}\label{rprop5}
	Let $k_1$ and $k_2$ be integers such that $\nu\le k_1\le k_2$, $H^1_j(\gr \f, \gr R) = 0$ for $j>k_1$ and $\gr_l R=0$ for $l>k_2$.
	
	For $k\ge\nu$, consider $R^{(k)}$ as systems of differential equations embedded in $\wJ^{k}R^{(\nu)}$. Then they satisfy
	\begin{enumerate}\itemsep=0pt
		\item[$(1)$] $\Prol^{(l)} R^{(k)} \subset R^{(l)}$ for $l\ge k\ge \nu$,
		\item[$(2)$] $\Prol^{(l)} R^{(k)} = R^{(l)}$ for $l\ge k\ge k_1$,
		\item[$(3)$] $R\to \dots \to R^{(k+1)}\to R^{(k)}$ are all isomorphisms for $k\ge k_2$ and are involutive.
	\end{enumerate}
\end{prop}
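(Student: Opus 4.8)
The plan is to exploit the commutative diagram from the proof of Proposition~\ref{rprop4} systematically, reading off each of the three assertions from the behaviour of the graded pieces $\gr_l R$ and the cohomology groups $H^k_j(\gr\f,\gr R)$. The underlying principle throughout is that $R^{(k)}$ is already an involutive system in the sense of \cite{Mor2002}, precisely because it comes from the flat connection $\nabla$; so the only work is bookkeeping with the weights.

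First I would establish (1). The inclusion $\Prol^{(l)}R^{(k)}\subset R^{(l)}$ for $l\ge k\ge\nu$ is essentially tautological: a weighted $l$-jet of a (formal) solution of the embedded system $R^{(k)}\subset\wJ^{(k)}R^{(\nu)}$ is, by the very construction of $j$ in Proposition~\ref{rprop3}, the $l$-jet of a section $\underline v$ with $\nabla\underline v=0$ truncated modulo $R^{\nu+1}$; but the proof of Proposition~\ref{rprop3} shows that such a jet is controlled by $v^{(l)}\in R^{(l)}_x$, since it is $v^{(l)}$, not $\underline v$ itself, that enters the formulas. Concretely, I would argue that $\Prol^{(l)}R^{(k)}$ is the image of $R^{(l)}$ under the natural projection composed with $j$, using that $\nabla$ lets one reconstruct all higher derivatives of $\underline v$ from $v^{(l)}$ and $\omega$; the containment in $R^{(l)}$ is then immediate.

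For (2) I would run the induction already sketched for Proposition~\ref{rprop4}, but now tracking $H^1$ instead of $H^0$. Starting from the diagram
\[
\begin{CD}
\gr_{k+1}R @>>> \Hom(U(\gr\f),\gr R^{(\nu)})_{k+1}\\
@VVV @VVV\\
R^{(k+1)} @>>> \wJ^{k+1}R^{(\nu)}\\
@VVV @VVV\\
R^{(k)} @>>> \wJ^{k}R^{(\nu)}
\end{CD}
\]
with exact columns, surjectivity of $\Prol^{(k+1)}R^{(k)}\to R^{(k)}$ onto the whole of $R^{(k)}$ is obstructed exactly by the cokernel of the top horizontal arrow, which is measured by $H^1_{k+1}(\gr\f,\gr R)$; when $k\ge k_1$ this group vanishes by hypothesis, so one step of prolongation recovers $R^{(k+1)}$ from $R^{(k)}$, and induction on $l$ gives $\Prol^{(l)}R^{(k)}=R^{(l)}$ for all $l\ge k\ge k_1$. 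Here I would be careful to combine this with (1), so that the equality, not merely a surjection onto a subspace, is obtained.

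Finally for (3): when $k\ge k_2$ we have $\gr_lR=0$ for $l>k_2\ge k$, so each $\gr_{k+1}R=0$ and the left column of the diagram forces $R^{(k+1)}\to R^{(k)}$ to be an isomorphism; composing, $R\to\cdots\to R^{(k+1)}\to R^{(k)}$ are all isomorphisms for $k\ge k_2$. Involutivity then follows from (2): for $k\ge k_2\ge k_1$ the system $R^{(k)}$ equals all its prolongations (it is stabilized), and vanishing of $H^1_j$ for $j>k_1$ together with $\gr_lR=0$ for $l>k_2$ means the relevant Spencer/$\partial$-cohomology obstruction to involutivity in the sense of \cite{Mor2002} is zero in every positive degree. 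I expect the main obstacle to be the precise matching of the weighted-jet formalism of \cite{Mor2002} with the cohomological conditions — i.e.\ verifying that ``involutive'' there is characterized by exactly the vanishing of $H^1_j(\gr\f,\gr R)$ in the degrees in play — rather than any of the diagram chases, which are routine once that dictionary is in hand.
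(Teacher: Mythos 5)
The paper gives no actual proof of this proposition --- it only says the assertions ``can be easily verified by standard arguments in the theory of weighted jet bundles'' --- so the question is whether your plan would really supply those arguments. Parts (2) and (3) are in the right spirit: $H^1_j(\gr\f,\gr R)$ is indeed the group that detects new generators of the equation appearing at weight $j$, and $\gr_lR=0$ for $l>k_2$ forces $R^{(l+1)}\to R^{(l)}$ to be an isomorphism by the exact columns of the diagram. But two of your identifications are off. First, $H^1_{k+1}$ is not ``the cokernel of the top horizontal arrow'': the map $\gr_{k+1}R\to\Hom(U(\gr\f),\gr R^{(\nu)})_{k+1}$ is injective with an enormous cokernel; what $H^1_{k+1}$ measures is the quotient of the prolonged symbol $\ker\bigl(\Prol^{(k+1)}R^{(k)}\to R^{(k)}\bigr)$ by $\gr_{k+1}R$. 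Second, the issue in (2) is not surjectivity of $\Prol^{(k+1)}R^{(k)}\to R^{(k)}$, which is automatic because $\Prol^{(k+1)}R^{(k)}$ contains $R^{(k+1)}$ and the latter already surjects onto $R^{(k)}$; the issue is whether the prolongation is no \emph{larger} than $R^{(k+1)}$.

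The genuine gap is in (1). You claim the inclusion is tautological because an element of $\Prol^{(l)}R^{(k)}$ ``is, by the very construction of $j$, the $l$-jet of a section $\underline v$ with $\nabla\underline v=0$,'' and you go on to assert that $\Prol^{(l)}R^{(k)}$ equals the image of $R^{(l)}$. That is circular: an element of $\Prol^{(l)}R^{(k)}$ is by definition only a \emph{formal} solution of the equations of weighted order $\le k$ together with their formal derivatives, and deciding whether such a jet comes from a genuine parallel section is exactly the content of (2), where the hypothesis $H^1_j(\gr\f,\gr R)=0$ for $j>k$ does the work. If your argument for (1) were sound it would yield $\Prol^{(l)}R^{(k)}=R^{(l)}$ for every $k\ge\nu$, rendering (2) and the integer $k_1$ pointless. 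The paper's own Example~\ref{ex4new} shows this cannot be: for $y''=0$ one has $\nu=0$, $k_1=2$, and $R^{(1)}$ is all of $\wJ^{(1)}R^{(0)}$, hence is cut out by no equations, so $\Prol^{(2)}R^{(1)}=\wJ^{(2)}R^{(0)}$, which strictly contains $R^{(2)}$. (This also shows that the inclusion in (1), read literally, cannot hold for $\nu\le k<k_1$; the inclusion that really is tautological for all $k\ge\nu$ is $R^{(l)}\subset\Prol^{(l)}R^{(k)}$, i.e.\ jets of genuine parallel sections formally satisfy all prolonged equations.) Your proof of (1) should establish that easy inclusion and nothing more, and the reverse inclusion should be derived in (2) from the vanishing of $H^1_j$ for $j>k\ge k_1$.
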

\begin{proof}
	For the subbundle $R^{(k)}\subset \wJ^{k}R^{(\nu)}$ the prolongation $\Prol^{(l)}R^{(k)}\subset \wJ^{l}R^{(\nu)}$ is defined for~$l\ge k$ (for the definition see~\cite{Mor2002}). All assertions in Proposition~\ref{rprop5} can be easily verified by~standard arguments in the theory of weighted jet bundles.
\end{proof}

Existence of such integers $\nu$, $k_1$, $k_2$ is theoretically obvious in our finite-dimensional case. Such minimal integers are of interest for various concrete examples.

\subsection{Dual embeddings and differential equations}
Let $V$ be a vector space and $V^*$ its dual space. For a decreasing filtration $\phi=\{\phi^p\}$ we define the dual of $\phi$ to be the filtration $\phi^*$ of $V^*$ defined by
\[
\phi^{*p} = \big(\phi^{-p+1}\big)^\perp.
\]
Then the pairing
\[
\gr_p\phi \times \gr_q\phi^* \to \R
\]
is non-degenerate if $p+q=0$ and vanishes otherwise. Therefore
\[
(\gr\phi)^* \cong \gr\phi^*.
\]

For a map $\varphi\colon (M,\f) \to \Flag(V,\phi)$ we define its dual
$\varphi^*\colon (M,\f)\to \Flag(V^*,\phi^*)$ by $\varphi^*(x) = \varphi(x)^*$ for $x\in M$.

\begin{prop}\label{dual}\
	\begin{enumerate}\itemsep=0pt
		\item[$1.$] The map $\varphi$ is osculating if and only if so is $\varphi^*$.
	\end{enumerate}
 Assume that $\varphi$ is osculating. Then
 \begin{enumerate}\itemsep=0pt
 	\item[$2.$] $(\gr \varphi(x))^* \cong \gr\varphi^*(x)$ and
 	\[
 	 \langle X\alpha,v\rangle + \langle \alpha, Xv\rangle=0\qquad\text{for}\quad X\in\gr\f_x,\quad
 \alpha\in \gr\varphi^*(x),\quad v\in\gr\varphi(x).
 	\]
 	\item[$3.$] $H^0_i\left(\gr\f_x, \gr\varphi(x)\right)=0$ for $i>\lambda$ if and only if $\gr\varphi^*(x)$ is generated by $\oplus_{j\ge -\lambda} \gr_j\varphi^*(x)$.
 \end{enumerate}
\end{prop}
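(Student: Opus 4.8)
The plan is to prove the three assertions in order, exploiting the non-degenerate pairing $\gr_p\phi \times \gr_q\phi^* \to \R$ (for $p+q=0$) that was established just before the statement.

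\textbf{Part (1).} First I would unwind the osculating condition. Recall that $\varphi$ is osculating iff $\underline{\f^p}\,\underline{\varphi^q}\subset \underline{\varphi^{p+q}}$ for all $p,q$. For a vector field $X\in\underline{\f^p}$ and sections $v\in\underline{\varphi^q}$, $\alpha\in\underline{\varphi^{*r}}$, differentiate the scalar function $\langle\alpha,v\rangle$: since $\varphi^{*r}(x)=(\varphi^{r^*}(x))^\perp$ with $r^*=-r+1$, we have $\langle\alpha,v\rangle\equiv 0$ whenever $q\ge r^*$, i.e. $q+r\ge 1$. Hence $X\langle\alpha,v\rangle = \langle X\alpha,v\rangle+\langle\alpha,Xv\rangle=0$ identically when $q+r\ge 1$. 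Assuming $\varphi$ osculating, $Xv\in\underline{\varphi^{p+q}}$, so $\langle\alpha,Xv\rangle=0$ when $r+(p+q)\ge 1$; combined with the identity this forces $\langle X\alpha,v\rangle=0$ for all $v\in\underline{\varphi^q}$ with $q\ge 1-p-r$, which says precisely $X\alpha\in(\varphi^{(1-p-r)})^\perp=\varphi^{*(p+r)}$. That is $\underline{\f^p}\,\underline{\varphi^{*r}}\subset\underline{\varphi^{*(p+r)}}$, so $\varphi^*$ is osculating. The converse is symmetric since $\phi^{**}=\phi$ and $\varphi^{**}=\varphi$.

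\textbf{Part (2).} The isomorphism $(\gr\varphi(x))^*\cong\gr\varphi^*(x)$ is the fibrewise version of $(\gr\phi)^*\cong\gr\phi^*$ already recorded; explicitly the pairing $\gr_p\varphi(x)\times\gr_{-p}\varphi^*(x)\to\R$ induced from $V\times V^*\to\R$ is non-degenerate, by exactly the same linear-algebra argument used for $\phi$. The compatibility identity $\langle X\alpha,v\rangle+\langle\alpha,Xv\rangle=0$ for $X\in\gr_p\f_x$, $\alpha\in\gr_{-q+p}\varphi^*(x)$, $v\in\gr_q\varphi(x)$ (the only bidegree in which both terms can be nonzero) is the passage to associated graded of the Leibniz identity $X\langle\alpha,v\rangle=\langle X\alpha,v\rangle+\langle\alpha,Xv\rangle$ established in the proof of (1): one lifts $X,v,\alpha$ to representatives in the filtration, uses that $\langle\alpha,v\rangle$ is a function whose value at $x$ depends only on the graded classes when $p\ne 0$ (and is locally constant, hence killed by $X$, in the relevant degree), and reads off the identity on $\gr$.

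\textbf{Part (3).} This is the main point and the expected obstacle. I would identify $H^0(\geneg,\gr\varphi(x))$ and $H^1$ of the dual module via the standard duality for Lie algebra (co)homology: for a finite-dimensional graded module $N$ over the graded nilpotent Lie algebra $\g_-$, there is a natural pairing $H^k(\g_-,N)\times H_k(\g_-,N^*)\to\R$, and $H_0(\g_-,N^*)=N^*/\g_-N^*$ is the space of coinvariants. Using the grading and the sign-reversal in $\phi^*$ (the shift $p^*=-p+1$), $H^0_i(\gr\f_x,\gr\varphi(x))$ pairs with $H_{0}$ in complementary degree of $\gr\varphi^*(x)$, i.e. with $(\gr\varphi^*(x)/\geneg\cdot\gr\varphi^*(x))$ in degree $-i$. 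Now $H^0_i=0$ for all $i>\lambda$ says $\gr_j\varphi(x)$ has no invariant vectors above degree $\lambda$; dually this says the degree-$(-i)$ part of the coinvariants of $\gr\varphi^*(x)$ vanishes for $i>\lambda$, i.e. $\gr_{<-\lambda}\varphi^*(x)$ is hit by the action of $\g_-$ from higher (less negative) degrees — which is exactly the statement that $\gr\varphi^*(x)$ is generated as a $\g_-$-module by $\bigoplus_{j\ge-\lambda}\gr_j\varphi^*(x)$. The care needed is purely bookkeeping of the degree shift: I would track it by writing out $H^0(\g_-,N)=\Hom_{\g_-}(\R,N)=\{n\in N:\g_-n=0\}$, $H_0(\g_-,N^*)=N^*\otimes_{\g_-}\R=N^*/\g_-N^*$, noting $(N/\g_-N)^*\cong\{n^*\in N^*:\g_- n^*=0\}$ is false in general but $(N^{\g_-})^*\cong N^*/\g_-N^*$ holds by dualizing the exact sequence $\g_-\otimes N\to N\to N^{\g_-}\!{}^{?}$ — so I would instead dualize $\bigoplus\g_{-i}\otimes N\xrightarrow{\ \partial\ }N\to H^0\to 0$ read in the cochain complex, getting $0\to (H^0)^*\to N^*\to\bigoplus(\g_{-i})^*\otimes N^*$, whose cokernel's vanishing in the appropriate degrees is the generation statement. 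Matching the degree on $(\g_{-i})^*\cong\gr_i\phi$-type factors with the shift in $\phi^*$ pins down that "$i>\lambda$" corresponds to "generated in degrees $\ge-\lambda$", completing the proof.
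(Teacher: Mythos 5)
Your proposal is correct; the paper itself omits the proof as ``straightforward,'' and your argument (the Leibniz identity $X\langle\alpha,v\rangle=\langle X\alpha,v\rangle+\langle\alpha,Xv\rangle$ applied to the vanishing pairings for part (1) and (2), and the duality $(H^0(\g_-,N))^*\cong N^*/\g_- N^*$ with the degree shift $p^*=-p+1$ for part (3)) is exactly the intended one. The only slip is your parenthetical claim that $(N/\g_-N)^*\cong (N^*)^{\g_-}$ ``is false in general'' --- for a finite-dimensional module and its contragredient this is in fact always true, dual to the isomorphism you do use --- but this does not affect the argument, since you correctly dualize $0\to H^0\to N\to\Hom(\g_-,N)$ instead.
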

The proof is straightforward and is omitted.

It should be remarked that if $\varphi\colon (M,\f)\to L/L^0\subset \Flag(V,\phi)$ is a~$L/L^0$-extrinsic geometry, then $\varphi^*$ is also an $L/L^0$-extrinsic geometry with respect to the dual embedding $L/L^0\subset \Flag(V^*,\phi^*)$.

We also note that the duality in $L/ L^0$ extrinsic geometries naturally extends to that in $L/L^0$ differential equations.

\subsection{Examples}
In the examples below we illustrate the relation between differential equations, the corresponding osculating embeddings and their dual embeddings. Throughout these examples we denote by $V$ (or $V^{k+1}$) a vector space of dimension $k+1$, and by $V^*$ its dual. By taking a basis $\{e_0,e_1,\dots,e_k\}$ of $V$ and its dual basis $\{e^0,e^1,\dots,e^k\}$ of $V^*$ we make often the identifications
\begin{gather*}
V\ni v =\sum v^i e_i \leftrightarrow \begin{pmatrix} v^0 \\ v^1 \\ \vdots \\v^k \end{pmatrix} \in \R^{k+1},
\\
V^*\ni\alpha = \sum\alpha_i e^i \leftrightarrow (\alpha_0,\alpha_1,\dots,\alpha_k)\in \R^{k+1,*}.
\end{gather*}
We write $M$ or $M^n$ to denote a filtered manifold of dimension $n$ endowed with the trivial filtration unless otherwise mentioned.

\begin{ex}\label{ex4new} Consider a curve $[\theta]\colon M^1\to P\big(V^{k+1,*}\big)$ represented by a smooth curve $\theta\colon M^1\to V^{k+1,*}\backslash \{0\}$. Assume that $[\theta]$ is regularly generating $V^*$, that is there exists an osculating map $\varphi^*\colon M\to \Flag(V^*,\phi^*)$ such that $\varphi^{*0}(x) = \langle \theta(x) \rangle\supset \varphi^{*1}(x)=0$ for all $x\in M$ and that $\varphi^*$ is generated by $\varphi^{*0}$. Note that $\varphi^*$ is uniquely determined by $[\theta]$ up to a shift of filtration. In~this case this assumption is equivalent to assuming that $\big\{\theta(x), \theta'(x),\dots, \theta^{(k)}(x)\big\}$ are linearly independent and therefore span $V^*$ for every $x\in M$.

Then there exist smooth functions $p_0,p_1,\dots,p_k$ on $M$ such that
\[
\theta^{(k+1)}(x) = p_0(x)\theta^{(k)}(x) + \cdots + p_k(x)\theta(x),\qquad \text{for all}\quad x\in M.
\]
This means that if we write $\theta=(\theta_0,\theta_1,\dots,\theta_k)$ then $\{\theta_0,\theta_1,\dots,\theta_k\}$ forms a fundamental system of solutions of the following linear ordinary differential equation $(\cD)$ of order $k+1$:
\begin{equation*}
y^{(k+1)} = p_0 y^{(k)} + p_1y^{(k-1)} + \dots + p_k y.\tag{$\cD$}
\end{equation*}

Let $\varphi\colon M\to\Flag(V,\phi)$ be the dual of $\varphi^*$, i.e., $\varphi = (\varphi^*)^*$. Then by Proposition~\ref{dual} the map~$\varphi$ is osculating and $H^0_{+}(\gr \f_x,\gr\varphi(x))=0$. Hence, viewing $L={\rm GL}(V)$, $L^0=\phi^0{\rm GL}(V)$, we~see that the $L/L^0$-differential equation $\cR=\big(\{\cR^q\},\nabla\big)$ corresponding to $\varphi$ is defined in the jet bundle $J^{k+1}(\cR^{(0)})$ by Proposition~\ref{rprop5}.

Let us observe that the differential equation~$\cR$ is nothing but the ODE~$(\cD)$. Indeed, define $\psi \colon M\to \Flag(\Sol(\cD))$ by
\begin{gather*}
\Sol(\cD) = \left\{s_v = \sum_{i=0}^k v^i\theta_i\,\left|\,v=\begin{pmatrix} v^0 \\[-1mm] \vdots \\ v^k \end{pmatrix} \in \R^{k+1}\right.\right\},
\\[.5ex]
\psi^p(x) = \left\{s_v\in \Sol(\cD)\mid s^{(i)}_v(x)=0,\text{ that is }
\big\langle \theta^{(i)},v \big\rangle = 0 \text{ for all } i<p \right\}.
\end{gather*}
Clearly $\psi$ is isomorphic to $\varphi$. But the differential equation corresponding to $\psi$ is~$(\cD)$, so that the systems $(\cD)$ and $\cR$ are equivalent.

On the other hand, we know that the differential equation $\cR$ is equivalent to the Pfaff equation
\[
{\rm d}\eta +\omega\eta = 0,\tag{\amgiS}
\]
where $\omega$ is the pull-back of the Maurer--Cartan form $\Omega_{{\rm GL}(V)}$ to the induced bundle $\varphi^*{\rm GL}(V)$:
\[
\begin{CD}
	\varphi^* {\rm GL}(V) @>>> {\rm GL}(V) \\
	@VVV @VVV \\
	M @>>> {\rm GL}(V)/\phi^0{\rm GL}(V).
\end{CD}
\]
Let us see how the equation~$(\amgiS)$ looks like in our case.

Let $\{\Theta_0,\Theta_1,\dots,\Theta_k\}$ be a moving frame of $V^*$ defined by $\Theta_p=\theta^{(k-p)}$, $p=0,1,\dots,k$ and let $\big\{\Phi^0,\Phi^1,\dots,\Phi^k\big\}$ be the moving frame on of $V$ dual to $\{\Theta_0,\Theta_1,\dots,\Theta_k\}$.

Regarding $\Theta_q$ as a row vector and $\Phi^p$ as a column vector, we set
\[
\Theta = \begin{pmatrix} \Theta_0 \\ \Theta_1 \\ \vdots \\ \Theta_k \end{pmatrix}\!,\qquad
\Phi = \big(\Phi^0,\Phi^1,\dots,\Phi^k\big),
\]
and we get ${\rm GL}(V)$-valued functions $\Theta$, $\Phi$ on $M$ satisfying
\[
\Theta\Phi = E_{k+1} \qquad\text{(the identity matrix of degree $k+1$)}.
\]
It is clear that $\Phi$ gives a cross-section of the bundle $\varphi^*{\rm GL}(V)\to M$. Thus, the equation~$(\amgiS)$ reduces to
\[
{\rm d}\eta + \Phi^{-1}\,{\rm d}\Phi\,\eta = 0,
\]
which then becomes
\[
{\rm d}\eta - {\rm d}\Theta\,\Theta^{-1}\,\eta = 0.
\]
But by the construction of $\Theta$ we see that
\[
{\rm d}\Theta\,\Theta^{-1} =
\begin{pmatrix} p_0 & p_1 & p_2 & \dots & p_{k-1} & p_k \\
1 & 0 & 0 & \dots & 0 & 0\\
0 & 1 & 0 & \dots & 0 & 0\\
\vdots & \vdots & \ddots & \ddots & \vdots & \vdots \\
0 & 0 & 0 & \dots & 0 & 0 \\
0 & 0 & 0 & \dots & 1 & 0
\end{pmatrix}{\rm d}x.
\]
Thus, the Pfaff equation~$(\amgiS)$ reduces to
\[
\begin{pmatrix}
\eta_k' \\
\eta_{k-1}'\\
\vdots \\
\eta_0'
\end{pmatrix} =
\begin{pmatrix} p_0 & p_1 & p_2 & \dots & p_{k-1} & p_k \\
1 & 0 & 0 & \dots & 0 & 0\\
0 & 1 & 0 & \dots & 0 & 0\\
\vdots & \vdots & \ddots & \ddots & \vdots & \vdots \\
0 & 0 & 0 & \dots & 0 & 0 \\
0 & 0 & 0 & \dots & 1 & 0
\end{pmatrix}
\begin{pmatrix}
\eta_k \\
\eta_{k-1}\\
\vdots \\
\eta_0
\end{pmatrix}\!.
\]
This is exactly the system of ODEs of 1st order equivalent to $(\cD)$.

The above discussion gives our formulation of the well-known correspondence of the category of scalar linear differential equations of order $k+1$ and the category of non-degenerate curves in $P^k$ explored by Wilczynski~\cite{wilch}.

In the most symmetric case we arrive at a rational normal curve $C\subset P\big(\R^{k+1,*}\big)$ given by the (dual) Veronese embedding
\[
[\theta_\text{{Vero}}] \colon\quad P\big(\R^2\big)\to P\big(\R^{k+1,*}\big),\qquad
\left[\begin{pmatrix}z^0\\ z^1\end{pmatrix}\right]\mapsto \big[\big({-}z^1\big)^k, \big({-}z^1\big)^{k-1}z^0, \dots, \big(z^0\big)^k\big].
\]
In terms of representations, it is the ${\rm GL}(2,\R)$-orbit in $P\big(S^k\R^{2,*}\big)$ of the highest weight vector $\big(\varepsilon^1\big)^k\in S^k\R^{2,*}$, where $\big\{\varepsilon^0,\varepsilon^1\big\}$ is the standard basis of $\R^{2,*}$, and $S^k\R^{2,*}$ denotes the $k$-th symmetric power of $\R^{2,*}$, which has a basis $\big\{ \big(\varepsilon^1\big)^k, \big(\varepsilon^1\big)^{k-1}\varepsilon^0, \dots, \big(\varepsilon^0\big)^k \big\}$. The dual representation of~${\rm GL}(2,\R)$ on $\R^{2,*}$ extends naturally to $P\big(S^k\R^{2,*}\big)$. The stabilizer of~$\big[\big(\varepsilon^1\big)^k\big]$ is $B$, the subgroup of upper-triangular matrices in ${\rm GL}(2,\R)$. Thus we have the embedding
\[
P\big(\R^2\big)={\rm GL}(2,\R)/B \to {\rm GL}(2,\R)\cdot \big[\big(\varepsilon^1\big)^k\big],
\]
whose coordinate expression is the one given above.

In terms of the affine coordinate $x=z^1/z^0$ the curve $[\theta]$ is given by
\[
\theta_{\text{Vero}} \colon\ \R^1\ni x\mapsto \big( (-x)^k, (-x)^{k-1},\dots, (-x), 1\big) \in \R^{k+1,*}.
\]
The components of $\theta$ form a fundamental system of solutions of $y^{(k+1)}=0$.

Thus, starting from the dual Veronese embedding $[\theta]_{\text{Vero}}\colon P\big(\R^2\big) \to P\big(\R^{k+1,*}\big)$, we have an~oscu\-lating map $\varphi^*_{\text{Vero}}\colon P\big(\R^2\big)\to \Flag\big(\R^{k+1,*},\phi^*\big)$ generated by $[\theta]_{\text{Vero}}$ and its dual $\varphi_{\text{Vero}}$: $P\big(\R^2\big)\to \Flag\big(\R^{k+1},\phi\big)$. Then the differential equation corresponding to $\varphi_{\text{Vero}}$ is equivalent to the simplest one $y^{(k+1)}=0$.
\end{ex}

\begin{ex}%\label{ex5new}
Consider a smooth surface in 3-dimensional projective space $[\theta]\colon M^2\to P\big(V^{4,*}\big)$ represented by a smooth map $\theta\colon M^2\to V^{4,*}\backslash \{0\}$. Assume that $[\theta]$ generates an osculating map
\[
\varphi^* \colon\ M\to \Flag(V^*,\phi^*)
\]
with $\varphi^{0*}(x)=\langle \theta(x)\rangle \supset \varphi^{1*}(x)=0$, $x\in M$.

There are two cases to distinguish for the type of the map $\varphi^*$, that is when the sequence $\big(\dim V,\dots,\dim \phi^{-1},\dim \phi^0\big)$ is equal to $(4,3,2,1)$ (case $(i)$) or to $(4,3,1)$ (case $(ii)$). Taking local coordinates $(x,y)$ on $M$ and identifying $V^{4,*}$ with $\R^{4,*}$ write
\[
\theta = \big(\theta_0(x,y),\dots, \theta_3(x,y)\big).
\]
In case $(i)$ we see that
\begin{gather*}
\dim \big\langle \theta(p) \big\rangle = 1,\\[.3ex]
\dim \big\langle \theta(p), \theta_x(p), \theta_y(p) \big\rangle = 2,\\[.3ex]
\dim \big\langle \theta(p), \theta_x(p), \theta_y(p), \theta_{xx}(p), \theta_{xy}(p), \theta_{yy}(p) \big\rangle = 3,\\[.3ex]
\dim \big\langle \theta(p), \theta_x(p), \dots, \theta_{yyy}(p) \big\rangle = 4.
\end{gather*}
By a choice of coordinates we can assume that $\theta_y=0$, from which it follows that $\theta_{xy}=\theta_{yy}=0$, so that $\theta$, $\theta_x$, $\theta_{xx}$, $\theta_{xxx}$ are independent and the other derivatives of $\theta$ are expressed as their linear combinations. Thus, we have
\[
\begin{cases}
\theta_y=0,\\
\theta_{xxxx} = p_0\theta_{xxx}+p_1\theta_{xx}+p_2\theta_{x}+p_3\theta.
\end{cases}
\]
Let $\varphi$ be the dual of $\varphi^*$. Then by the same argument as in Example~\ref{ex4new}, we see that the differential equation corresponding to $\varphi$ is equivalent to
\begin{equation}
\tag{$\cD_i$}
\begin{cases}
u_y=0,\\
u_{xxxx} = p_0u_{xxx}+p_1u_{xx}+p_2u_{x}+p_3u,
\end{cases}
\end{equation}
which reduces essentially to a linear ODE.

Next, let us consider case $(ii)$. For each point $a\in M$
\[
\gr \varphi^*(a) = \gr_{-2}\varphi^*(p)+\gr_{-1}\varphi^*(a)+\gr_0\varphi^*(a)
\]
is a $\gr\f_a$-graded module. Here we have $\dim \gr_{-2}\varphi^*=\dim\gr_{0}\varphi^*=1$, $\dim\gr_{-1}\varphi^*=2$ and $\gr\f_a=T_aM$ is a 2-dimensional abelian Lie algebra. If we take bases $\{\zeta_{-2}\}$ and $\{\zeta_{0}\}$ of $\gr_{-2}\varphi^*$ and $\gr_0\varphi^*$ respectively, then we have a symmetric bilinear form $\beta$ on $T_aM$ defined by $uv\zeta_0 = \beta(u,v)\zeta_{-2}$, which is equal to $uv\theta \mod \theta(a),\theta_x(a),\theta_y(a)$ up to scalar multiplication.

Note that if the surface $[\theta]\colon M\to P(V^{4,*})$ is defined in the affine coordinates by $z=f(x,y)$, that is $\theta(x,y)=(1,x,y,f(x,y))$, then the bilinear form $\beta$ coincides with the Hessian of $f$ at $a$.

Again, there are several cases to distinguish: ($a$) $\beta$ is indefinite, that is the signature of $\beta$ is~$(1,1)$; ($b$) $\beta$ is definite; ($c$) $\beta$ is degenerate.

Let us consider case (a). At each point $a\in M$ there exists a direct sum decomposition $T_aM=E_a\oplus F_a$ wtih $\dim E_a=\dim F_a=1$ such that $\beta(v,v)=0$ for $v\in E_a$ or $v\in F_a$ ($E$ and $F$ are called asymptotic directions of $\theta$). Then take local coordinates $x,y$ so that $x$ and $y$ are 1st integrals of $E$ and $F$ respectively. With this choice of coordinates we get
\[
\theta_{xy} \not\equiv 0, \qquad \theta_{xx}\equiv\theta_{yy}\equiv 0 \mod \theta,\theta_x,\theta_y.
\]
Therefore there exist functions $a_i$, $b_i$, $c_i$ of $(x,y)$ ($i=1,2$) such that $\{\theta_0,\theta_1,\theta_2,\theta_3\}$ is a fundamental system of solutions of
\begin{equation}
\tag{$\cD_{ii-a}$}
\begin{cases}
u_{xx} = a_1u_x+b_1u_y+c_1u,
\\
u_{yy} = a_2u_x+b_2u_y+c_2u.
\end{cases}
\end{equation}
Thus, we have shown that for a surface $[\theta]\colon M^2\to P\big(V^{4,*}\big)$ of signature $(1,1)$ the differential equation corresponding to $\varphi\colon M\to \Flag(V,\phi)$ is equivalent to $(\cD_{ii-a})$, where $\varphi^*$ is the osculating map generated by $[\theta]$ and $\varphi$ is dual to $\varphi^*$.

Conversely, suppose we are given a system of differential equations of the form $(\cD_{ii-a})$. Note first that since all higher order derivatives $u_{xxx}, u_{xxy}, \dots $ can be expressed via $u$, $u_x$, $u_y$, and~$u_{xy}$, this system is of finite type and the dimension of solution space is smaller or equal to $4$. The equality holds only when all compatibility conditions are satisfied. If this is the case, then the surface in $P\big(\R^{4,*}\big)$ defined by a fundamental system of solutions of $(\cD_{ii-a})$ has signature~$(1,1)$.

Further study of the remaining cases ($b$) and ($c$) is left to the reader. See also~\cite{wilch2} and \cite{sasaki2, sasaki1} for more detailed studies of case ($a$).

The most symmetric case of the non-degenerate surface in $P\big(V^{4,*}\big)$ is given by the Segre embedding
\[
P(U)\times P(U^*) \ni \big([u],[\alpha]\big)\mapsto [u\otimes\alpha]\in P(U\otimes U^*),
\]
where $U$ is a 2-dimensional vector space.

The determinant on $U\otimes U^*=\Hom(U,U)$ defines a quadratic form $\delta$ on $U\otimes U^*$ of signa\-ture~$(2,2)$, and $\delta(u\otimes \alpha)=0$, so that the surface is a quadric in $P(U\otimes U^*)$.

Note that the group $G={\rm SL}(U)\times {\rm SL}(U)$ acts on $U\otimes U^*$ by
\[
\rho(a,b)v\otimes \alpha = av\otimes b^{*,-1}\alpha\qquad\text{for}\quad
v\in U,\quad \alpha\in U^*,\quad a,b\in {\rm SL}(U),
\]
and therefore acts on $(U\otimes U^*) = U^*\otimes U$ by $\rho(a,b)^{*,-1}$.

Let $\{\varepsilon_0,\varepsilon_1\}$ and $\left\{\varepsilon^0,\varepsilon^1\right\}$ be bases of $U$ and $U^*$ dual to each other. Then the maps $\omega$ and $\theta$ defined by
\[
\begin{tikzcd}
& \rho(a,b)\left(\varepsilon_0\otimes \varepsilon^{1}\right)\subset V=U\otimes U^* \\
G={\rm SL}(U)\times {\rm SL}(U)\ni(a,b)
\arrow[ur, "\omega", start anchor={[xshift=6ex]}, end anchor={[xshift=-4ex]}]
\arrow[dr, "\theta", start anchor={[xshift=6ex]}, end anchor={[xshift=-4ex]}] & \\
& \!\!\!\!\rho^{*,-1}(a,b)\left(\varepsilon^1\otimes \varepsilon_0\right)\subset V^*
\end{tikzcd}
\]
give rise to the embeddings
\[
[\omega]_{\text{Seg}}\colon\ M\to P(V)\qquad\text{and}\qquad
[\theta]_{\text{Seg}}\colon\ M\to P(V^*),
\]
where $M=P(U)\times P(U^*)$ and $V=U\otimes U^*$. Then $[\omega]_{\text{Seg}}$ and $[\theta]_{\text{Seg}}$ generate osculating mappings respectively
\[
\varphi_{\text{Seg}}\colon\ M \to \Flag(V,\phi)\qquad\text{and}\qquad
\varphi_{\text{Seg}}^*\colon\ M\to\Flag(V^*,\phi^*),
\]
which are dual to each other.

We have
\[
\theta \left( \begin{pmatrix}\xi^0\\\xi^1\end{pmatrix}\otimes (\eta_0, \eta_1)\right) =
\left(\varepsilon^0,\varepsilon^1\right)
\begin{pmatrix}
\big({-}\xi^1\big)\eta_1 & \big({-}\xi^1\big)(-\eta_0) \\
\xi^0\eta_1 & \xi^0(-\eta_0)
\end{pmatrix}
\begin{pmatrix}
\varepsilon_0 \\
\varepsilon_1
\end{pmatrix}\!.
\]

In affine coordinates $x=\xi_1/\xi_0$, $y=\eta_1/\eta_0$, the components of $\theta$ are $\{-xy, x, y, -1\}$ and form a fundamental system of solutions of
\begin{gather*}
\begin{cases}
u_{xx}=0, \\
u_{yy}=0.
\end{cases}
\end{gather*}
This is precisely the differential equation that corresponds to the Segre embedding $\varphi_{\text{Seg}}\colon M^2\to \Flag\big(V^4,\phi\big)$.
\end{ex}

\begin{ex}\label{ex:A2}
	Consider a submanifold $[\theta]\colon (M,\f)\to P(V^*)$ represented by a smooth map $\theta\colon M\to V^*\backslash\{0\}$, where $M$ is a 3-dimensional contact manifold endowed with the filtration $\f$ induced from a contact distribution $D=\f^{-1}$. To fix the notation we assume $M=\R^3$ with the standard coordinates $(x,y,z)$ and the contact distribution defined by the contact form:
	\[
	\omega = {\rm d}z+\frac{1}{2}(x\,{\rm d}y-y\,{\rm d}x).
	\]
	Therefore the contact distribution is spanned by
	\[
	X = \frac{\partial}{\partial x} + \frac{1}{2}y\frac{\partial}{\partial z},\qquad
	Y = \frac{\partial}{\partial y} - \frac{1}{2}x\frac{\partial}{\partial z}
	\]
	and $\big\{X,Y,Z\big({=}\frac{\partial}{\partial z}\big)\big\}$ forms a moving frame on $M$. Note that $[X,Y]=-Z$ and we count $\word X=\word Y = 1$, $\word Z=2$ with respect to the weighted order associated with the contact filtration $\f$.
	
	Suppose that $\theta\colon M\to V^*\backslash\{0\}$ is regularly generating $V^*$, that is there exists an osculating map $\varphi^*\colon (M,\f)\to \Flag(V^*,\phi^*)$ generated by $\theta$ and $\varphi^{*0}=\langle \theta\rangle \supset \varphi^{*1}=0$.
	
	Recalling that $1,X,Y,Z,X^2,XY,Y^2,XZ,YZ,X^3,X^2Y,\dots$ form the basis of the ring $\cD$ of~dif\-ferential operators on $M$, we may study possible type of $\phi^*$ by $\left(\dim \phi^{*0}, \dim \phi^{*-1},\dots\right)$.
	
	If we assume that $\theta$, $X\theta$, $Y\theta$ are independent, then the first case to examine will be the case of type $(1,3,4,\dots)$. Suppose, for instance, that
	\[
		X^2\theta,\,XY\theta,\, Y^2\theta \equiv 0 \mod (\theta,X\theta,Y\theta).
	\]
	Then the components of $\theta$ satisfy a system of differential equations of the following form:
	\begin{equation}\label{eq-ex6-1}
	\begin{cases}
	X^2u = A_1 Xu+B_1 Yu + C_1u,\\
	Y^2u = A_2 Xu+B_2 Yu + C_2u,\\
	XYu = A_3 Xu+B_3 Yu + C_3u,
	\end{cases}
	\end{equation}
	where $A_i$, $B_i$, $C_i$ ($i=1,2,3$) are functions on $M$. It is easy to see that $\{1,X,Y,Z\}$ and $\cD\big\langle X^2,XY,Y^2\big\rangle$ are complementary and generate $\cD$. Therefore the system~\eqref{eq-ex6-1} is of finite type with the dimension of the solution space $\le 4$. Therefore in this case we have $\dim V=4$ and the type of $\phi$ is $(1,3,4)$. For example, if the right hand side of equation~\eqref{eq-ex6-1} vanishes identically, its solution space is spanned by functions $\{1,x,y,z+xy/2\}$.
	
{\samepage	Next, consider the case of type $(1,3,5,\dots)$. If we assume
	\[
		X^2\theta,\, Y^2\theta \equiv 0 \mod (\theta,X\theta,Y\theta),
	\]
 	we find an interesting class of differential equations
	\begin{gather}\label{eq-ex6-2}
	\begin{cases}
	X^2u = A_1 Xu+B_1 Yu + C_1u,\\
	Y^2u = A_2 Xu+B_2 Yu + C_2u,\\
	\end{cases}
	\end{gather}
	where $A_i$, $B_i$, $C_i$ are functions on $M$.

}
	
	It is easy to see that $\big\{1,X,Y,XY,Z,XZ,YZ,Z^2\big\}$ are complementary to $\cD\big\langle X^2,Y^2\big\rangle$ and generate the whole $\cD$ together with it. Therefore the solution space of this system is $\le 8$. If the equality holds, then $\dim V=8$ and the type of $\phi^*$ is $(1,3,5,7,8)$.
	
	Now assume that $\theta\colon M\to V^{*,8}$ satisfies the above conditions. Then in this case also by~the same argument as in Example~\ref{ex4new}, we see that the differential equation corresponding to~$\varphi\colon M\to \Flag(V,\phi)$ is equivalent to~\eqref{eq-ex6-2}, where $\varphi$ is the dual to $\varphi^*\colon M\to \Flag(V^*,\phi^*)$.
	
	The most symmetric model of $\varphi\colon M\to \Flag\big(V^8,\phi\big)$ is obtained by the adjoint representation of ${\rm SL}(3,\R)$ on the Lie algebra $\mathfrak{sl}(3,\R)$ as shown in Section~\ref{rank2ex}.
\end{ex}

\section[Equivalence problems, extrinsic normal Cartan connections and invariants]
{Equivalence problems, extrinsic normal Cartan connections\\ and invariants}\label{sec5}

In this section we study the equivalence problem of $L/L^0$ extrinsic geometries and $L/L^0$ dif\-fe\-ren\-tial equations via $L/L^0$ extrinsic bundles. We will consider exclusively those objects that are of constant symbol, say of type $(\g_- , \gr V, L)$. Otherwise, we have to deal with principal fibre bundles with varying structure groups as studied in~\cite{Mor1983}.

The main idea here for the equivalence problem is, starting from a $L/L^0$ extrinsic bundle, to construct, in a canonical manner,
a nice reduction (a subbundle ) whose structure function represents effectively the invariants of the original structure. We shall single out the conditions for the symbol $(\g_-, \gr V, L)$ under which we can construct what we call an extrinsic Cartan connection associated with the original structure. The general case in which it is no more possible to associate Cartan connection is treated in Section~\ref{sec:general}.

\subsection[Relative prolongations, standard models and extrinsic cohomology groups]
{Relative prolongations, standard models and extrinsic \\cohomology groups}

As in Section~\ref{sec:cat}, let $(V, \phi)$ be a filtered vector space,
$L \subset {\rm GL}(V) $ a Lie subgroup, $L^0$ the Lie subgroup of $L$
which fixes $\phi$, $\lf$ and $\lf^0$ the Lie algebras of $L$ and $L^0$
respectively. Denote again by $\phi$ the induced filtration on
$\mathfrak{gl}(V)$ and $\lf$. In~this section we assume that:

\begin{enumerate}\itemsep=0pt
	\item[(C0)]
	There exists a filtration preserving identification of $V$ and $\gr V$ that induces the isomorphism of $\lf\subset \gl(V)$ and $\gr(\lf, \phi)\subset \gl(\gr V)$.
\end{enumerate}

Using these identifications, we will write $V = \bigoplus V_p$, $\lf = \bigoplus \lf_p$, so that $\phi^pV = \bigoplus _{ i \ge p} V_i$ and $\phi^p\lf = \bigoplus _{ i \ge p} \lf_i$.

Let $\g_-$ be a graded Lie subalgebra of $\lf_-$. We define $\bar{\g} = \bigoplus \bar{\g}_p$ inductively as follows:
For $p < 0$, $\bar{\g}_p = \g_p $ and for $p \ge 0$,
\[
\bar{\g}_p = \big\{ A \in \lf _p \colon [A, \g_-] \subset \textstyle\bigoplus_{ i<p} \bar{\g}_i \big\}.
\]
Then as easily seen, $\bar{\g}$ is a graded Lie subalgebra of $\lf$.

\begin{df}
	The graded Lie algebra $\bar{\g}$ defined above is called
	\emph{the relative prolongation} of~$\g_-$ with respect to $\lf$
	and denoted $\Prol( \g_-, \lf )$.
\end{df}

Define the following subgroups of $L$ corresponding to subalgebras $\bar{\g}_0$, $\g_{-}$, $\bar{\g}^0$ respectively:
\begin{gather*}
\bar{G}_0 = \big\{ a \in L \colon {\rm Ad}(a) \g_- \subset \g_-, \text{ and } a V_p \subset V_p, \, \forall p \big\}, \\
\bar G_{-} = \exp(\g_{-}),\\
\bar G^0 = \bar G_0 \exp(\oplus_{p>0}\bar\g_p).
\end{gather*}
For simplicity we assume that there exists a Lie subgroup $\bar G\subset L$ corresponding to the subalgebra $\bar\g\subset \lf$ and containing the subgroup $\bar G_0$ (and thus $\bar G^0$). Note that our main results do not depend on the existence of such subgroup and are based only on the subgroup $\bar G^0$ and the subalgebra $\bar\g$.

Let $S$ be any subgroup of $L$ such that $G_-=\bar G_{-} \subset S \subset \bar G$ and $S^0 = S \cap L^0$. Then the homogeneous space $S/S^0$, endowed with the canonical invariant tangential filtration, is a~standard filtered manifold of type~$\g_-$.

\begin{df}
	The induced map
	\[
	\varphi_{{\rm model}\, S}\colon\ S/S^0 \to L/L^0 \subset \Flag(V, \phi)
	\]
	is called a standard model immersion of type $(\g_-, V, L)$.
\end{df}
Note that all $\varphi_{{\rm model} \, S}$ are locally isomorphic and contain identical open orbit of the subgroup~$G_{-}$. So that we will not specify $S$ and write $\varphi_{{\rm model}}$ unless needed.

According to the categorical isomorphisms in Section~\ref{sec:cat},
we may also speak of standard model of $L/L^0$ differential equation, or extrinsic bundle, of type $(\g_-, \gr V, L)$.

In particular the principal fibre bundle $\bar G \to \bar G/ \bar{G}^0$
equipped with the pull-back of the Maurer--Cartan form $\omega_L$ is a standard model of $L/L^0$ extrinsic bundle of type $(\g_-, \gr V, L)$.

Now we introduce the cohomology group which plays an important role in extrinsic geometry. The quotient space $\lf / \bar{\g}$ is a $\bar{\g}$-module and a fortiori $\g_-$-module. There is the chain complex and the cohomology group $H^p(\g_{-}, \lf/\bar\g)$ associated with the representation
of $\g_-$ on $\lf / \bar{\g}$:
\[
0 \longrightarrow \lf/\bar{\g}
\overset{\partial}\longrightarrow \Hom\left(\g_-, \lf/\bar{\g}\right)
\overset{\partial}\longrightarrow \Hom \left( \wedge^2 \g_-, \lf/\bar{\g}\right)
\overset{\partial}\longrightarrow \cdots.
\]
Noting that the coboundary operator preserves the filtration associated to $\phi$, we actually consider the subcomplex
\[
0 \longrightarrow \phi^1\lf/\bar{\g}
\overset{\partial}\longrightarrow \phi^1\Hom\left(\g_-, \lf/\bar{\g}\right)
\overset{\partial}\longrightarrow \phi^1 \Hom \left( \wedge^2 \g_-, \lf/\bar{\g}\right)
\overset{\partial}\longrightarrow \cdots.
\]

\subsection{Auxiliary groups, complementary subspaces}
For $k \ge 0 $ we define Lie subalgebras $\bar{\g}(k) $ of $\lf$ by
\[
\bar{\g}(k) = \bar{\g}_0 \oplus \cdots \oplus \bar{\g}_k \oplus {\lf}_{k+1} \oplus \cdots
\]
and the Lie subgroups by
\[
\bar{G}(k) = \bar{G}_0 \, {\rm exp}\left( \phi^1\bar{\g}(k)\right)\!,
\]
where
$\phi^1\bar{\g}(k) = \bar{\g}_1 \oplus \cdots \oplus \bar{\g}_k \oplus {\lf}_{k+1} \oplus \cdots
= \phi^1\lf \cap \bar{\g}(k)$. By construction $\bar G(k)\subset \bar G(k-1)$ and~$\bar G(k)=\bar G^0$ for $k$ bigger than the maximal $i$ such that $\lf_i\ne 0$. Clearly the Lie algebra $\bar{\g}^0$ of~$\bar{G}^0$ is~$\phi ^0\bar{\g}$.
We put
\[
E(k) = \g_- \oplus \bar{\g}(k),
\]
which are graded vector spaces converging to the Lie algebra $\bar{\g}$.

Taking an arbitrary complementary graded subspace $\bar{\g}' = \bigoplus \bar{\g}'_p$
of $\bar{\g} $ in $\lf$, we fix a direct sum decomposition
\[
\lf = E( k) \oplus E(k)' ,
\]
where $E(k) ' = \bigoplus_{i \le k} \bar{\g}'_i $.

\subsection[First reduction, bundle Q(0)]
{First reduction, bundle $\boldsymbol{Q(0)}$}\label{ss42}

Let $V$, $\lf$, $L$, $\geneg $ be as above. Let
\[
\varphi\colon\ (M, \f) \to L/L^0 \subset \Flag(V, \phi)
\]
be an osculating immersion of constant symbol of type $(\geneg, \gr V, L)$.

Let $\big(Q(-1), \omega (-1)\big)$ be the $L/L^0$ extrinsic bundle corresponding to
the $L/L^0$ extrinsic geo\-met\-ry $\varphi$, or more precisely, $Q(-1)$ is the induced bundle $\varphi^*L$ of the bundle $L \to L/L^0$
by the map $\varphi\colon M \to L/L^0$ and
$\omega(-1)= \Phi(-1)^*\omega_L$,
where $\omega_L$ denotes the Maurer--Cartan form of $L$ and
$\Phi(-1)\colon\varphi^*L \to L$ the canonical inclusion.

Since $\varphi$ is of constant symbol of type $(\geneg, \gr V, L)$, so is
$Q(-1)$. Therefore for each $x \in M$ there exists $z\in Q(-1)_x$ such that
\[
\gr \omega_z (\gr \f_x) \subset \g _-.
\]
Let $Q(0)$ be the totality of such $z $. Then $Q(0)$ is a principal fibre bundle over$M$ with structure group $\bar G(0) $, and the pair $(Q(0), \omega(0))$ proves to be a $L/L^0$ extrinsic bundle congruent to $Q(-1)$, where $\omega(0)$ is the pull-back of $\omega(-1)$.

We may repeat the construction of $ (Q(0), \omega(0))$
more directly from the map $\varphi $ as follows:

For $x \in M $ we define $Q(0)_x $ to be the set of all filtration preserving isomorphism $a\colon (V, \phi) \to (V, \varphi(x))$ such that $a \in L$ and that $\gr a $ induces a module isomorphism
\[
(\beta, \gr a)\colon\ (\geneg, \gr \phi) \to \big( \gr \f_x, \gr \varphi(x)\big).
\]
Note that $\beta$ is uniquely determined by $\gr a$
as $\beta (\xi) = (\gr a) \rho ( \xi) (\gr a) ^{-1}$ for $ \xi \in \geneg $, where $\rho$ denotes the representation of $\gr \f_x$ on $\gr \varphi(x)$. Put $Q(0) = \bigcup_{x \in M} Q(0)_x $, then we see that $Q(0)$ is a principal fibre bundle over $M$ with structure group $\bar{G}(0)$. Moreover the map $\Phi (0)\colon Q(0) \to L $ which sends $a \in Q(0)_x$ to $a$ is an immersion lifting $\varphi$.

\begin{prop} Let $\omega (0)$ $($or simply $\omega)$ be the pull-back $\Phi(0)^*\omega_L$. Then $\omega(0)$ is a 1-form on~$Q(0)$ taking values in $\lf$, and we have:
	\begin{enumerate}\itemsep=0pt
		\item[$(i)$] $R_a ^* \omega = \Ad \big(a^{-1} \big) \omega$, for $a \in \bar{G}(0)$.
		\item[$(ii)$] $\langle \tilde{A} , \omega \rangle = A$, for $A \in \bar{\g}(0) $.
		\item[$(iii)$] $ {\rm d}\omega + \frac12 [\omega, \omega] = 0$.
		\item[$(iv)$] 	Define the filtration of $Q(0)$ as follows: for $p\le 0$, $\f^p TQ$ being the inverse image of $\f^pTM$ and for $i>0$, $\f^i TQ$ the induced filtration from $\phi^i E(0)$. Then for each $z\in Q(0)$, the map $\om_z\colon T_zQ(0)\to\lf$ is filtration preserving, and $\gr\om_z\colon \gr(T_zQ(0),\f)\to\gr\lf$ maps $\gr(T_zQ(0),\f) $ isomorphically onto $E(0)$ for any $z\in Q$.
	\end{enumerate}
\end{prop}
\begin{proof} The assertions $(i)$, $(ii)$ and $(iii)$ are clear. Let us prove $(iv)$.

	Let $\pi\colon Q(0) \to M $ be the canonical projection, $z \in Q(0)$, and $ x = \pi (z)$.
	Let $g$ be a local cross-section of $\pi\colon Q(0) \to M $ around $x$ with $g(x) = z$.
	Denote by $\rho$ the representation of $\geneg$ on~$\gr V$
	and by $\theta$ that of $\gr\f_x$ on~$\gr\varphi(x)$.
	Then by the definition of $Q(0)$ there exists $\beta\colon \geneg \to \gr\f_x$
	such that it holds, for $A \in \g_p, v \in \gr_q\phi$,
	\[
	\rho (A) v = (\gr g(x))^{-1} \theta (\beta A) \gr g(x) v.
	\]
	If we take a local cross section $X$ of $\varphi^p$ representing $\xi = \beta (A)$, we have
	\begin{align*}
		(\gr g(x))^{-1} \theta (\beta A) \gr g(x) v
		&\equiv g(x) ^{-1} X _x (gv) \mod \phi^{p+q+1} \\
		& = \langle \omega ,g_* X_x\rangle v \\
		& \equiv \langle \gr \omega, \xi \rangle v \mod \phi^{p +q+1}.
	\end{align*}
	Hence $\gr \omega ( \xi ) = \rho \big( \beta ^{-1} \xi \big) $
	for $\xi \in \gr\f_x$, from which it follows
	that $\gr \omega $ maps $\gr T_zQ(0) $
	into~$E(0)$.
	But in view of the following commutative diagram
	and from our assumption that $\varphi$ is an~immersion,
	we see that
	$\gr \omega\colon \gr T_z Q(0) \to E(0) $ is an isomorphism:
	\[
	\begin{CD}
	T_xM @>g_*>>T_{g(x)}L @>\omega>L_{g(x)}^{-1}> T_eL @ = \lf
	\\
	@| @VV \pi_{L*} V @VV \pi_{L*} V @ VV\pi_{\lf} V
	\\
	T_xM @>\varphi_*>> T_{\varphi(x)}L/L^0 @>> \bar{L}_{g(x)}^{-1}> T_{\bar{e}}L /L^0 @ = \lf/ \lf^0.
	\end{CD}\tag*{\qed}
	\]
	\renewcommand{\qed}{}
\end{proof}

\subsection[Structure function chi]{Structure function $\boldsymbol{\chi}$}
According to the direct sum decomposition $\lf = E(0) \oplus E(0)'$, we write
\[
\omega (0) = \omega (0) _I + \omega (0) _{II}, \qquad \text{or simply}\quad
\omega = \omega _I + \omega _{II} ,
\]
where $\omega_I$, $\omega_{II}$ take values in $E(0)$, $E(0)'$ respectively. Then by $(iv)$ of the above proposition
\[
\omega _I\colon\ T_zQ(0) \to E(0)
\]
is an isomorphism and
\[
\omega _{II}\colon\ T_zQ(0) \to E(0)'
\]
is a map of order 1 and vanishes on the vertical vectors.
We define $ \chi = \chi (0) $ by
\[
\omega(0) _{II} = \chi (0)\, \omega(0) _{I},
\]
Then $\chi(0) $ is a function on $Q(0)$ taking values in $\phi^1 \Hom\left(E(0), E(0)'\right)$. Since $\chi(0) A = 0 $ for $A\in\bar\g(0)$, it
takes values in $\Hom\left( \geneg, E(0)'\right)$. Since we see
\[
\Hom\left( \geneg, E(0)'\right) \subset \Hom\left( \geneg, \bar {\g}'\right)
\cong \Hom\left( \geneg, \lf/ \gebar\right)\!,
\]
identifying $\gebar ' $ with $\lf/\gebar$, we may regard $\chi(0)$ as a function taking values in $\Hom( \geneg, \lf/ \gebar)$ and actually in
$\phi^1\Hom( \geneg, \lf/ \gebar)$.

\subsection{Condition (C)}
Starting from $Q(0)$, we carry a series of reductions by using the structure function $\chi$. We make the following assumptions (C1), (C2) which are almost indispensable to construct an extrinsic Cartan connection.

First, note that the adjoint action of $\bar{G}^0$ on $\lf$ leaves invariant $\gebar$ and $\phi^0\gebar$ and acts naturally on $\lf / \gebar$ and $\gebar / \phi^0\gebar$. We shall identify $\geneg$ with $\gebar / \phi^0\gebar$ and define the action of $\bar{G}^0$ on $\geneg$ via this identification. Therefore $\bar{G}^0$ acts on $\Hom\left( \geneg , \lf/ \gebar\right)$ and leaves invariant
$\phi^1\Hom\left(\geneg , \lf/ \gebar\right)$.

\begin{enumerate}\itemsep=0pt
	\item[(C1)] There exists a $\bar G^0$-invariant graded subspace $\gebar'\subset \lf$ such that $\lf = \gebar \oplus \gebar '$.
	\item[(C2)] There exists a $\bar{G}^0$-invariant graded subspace
	\[
	W = \bigoplus W_p \subset \phi^1\Hom\left(\geneg, \lf/\gebar\right)
	\]
	such that $\phi^1\Hom\left(\geneg, \lf/\gebar\right) = \partial \phi^1(\lf/\gebar ) \oplus W$.
\end{enumerate}

{\sloppy
Note that the representation $\rho$ of $\bar{G}^0$
on $\Hom\left(\geneg, \lf/\gebar \right)$ induces
the representation $\rho^{(k)}$ of~$\bar G^{(k)}=\bar{G}^0/\phi^{k+1}\bar{G}^0$
on $\Hom\left( \geneg, \lf/\gebar\right) /\phi^{k+1}\Hom\left( \geneg, \lf/\gebar\right)$. If $W$ is $\bar{G}^0$-invariant, then $W^{(k)}\left(= W/\phi^{k+1}W\right)$ is $\bar G^{(k)}$-invariant.

}

\subsection{Reductions}\label{ss43}
Now let us proceed to the construction of a series of $L/L^0$ extrinsic bundles $\{Q(k), \, k \ge 0\}$.
Conducted by the following diagram :
\[
%\begin{small}
\begin{CD}
@. L @= L @. \\
@. @AA\Phi(k)A @AA\Phi(k+1)A @. \\
\Hom(\geneg, \lf/\gebar) @<\chi(k)<< Q(k) @ <\iota_{k+1, k}<< Q(k+1) @>\chi(k+1)>>\!\!\!\! \Hom(\geneg, \lf/\gebar) \\
@VVV @ VVV @VVV @VVV \\
\Hom(\geneg, \lf/\gebar) ^{(k+1)} @<\chi(k) ^{(k+1)}<< Q(k)^{(k+1)}@ <\bar{\iota}_{k+1, k}<<
Q(k+1) ^{(k+1)} @>\chi(k+1)^{(k+1)}>> \!\!\!\! W ^{(k+1)}\\
@VVV @ V\exp \lf_{k+1}VV @V\exp \gebar _{k+1}VV @VVV \\
W ^{(k)} @<\chi(k) ^{(k)}<< Q(k)^{(k)} @ <\cong<< Q(k+1) ^{(k)} @>\chi(k+1)^{(k)}>>\!\!\!\! W^{(k)} \\
@. @ VVV @VV \bar{G}^0/\phi^{k+1}V @. \\
@. M @= M @.
\end{CD}
%\end{small}
\]
we are going to construct bundles with 1-forms
$\big(Q(k), \bar{G} (k), M ; \omega(k)\big)$ inductively for $ k= 0, 1, 2, \dots$
in such a way that the following conditions are satisfied.

\begin{enumerate}\itemsep=0pt
\item[$(k)_1$] Canonically associated to an immersion
$\varphi\colon (M, \f ) \to L/L^0 $ of type $(\geneg,\gr V, L)$,
there is a principal fibre bundle $Q(k)$ over $(M, \f) $ with structure group $\bar{G} (k)$,
and an immersion $\Phi(k)\colon Q(k) \to L$ which gives bundle
homomorphism:
\[
\begin{CD}
Q(k) @>\Phi(k)>> L \\
@VV\bar{G}(k) V @VVL^0V \\
M @>\varphi>> L/ L^0.
\end{CD}
\]

\item[$(k)_2$]
There is an $\lf$-valued 1-form $\omega(k) $ defined by
$\omega (k) = \Phi(k)^* \Omega$ satisfying
\begin{gather*}
R_a ^* \omega(k) = {\rm Ad}\left(a^{-1}\right)\omega (k) \qquad \forall a \in \bar{G}(k),
\\
\big\langle \omega (k) , \tilde{A} \big\rangle = A \qquad \forall A \in \gebar(k),
\\
{\rm d}\omega(k) + \frac12 [ \omega(k) , \omega(k) ] = 0.
\end{gather*}

Write
\[
 \omega (k) = \omega (k)_I + \omega(k)_{II}
\]
with $\omega (k) _I $ and $\omega(k)_{II}$ being $E(k) $ and $E(k)'$-valued 1-forms
respectively. Then the li\-near map
$\left(\omega(k)_I \right)_z\colon T_z Q(k) \to E(k) $ is a filtration preserving isomorphism
and $\omega(k) _{II}$: $T_z Q(k) \to E(k)' $ is a map of order 1
for each $z \in Q(k)$.

\item[$(k)_3$]
Define a map $\chi(k)\colon Q(k) \to \phi^1\Hom(\geneg, \lf/\gebar)$ by
\[
\omega(k)_{II} = \chi (k) \omega(k)_I, \qquad \text{or symbolically}\qquad
\chi(k) = \frac {\omega(k)_{II}}{\omega(k)_I}.
\]
By definition $\chi(k)$ is a $\Hom(E(k), E(k)')$-valued function
on $Q(k)$, but by $(k)_2$ it actually takes values in
$\phi^1\Hom (\geneg, E(k)')$. Noting that $E(k)' \subset \gebar '$ and identifying $\gebar '$ with $\lf / \gebar $, we~regard $\chi(k)$ as a map from $Q(k)$ to $\Hom( \geneg, \lf/\gebar)$.

\item[$(k)_4$]
If $k \ge 1$, then
\[
Q(k) = \big\{ z \in Q(k-1)\colon \chi (k-1) ^{(k)} (z) \in W^{(k)} \big\}.
\]

\item[$(k)_5 $]
\qquad\ $\chi(k) ^{(k)} = \iota _{k, k-1} ^* \chi(k-1) ^{(k)}, $ \\
where $\iota_{k, k-1}$ denotes the canonical injection
$ Q(k) \to Q(k-1) $.
Therefore
$\chi (k) ^{(k)}$ takes values in $W^{(k)} = W / \phi^{k+1} W$
and
\[
R_g^* \chi(k) ^{(k)} = \rho \left(g^{-1}\right)\chi(k) ^{(k)}
\qquad \text{for}\quad g \in \bar{G}^0.
\]
Note here that the action
of $\bar{G}^0$ on $\phi^1\Hom(\geneg, \lf/\gebar) / \phi^{k+1}$ only depends on $\bar{G}^0/\phi^{k}\bar{G}^0$.

\item[$(k)_6$]
The map $\chi(k)^{(k+1)}\colon Q(k)^{(k+1)} \to \Hom( \geneg, \lf / \gebar ) ^{(k+1)}$ satisfies
\[
R_{g\exp A_{k+1}}^* \chi(k)^{(k+1)}
= \rho \left(g^{-1}\right) \chi(k)^{(k+1)} + \partial\circ\pi (A _{k+1})
\]
for $g\in \bar{G}^0$, $A_{k+1} \in \phi^{k+1} \g(k) $,
where $\partial \circ \pi$ is the composition of the following maps:
\[
\begin{CD}
%0 @>>> \gebar @>>>
\lf @ >\pi>> \lf / \gebar @>\partial >> \Hom(\geneg, \lf / \gebar ).
\end{CD}
\]

\item[$(k)_7$]
Let $\chi(k) _j $ denote the $\Hom(\geneg, \lf / \gebar)_j $-component
of $\chi(k)$ and $\chi(k) ^{(\ell)} = \sum _{j \le \ell} \chi(k)_j $. Then
\[
\partial \chi(k)_{k+1} = B_{ k+1} \big(\chi ^{(k)}, D \chi(k)^{(k)}\big),
\]
where $B_{k+1} $ is a $\Hom\left( \wedge ^2 \geneg , \lf/\gebar \right) _{k+1} $-valued function determined by $\chi(k)^{(k)}$ and its derivatives $D \chi(k)^{(k)}$. In~particular, if $\chi(k)^{(k)} \equiv 0 $, then $B^{(k)}\equiv 0$.
\end{enumerate}

Now let us prove the assertions above by induction on $k$. Supposing that for a non-negative integer $k$ the assertions $(j)_1, \ldots, (j)_7$ $(j< k)$
hold, we prove the assertions for $k$.

First for $(k)_1$, if $k= 0$, $Q(0)$ is already defined. If $k \ge 1$, we define $Q(k)$ by $(k)_4$. Then we see from $(k-1)_6$ that
for every $x \in M$ there exists $z$ in the fibre $Q(k-1){_x}$
over $x$ such that $\chi(k-1)^{(k)} (z) \in W^{(k)}$ and that for $a \in \bar{G} (k-1) $ we have $\chi(k-1)^{(k)} (za) \in W^{(k)}$, if and only if $a \in \bar{G}(k)$. Hence $Q(k) $ is a principal fibre bundle over $M$
with structure group $\bar{G}(k)$. Setting $\Phi(k)$ to be the restriction of $\Phi(k-1)$ to $Q(k)$, we finish the construction $(k)_1$.

To verify the assertion $(k)_2$ we have only to note that
\[
\omega (k) _I = \iota_{k, k-1} ^* \big( \omega (k-1) _I - \omega (k-1) _{ \gebar_k '} \big)
\]
and
\[
\iota_{k, k-1}^* \omega (k-1)_{ \gebar_k '} \equiv 0 \mod \omega (k)_I,
\]
from which it follows that $\omega(k)_ I $ gives an isomorphism
$T_zQ(k) \to E(k) $ at each $z \in Q(k)$.

Now $\chi(k)$ being defined by $(k)_3$, let us prove $(k)_5$.
Let $\iota\colon Q(k) \to Q(k-1)$ be the inclusion. We have
\begin{gather*}
	\iota^* \omega(k-1)_I = \omega(k) _I + \iota ^* \omega(k-1) _{\gebar_k '}, \\
	\iota^* \omega(k-1)_{II} = \omega(k) _{II} - \iota ^* \omega(k-1) _{\gebar_k '}.
\end{gather*}
Let $v \in \g_p \ ( p < 0 ) $, and take $X \in T_zQ(k)$ such that
$ \langle \omega_I , X \rangle = v $.
Then $\chi(k) (z) v = \langle \omega_{II} , X\rangle$.
But we have
\[
\big\langle \iota^* \omega(k-1)_I, X \big\rangle = v + \big\langle \iota ^* \omega(k-1) _{\gebar_k '} , X\big\rangle.
\]
If we put
$B_k = \langle \iota ^* \omega(k-1) _{\gebar_k '} , X\rangle \in \lf_k$
and $ Y= X - \tilde{B_k} \in T_zQ(k-1)$, then we have
$\langle \omega(k-1)_I, Y\rangle$ $= v $ and
\begin{align*}
\chi(k-1)(z) v &=\big\langle \omega(k-1) _{II} , Y \big\rangle
= \big\langle \omega(k-1) _{II} , X - \tilde{B_k} \big\rangle
= \big\langle \omega(k-1) _{II} , X \big\rangle
\\
& = \big\langle \omega(k) _{II} - \iota ^* \omega(k-1) _{\gebar_k '} , X\big\rangle
= \chi(k)(z)v - B_k \\
&\cong \chi(k)(z)v \mod \phi^{k+1+p },
\end{align*}
which implies: $ \chi(k) ^{(k)} = \chi(k-1)^{(k)}$.

The last formula in $(k)_5$ is a consequence of $(k-1)_6$.

Now let us prove the assertion $(k)_6$.
Let $ a = g\exp A $, where $g \in \bar{G} $ and
$A \in \phi^{k+1} \lf$ of which~$\lf_{k+1}$ component
is denoted $A_{k+1}$.

By definition
$\chi(k) = \frac {\omega(k)_{II}} { \omega(k)_I}$
and we have
\[
R_a^*\chi(k)
= \frac {R_a^* (\omega(k)_{II})}{R_a^*( \omega(k)_I) }
=\frac { (R_a^*\omega(k))_{II}} { (R_a^*\omega(k))_I}
=\frac { \big({\rm Ad}\big(a^{-1}\big)\omega(k)\big)_{II}} { \big({\rm Ad}\big(a^{-1}\big)\omega(k)\big)_I}.
\]
Now we have
\begin{align*}
\left({\rm Ad}\big(a^{-1}\omega\big)\right)_I
	&=\left(\Ad (\exp A)^{-1} {\rm Ad}\left(g^{-1}\right)\omega\right)_I \\
	&\equiv \left( {\rm Ad}\left(g^{-1}\right)\omega - \left[ A, {\rm Ad}\left(g^{-1}\right)\omega \right] \right)_I \mod \Order(2k+2)
\\
	&= \left( {\rm Ad}\left(g^{-1}\right) (\omega_I + \omega_{II} ) - \left[ A, {\rm Ad}\left(g^{-1}\right) (\omega_I + \omega_{II}) \right] \right)_I \\
	&\equiv {\rm Ad}\left(g^{-1}\right) \omega_I - \left( \left[ A, {\rm Ad}\left(g^{-1}\right) (\omega_I ) \right] \right)_I \mod \Order(k+2)
\\
	&\equiv {\rm Ad}\left(g^{-1}\right) \omega_I \mod \Order(k+1).
\end{align*}
We have also
\begin{align*}
	\left({\rm Ad}\big(a^{-1}\omega\big)\right)_{II}
	&=\left(\Ad (\exp A)^{-1} {\rm Ad}\left(g^{-1}\right)\omega\right)_{II} \\
	&\equiv \left( {\rm Ad}\left(g^{-1}\right)\omega - \left[ A, {\rm Ad}\left(g^{-1}\right)\omega \right] \right)_{II}
	\mod \Order(2k+2) \\
	&= \left( {\rm Ad}\left(g^{-1}\right) (\omega_I + \omega_{II} ) - \left[ A, {\rm Ad}\left(g^{-1}\right) (\omega_I + \omega_{II} ) \right] \right)_{I I} \\
	&\equiv {\rm Ad}\left(g^{-1}\right) \omega_{II} - \left[ A, {\rm Ad}\left(g^{-1}\right) (\omega_I ) \right]_{II} \mod \Order(k+2).
\end{align*}
Now let $ p < 0$ and $ v \in \g_p $. Take $ X, Y \in T_zQ(k)$ satisfying
\[
\left\langle {\rm Ad}\left(g^{-1}\right) (\omega_I) , X \right\rangle
= \left\langle \left({\rm Ad}\left(a^{-1}\right)\omega\right)_I , Y \right\rangle = v.
\]
Put $Z = Y-X$. Then from the above formula it follows that
the map $ X \mapsto Z$ is of order $\ge k+1$.
Then we have:
\begin{align*}
	\left(R_a^*\chi \right)v &= \left\langle \left({\rm Ad}\left(a^{-1}\right)\omega\right)_{II} , Y \right\rangle
= \left\langle \left({\rm Ad}\left(a^{-1}\right)\omega\right)_{II} , X+Z \right\rangle \\
	&\equiv \left\langle\left( {\rm Ad}\left(g^{-1}\right)\omega\right)_{II} - \left[ A, {\rm Ad}\left(g^{-1}\right) \omega_I \right]_{II} , X+ Z \right\rangle
\\
	&\equiv {\rm Ad}\left(g^{-1}\right) \chi ({\rm Ad}(g)v) - [ A, v] _{II} \mod \Order (k+2).
\end{align*}
Hence we have
\[
R_a^* \chi ^{(k+1)} v = \rho \big(g^{-1} \big) \chi ^{(k+1)}v - [A_{k+1} , v ] _{II},
\]
which proves $(k)_6$.

Finally let us verify $(k)_7$. From the structure equation we have
\begin{gather*}
{\rm d}\omega _I + \frac12 [\omega, \omega]_I = 0, \qquad
{\rm d}\omega _{II} + \frac12 [\omega, \omega]_{II} = 0, \qquad
\omega _{II} = \chi \omega _I.
\end{gather*}
Hence we have
\begin{gather}
{\rm d}\omega _{I} + \frac12 [\omega_{I}, \omega_{I}]_{I} + [\omega_{I}, \omega_{II}]_{I}
+\frac12 [\omega_{II}, \omega_{II}]_{I} = 0,
\\[1ex]
{\rm d}\omega _{II} + [\omega_{I}, \omega_{II}]_{II}
+\frac12 [\omega_{II}, \omega_{II}]_{II} =0.
\end{gather}
Let $v \in \g_p$, $w \in \g_q$ $(p, q < 0 )$ and $X$, $Y$ be the vector fields
on $Q(k)$ determined by $ \langle \omega_I, X\rangle = v$, $\langle \omega _I, Y \rangle = w$.
Evaluating the equations on $X$, $Y$, we have
\begin{gather}
-\langle \omega _I, [X, Y]\rangle + [v, w] + \mathcal A \big([v, \chi(w)]_I \big) + [\chi(v), \chi(w)]_I = 0, \label{eq7}
\\
X\chi(w) - Y \chi(v) - \langle \omega_{II} , [X,Y] \rangle
+[v, w]_{II}+ \mathcal A \big([v, \chi(w) ]_{II}\big) + [\chi (v) , \chi (w ) ] _{II} = 0,\label{eq8}
\end{gather}
where $\mathcal A$ denotes the alternating sum in $v$, $w$.
Substituting~\eqref{eq7} into~\eqref{eq8}, we get
\begin{gather}
X\chi(w) - Y \chi(v) -\chi \big( [v, w] + \mathcal A \big([v, \chi(w) ]_I \big) + [\chi(v), \chi(w)]_I\big)+ [v, w]_{II} \nonumber
\\ \hphantom{X\chi(w) }
{}+ \mathcal A \big([v, \chi(w) ]_{II}\big) +
[\chi (v), \chi (w )]_{II}= 0.\label{eq12}
\end{gather}
Let us compute~\eqref{eq12} under mod $\phi^{p+q+k+2}$.
Observing that
\[
X \chi _{k+1} (w) \in \phi^{k+1+q} \subset \phi^{p+q+k+2},
\]
we have
\begin{gather}
-\chi_{k+1} ( [v, w]) + \mathcal A \big([ v, \chi _{k+1} (w)] _{II} \big)
\nonumber
\\ \qquad
{}= -X\chi^{(k)} (w) + Y \chi ^{(k)}(v)+\chi ^{(k)} \big( [v, w] + \mathcal A \big(\big[v, \chi ^{(k)} (w) \big]_{I}\big) + \big[\chi ^{(k)} (v), \chi ^{(k)} (w)\big]_I\big) \nonumber
\\ \qquad\phantom{=}
{}- \mathcal A \big( \big[v, \chi^ {(k)}(w)\big]_{II} \big)- \big[\chi^{(k)}(v), \chi^{(k)}(w)\big]_{II}.\label{eq13}
\end{gather}
Thus we have
\begin{equation}\label{eq:chi}
\partial \chi _{k+1} = B_{k+1} \big( \chi ^{(k)}, D\chi ^{(k)} \big),
\end{equation}
where $B_{k+1} $ is a $\Hom\left(\Lambda ^2 \geneg , \lf / \gebar \right)$-valued
function defined by the right hand side of~\eqref{eq13}.

Thus we have proved the assertions $(k)_1, \ldots , (k)_7 $,
completing the inductive construction of~$\big(Q(k), \omega(k), \chi(k) \big)$.

Let $(P,\omega ,\chi)$ to be $\big(Q(k),\omega(k),\chi(k)\big)$ for $k$ bigger than the maximal $i$ such that $\lf_i\ne 0$.

Let us now formulate the main result of this section.
\begin{df}
	We say that an $L/L^0$ extrinsic bundle $\pi\colon P\to (M,\f)$ is
	\emph{an extrinsic $W$-normal Cartan connection}, if its structure group is $\bar G^0\subset L^0$ and the structure function $\chi$ defined by~$\om_{II}=\chi \om_I$ takes values in the subspace $W$.
\end{df}
Here we write $ \omega = \omega_I + \omega_{II} $ according to the direct sum decomposition $ \lf = \gebar + \gebar'$. Note that~$\omega_I$ is an absolute parallelism on $P$, which defines a Cartan connection on $M$ modelled by~$\bar G / \bar G^0$ in~the usual sense. We also note that the notion of extrinsic $W$-normalily does not depend on~the choice of the space $\gebar'$.

\begin{thm}\label{thm:reduction}
	Let $\{ V, \phi, L, \geneg \} $ be as above and satisfy
	${\rm (C0)}$, ${\rm (C1)}$ and ${\rm (C2)}$. Then for each immersion $\varphi\colon (M, \f) \to
	L/L^0 \subset \Flag (V, \phi)$ of type $(\geneg, V, L)$ we can construct canonically an~ext\-ri\-nsic $W$-normal Cartan connection $(P,\om)$. 	In particular, two immersions $(M, \varphi)$ and $(M', \varphi ')$ 	are $L$-equivalent if and only if the corresponding extrinsic Cartan connections $(P,\omega)$ and $(P',\omega')$ are isomorphic.
\end{thm}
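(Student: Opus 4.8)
The object $(P,\om)$ of the statement is nothing but the limit $(P,\om,\chi)=\lim_k(Q(k),\om(k),\chi(k))$ of the tower constructed in \S\ref{ss43}; since $V$ is finite dimensional this limit is reached at some finite stage $k_0$, so that $P=Q(k_0)$ and $\om=\om(k_0)$. By $(k_0)_1$ the structure group of $P$ is $\bar G(k_0)=\bar G^0\subset L^0$, and by $(k_0)_4$ together with $(k_0)_5$ the structure function $\chi$, defined by $\om_{II}=\chi\,\om_I$ relative to the fixed splitting $\lf=\gebar\oplus\gebar'$, takes its values in $W$; finally $(k_0)_2$ says that $\om_I\colon T_zP\to\gebar$ is an isomorphism, i.e.\ an absolute parallelism. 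Hence $(P,\om)$ is an extrinsic $W$-normal Cartan connection. The word ``canonically'' means that the assignment $\varphi\mapsto(P,\om)$ is functorial, and this is possible because no step of the tower involves a choice depending on $\varphi$: the complement $\gebar'$, the complements $E(k)'$, and the normalization space $W$ are all determined by the symbol data $(\geneg,\gr V,L)$ alone.

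To prove functoriality, let $(f,\Lambda_a)$ be an isomorphism of $L/L^0$ extrinsic geometries from $\varphi$ to $\varphi'$, so that $\Lambda_a\circ\varphi=\varphi'\circ f$. Sending $(x,\ell)\in\varphi^*L$ to $(f(x),a\ell)\in\varphi'^*L$ defines an $L^0$-equivariant bundle isomorphism $G\colon Q(-1)=\varphi^*L\to\varphi'^*L=Q'(-1)$ covering $f$, and it intertwines $\om(-1)$ with $\om(-1)'$ because it satisfies $\iota'\circ G=\Lambda_a\circ\iota$ and left translations preserve the Maurer--Cartan form $\om_L$. I would then show by induction on $k$ that $G$ restricts to an isomorphism $Q(k)\to Q'(k)$ carrying $(\om(k),\chi(k))$ to $(\om(k)',\chi(k)')$: the case $k=0$ holds because the defining condition $\gr\om_z(\gr\f_x)\subset\geneg$ of $Q(0)$ refers only to the fixed $\geneg$; for the inductive step, $\chi(k)$ is determined by $\om(k)$ and the fixed splitting, hence is intertwined by $G$, while $Q(k+1)$ is cut out of $Q(k)$ by the single condition $\chi(k)^{(k+1)}\in W^{(k+1)}$ with $W$ the same fixed subspace, so $G$ carries $Q(k+1)$ onto $Q'(k+1)$. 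Passing to the limit yields an isomorphism $(P,\om)\to(P',\om')$ of extrinsic bundles over $f$. In particular an $L$-equivalence of $\varphi$ and $\varphi'$ produces an isomorphism of the corresponding extrinsic Cartan connections.

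Conversely, let $F\colon(P,\om)\to(P',\om')$ be an isomorphism of $L/L^0$ extrinsic bundles covering a diffeomorphism $f\colon M\to M'$. Since $P$ sits in $Q(-1)=\varphi^*L$ as a reduction to the subgroup $\bar G^0\subset L^0$ with $\om=\om(-1)|_P$, Proposition~1 (extension of $\g$-connections) identifies, up to congruence, the extension of $P$ to the group $L^0$ with $Q(-1)$; thus $P$ and $\varphi^*L$ are holo-congruent, and likewise $P'$ and $\varphi'^*L$. By the uniqueness part of Proposition~1 the isomorphism $F$ extends to an isomorphism $G\colon\varphi^*L\to\varphi'^*L$ of $L/L^0$ extrinsic bundles over $f$, still intertwining the pulled-back Maurer--Cartan forms, i.e.\ $(\iota'\circ G)^*\om_L=\iota^*\om_L$. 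As $\varphi^*L$ is connected (or, in general, on each connected component), two maps to $L$ with the same pull-back of $\om_L$ differ by a left translation, so $\iota'\circ G=\Lambda_a\circ\iota$ for some $a\in L$; projecting to $L/L^0$ and using that the canonical projections of $\iota,\iota'$ cover $\varphi,\varphi'$ one obtains $\Lambda_a\circ\varphi=\varphi'\circ f$. Hence $\varphi$ and $\varphi'$ are $L$-equivalent. (Alternatively, one may invoke directly the categorical isomorphism of Section~\ref{sec3} between $L/L^0$ extrinsic bundles and $L/L^0$ extrinsic geometries, whose value on $\varphi^*L$ is $\varphi$.)

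The one genuinely delicate point is the functoriality used in the first implication, namely that the entire reduction procedure of \S\ref{ss43} is natural in $\varphi$; everything else is bookkeeping on top of Proposition~1 and Section~\ref{sec3}. This naturality rests precisely on the observation above that $\gebar'$, the $E(k)'$ and $W$ depend only on the symbol, so that a morphism of symbols --- here the identity --- forces the successive reductions to be transported. Finally, it should be noted that the passage between extrinsic bundles and extrinsic geometries is where simple connectedness of $M$ enters (otherwise one works up to covering), and that $\varphi$ being an immersion of constant symbol of type $(\geneg,\gr V,L)$ is exactly what makes the Proposition of \S\ref{ss42} --- which yields the isomorphism $\gr\om_z\colon\gr T_zQ(0)\to E(0)$ --- available to launch the tower.
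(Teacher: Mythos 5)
Your proposal is correct and follows the paper's own route: the paper's proof of this theorem \emph{is} the inductive tower $(Q(k),\om(k),\chi(k))$ of \S\ref{ss43}, whose finite-stage limit is $(P,\om)$, exactly as you describe. The canonicity/functoriality argument and the converse direction (via Proposition~1 and the categorical isomorphisms of Section~\ref{sec3}) are left implicit in the paper, and you supply them correctly.
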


\begin{cor}
	The structure function $\chi$ along with its derivatives fulfills completely the invariants of $(P,\omega)$. In~particular, the immersion $(M,\f)\to L/L^0\subset \Flag (V, \phi)$ of type $(\geneg, V, L)$ is locally equivalent ${(}$under the action of $L)$ to the model embedding if and only if $\chi=0$.
\end{cor}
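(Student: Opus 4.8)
The plan is to deduce the Corollary from Theorem~\ref{thm:reduction}, which I take as given; beyond it only two classical facts are needed — the local classification of absolute parallelisms, and the converse to the Maurer--Cartan equation.

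For the first assertion, I would begin by noting that, by Theorem~\ref{thm:reduction}, the $W$-normal extrinsic Cartan connection $(P,\om)$ is canonically attached to $\varphi$ and two immersions are $L$-equivalent exactly when the associated pairs $(P,\om)$, $(P',\om')$ are isomorphic; so the local invariants of $\varphi$ are precisely those of $(P,\om)$. Next, the $\gebar$-component $\om_I$ of $\om=\om_I+\om_{II}$ restricts, by the limiting case of $(k)_2$, to an isomorphism $T_zP\to\gebar$ at every $z\in P$; hence $\om_I$ is an absolute parallelism on $P$, and since $\om_{II}=\chi\om_I$ the whole form $\om$, and with it the whole geometry, is recovered from the pair $(\om_I,\chi)$. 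By the classical theorem on absolute parallelisms (see~\cite{st64}), $\om_I$ is locally classified by its structure function together with all its $\om_I$-covariant derivatives; and decomposing the structure equation $d\om+\frac12[\om,\om]=0$ into $\gebar$- and $\gebar'$-parts exactly as in the derivation of $(k)_7$ and~\eqref{eq:chi} shows that this structure function — equivalently, the curvature of the Cartan connection $\om_I$ — is a universal polynomial in $\chi$ and its $\om_I$-derivatives. Combining these, $\chi$ and its derivatives form a complete system of local invariants of $(P,\om)$, hence of $\varphi$.

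For the second assertion I would first compute the structure function of the model: the standard model of the $L/L^0$ extrinsic bundle of type $(\geneg,\gr V,L)$ is $\bar G\to\bar G/\bar G^0$ equipped with the pull-back of the Maurer--Cartan form $\om_L$, which is $\gebar$-valued because $\bar G$ corresponds to $\gebar$; its $\gebar'$-component $\om_{II}$ therefore vanishes identically, so $\chi\equiv0$. By the equivalence in Theorem~\ref{thm:reduction}, local $L$-equivalence of $\varphi$ with $\varphi_{\mathrm{model}}$ hence forces $\chi\equiv0$. Conversely, suppose $\chi\equiv0$. Then $\om=\om_I$ is a $\gebar$-valued $1$-form satisfying the Maurer--Cartan equation of $\gebar$, so the classical converse to that equation gives, around each point of $P$, a map $g$ into $\bar G$ with $g^*\om_L=\om$ (unique up to left translation); since $\om_I$ is a parallelism, $g$ is a local diffeomorphism. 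Running $g$ through the construction of Section~\ref{sec3} (from an extrinsic bundle back to an extrinsic geometry), and using that $\om$ is $\gebar$-valued so that the corresponding Pfaff system stays inside the $\bar G$-cosets, one sees that $\varphi$ locally maps into a single $\bar G$-orbit in $\Flag(V,\phi)$, i.e.\ is locally $L$-equivalent to $\varphi_{\mathrm{model}}$.

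The substantive work is already done inside Theorem~\ref{thm:reduction}, so the remaining steps are routine. The one point I expect to need genuine care — and would write out rather than merely cite — is the bookkeeping that identifies the structure function, equivalently the curvature, of the parallelism $\om_I$ with a universal expression in $\chi$ and its $\om_I$-derivatives, so that ``the invariants of $\om_I$'' and ``$\chi$ together with its derivatives'' really denote the same data; this is obtained from the very $\gebar/\gebar'$-decomposition of $d\om+\frac12[\om,\om]=0$ that produced~\eqref{eq:chi}, together with the observation (recorded in the Introduction and already visible in~\eqref{eq:chi}) that the successive components $\chi_k$ are pinned down by the lower ones only up to the cohomology group $H^1_+(\geneg,\lf/\gebar)$.
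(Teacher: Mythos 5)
Your proposal is correct and supplies exactly the argument the paper leaves implicit after Theorem~\ref{thm:reduction}: the completeness claim follows from the classical theory of absolute parallelisms applied to $\om_I$ together with the observation (already contained in the derivation of $(k)_7$ and~\eqref{eq:chi}) that the structure function of the parallelism is a universal polynomial in $\chi$, and the rigidity claim follows from integrating the $\gebar$-valued flat form via the Pfaff system of Section~\ref{sec3}. This is essentially the same route the paper intends, so no further comment is needed.
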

\begin{rem}
	Of course the structure function
	\[
		\chi\colon\ P\to \Hom\left(\bar\g, \lf/\bar\g\right)
	\]
	depends on the choice of a complementary subspace $\bar\g'$ to $\bar\g$ in $\lf$. Let $\bar\g''$ be another $\bar G^0$-invariant complementary subspace and let $\chi'$ be the structure function determined by this choice. Then we see easily that
	\[
		\chi' = (1-\chi\lambda)^{-1}\chi = \bigg(1 + \chi\lambda + \frac{1}{2!}(\chi\lambda)^2+\cdots\bigg)\chi,
	\]
	where $\lambda\in \Hom(\lf/\bar\g,\bar\g)$ is given by $\lambda = \sigma' -\sigma$, $\sigma, \sigma'$ being the isomorphisms from $\lf/\bar\g$ to $\bar\g'$ and~$\bar\g''$ respectively. In~particular, if $\chi$ vanishes identically up to order $k$, then the same holds also for~$\chi'$.
	
	Similarly, the structure function $\chi$ also depends on the choice of the complementary sub\-space~$W$:
	\[
		\phi^1\Hom\left(\bar\g, \lf/\bar\g\right)=\phi^1\partial(\lf/\bar\g)\oplus W.
	\]
	Let $W'$ be another $\bar G^0$-invariant complementary subspace and let $\chi'$ be the new structure function. Again, it can be easily shown that $\chi$ vanishes up to order $k$ if and only if the same holds for $\chi'$.
\end{rem}

See Section~\ref{sec:general} for a generalization of the theorem and the corollary above.

\section{Rigidity of rational homogeneous varieties}
\label{sec:var}

\subsection{Extrinsic parabolic geometries}\label{parabolic}
Let $\g=\bigoplus_{i=-\mu}^\mu\g_i$ be a semi-simple graded Lie algebra. Denote by $\g_{-}$ the nilpotent subalgebra $\bigoplus_{i<0}\g_i$ and by $\pf$ the parabolic subalgebra $\bigoplus_{i\ge0}\g_i$ in $\g$. Let $\V$ be an arbitrary faithful finite-dimensional irreducible representation of $\g$.

\smallskip
Let $B$ be the Killing form of $\g$. The following properties are well-known~\cite{capslovak}:
\begin{enumerate}\itemsep=0pt
	\item There exists a grading element $E\in \g$, such that $\g_k=\{X\in\g\mid
	[E,X]=kX\}$.
	\item $B(\g_i,\g_j)=0$ for all $i+j\ne0$, and the restriction of $B$ to
	$\g_i\times\g_{-i}$ is non-degenerate.
	\item In the real case, there exists an involution $\theta$ of $\g$
	such that $\theta(\g_i)=\g_{-i}$ and the bilinear form $(X,Y)=-B(X,\theta(Y))$
	is positive definite. Moreover, for any $\g$-module $\V$ there exists a scalar
	product $(\,,\,)$ on $\V$ such that
	\[
	(Xu,v) + (u,\theta(X)v) = 0\qquad\text{for all}\quad u,v\in V,\quad X \in\g.
	\]
	\item In the complex case, there exists an anti-involution $\sigma$ of $\g$ such that
	$\sigma(\g_i)=\g_{-i}$ and the Hermitian form $(x,y)=-B(x,\sigma(y))$ is
	positive definite. Moreover, for any $\g$-module $\V$ there exists a positive
	definite Hermitian form $(\,,\,)$ on $\V$ such that
	\[
	(Xu,v) + (u,\sigma(X)v) = 0\qquad\text{for all}\quad u,v\in V,\quad X \in\g.
	\]
\end{enumerate}

Indeed, consider first the complex case. Then the existence of an anti-involution $\sigma$ such that $\sigma(\g_i)=\g_{-i}$ and the Hermitian form $(x,y)=-B(x,\sigma(y))$ is positive definite, is shown in~\cite[Lemma~1.5]{tan79}. In~particular, the corresponding real form $\g^{\sigma}$ is compact. Hence, it stabilizes a~certain positive-definite Hermitian form $(\,,\,)$ on~$\V$. Since any element $X\in \g$ can be represented as~$X=X_1+{\rm i}X_2$ and $\sigma(X)=X_1-{\rm i}X_2$, where $X_1,X_2\in \g^\sigma$, we immediately get
\begin{gather*}
(Xu,v) + (u,\sigma(X)v) = (X_1u,v) + (u,X_1v) + ({\rm i}X_2u,v)+(u,-{\rm i}X_2v)
\\ \hphantom{(Xu,v) + (u,\sigma(X)v)}
{}= (X_1u,v) + (u,X_1v) + {\rm i}(X_2u,v)+{\rm i}(u,X_2v) = 0.
\end{gather*}

Similarly, in the real case, the existence of an involution $\theta$ such that $\theta(\g_i)=\g_{-i}$ and the bilinear form $(x,y)=-B(x,\theta(y))$ is positive definite, is also demonstrated in~\cite{tan79}. Let $\g=\g^\theta(1)\oplus\g^\theta(-1)$ be the eigenvalue decomposition of $\theta$. Then it is well known that the Lie algebra $\g^\theta(1)+{\rm i}\g^\theta(-1)$ is compact in $\gl(\V^{\C})$. So, it preserves a positive definite Hermitian form on~$\V^{\C}$. Restricting the real part of this form on $\V$ gives us the requires positive definite scalar pro\-duct~on~$\V$.

For any $A\in\gl(\V)$, denote by $A^*$ the operator adjoint to $A$ with respect
to scalar (Hermitian) product $(\,,\,)$ on $\V$. Then the above conditions imply
that $\theta(X)=-X^*$ (resp., $\sigma(X)=-X^*$) for any $X\in\g$. In~particular, $\g$ is stable with respect to the (anti-)involution $A\mapsto -A^*$ of $\gl(\V)$.

Decomposing $\V$ according to the eigenvalues of the grading element $E$, we
can equip $\V$ with the grading $\V=\bigoplus \V_{j}$ such that $\g_i.\V_j\subset \V_{i+j}$ for all~$i$,~$j$. Note that the degrees in the sum $\V=\bigoplus \V_{j}$ may be rational, but the difference between any two degrees $j$, $j'$ for which both $\V_{j}$ and $\V_{j'}$ are non-trivial is necessarily an integer. Let $j_{\max }$ be the largest such degree. Shifting the grading of $\V$ by $-j_{\max }-1$, we can always normalize the grading of $\V$ such that it is concentrated in the negative degree with $\V_{-1}\ne 0$. Note that independently of such shift of degrees the induced grading of $\gl(\V)$ is always defined via eigenvalues of the grading element~$E$.

Similarly, if $\V$ is an arbitrary finite-dimensional representation of the Lie algebra $\g$, we can decompose it into the sum of irreducible representations, introduce the above grading on each of the summands, and combine them into the grading of~$\V$. So, from now on we assume that~$\V$ is an arbitrary (not necessarily irreducible) finite-dimensional representation of~$\g$, equipped with the above grading. Let $\phi$ be the induced filtration of $\V$, which is by definition stabilized by $\pf$.

Let $G$ be a connected semisimple Lie group with a Lie algebra $\g$, and let $P$ be the parabolic subgroup corresponding to the subalgebra $\pf$. The homogeneous flag variety $G/P$ is a rational homogeneous variety.
Assume that the representation $\g\to\gl(\V)$ can be extended to the representation $G\to {\rm GL}(\V)$ of the Lie group $G$. Let
\[
\varphi_{{\rm model}}\colon\ G/P \to \Flag(\V,\phi)
\]
be the standard embedding of type $(\g_{-},\V,G)$.

\begin{lem}\label{symalg}
	Suppose that $\V$ is isotypic, that is all highest weights of $\V$ coincide. Then the relative prolongation $\Prol(\g_{-},\V)$ is equal to $\g+Z(\g)\subset \gl(\V)$, where $Z(\g)$ is the centralizer of~$\g$ in $\gl(\V)$.
\end{lem}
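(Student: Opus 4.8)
The plan is to prove the two inclusions. Write $\hat\g:=\Prol(\g_-,\V)$ for the relative prolongation of $\g_-$ inside $\lf=\gl(\V)$; by definition this is the maximal graded Lie subalgebra of $\gl(\V)$ whose negative part equals $\g_-$.

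The inclusion $\g+Z(\g)\subseteq\hat\g$ is the easy one. Since $\V$ is isotypic we may write $\V\cong\V_0\otimes\C^m$, and then $Z(\g)\cong\gl_m$ acts on the multiplicity factor, so $\g+Z(\g)$ is a Lie subalgebra of $\gl(\V)$; as $Z(\g)$ consists of degree-$0$ operators its negative part is exactly $\g_-$, and maximality of $\hat\g$ gives the inclusion.

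For the reverse inclusion I would first observe that $\hat\g$ is a $\g$-submodule of $\gl(\V)$ under the commutator action, since it contains $\g$ and is a subalgebra. Next I would use irreducibility: $\g+Z(\g)=\g\oplus\mathfrak{sl}_m\oplus\C\,\id$ acts on $\V\cong\V_0\otimes\C^m$ by a tensor product of irreducible representations, hence irreducibly, so by Burnside's theorem it already generates $\gl(\V)$ as an associative algebra. A fortiori $\hat\g$ acts faithfully and irreducibly on $\V$, hence is reductive with one-dimensional centre $\C\,\id$; write $\hat\g=\C\,\id\oplus\hat\g_{ss}$ with $\hat\g_{ss}=[\hat\g,\hat\g]$ semisimple. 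The grading restricts to $\hat\g_{ss}$, and since $\id$ has degree $0$ we get $(\hat\g_{ss})_{<0}=\g_-$ and $\hat\g_{ss}\supseteq[\g+Z(\g),\g+Z(\g)]=\g\oplus\mathfrak{sl}_m$, so it remains to prove $\hat\g_{ss}=\g\oplus\mathfrak{sl}_m$. Here I would invoke the structure of semisimple graded Lie algebras: the Killing form of $\hat\g_{ss}$ pairs its degree-$p$ and degree-$(-p)$ parts nondegenerately, so the grading is symmetric; in particular a simple ideal of $\hat\g_{ss}$ on which the grading element acts nontrivially has nonzero negative part, while any other simple ideal is concentrated in degree $0$ and hence commutes with $(\hat\g_{ss})_{<0}=\g_-$. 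Since $\g_-$ is already exhausted by $\g\subseteq\hat\g_{ss}$, the ``new'' content of $\hat\g_{ss}$ lies in degrees $\ge0$. A strictly-positive-degree endomorphism of $\V$ commuting with $\g_-$ annihilates the top graded piece $(\V_0)_{-1}$, since it raises degree past the top, and hence all of $U(\g_-)(\V_0)_{-1}=\V_0$, so it vanishes; this rules out new content in positive degrees. The degree-$0$ new content is then confined to the centralizer $\{A\in\gl(\V)_0:[A,\g_-]=0\}$, which via $\V\cong\V_0\otimes\C^m$ I would compute to be $\id_{\V_0}\otimes\gl_m=Z(\g)$; together with the Tanaka-type rigidity that every degree-$0$ derivation of $\g_-$ is induced by an element of $\g_0$, this forces $\hat\g_{ss}=\g\oplus\mathfrak{sl}_m$, whence $\hat\g=\g+Z(\g)$.

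The step I expect to be the main obstacle is this last computation of the degree-$0$ centralizer of $\g_-$: showing that for $\V_0$ irreducible every degree-$0$ endomorphism of $\V_0$ commuting with $\g_-$ is a scalar. The argument I have in mind is that such an endomorphism is determined by its restriction to the top graded piece $(\V_0)_{-1}$ — because $\V_0=U(\g_-)(\V_0)_{-1}$ and it commutes with $\g_-$ — and that this restriction is forced to be $\g_0$-equivariant, hence scalar by Schur's lemma since $(\V_0)_{-1}$ is $\g_0$-irreducible. It is precisely at this point that the isotypy hypothesis is indispensable; for a non-isotypic $\V$ the relative prolongation genuinely acquires further blocks and the statement fails.
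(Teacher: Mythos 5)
Your overall architecture is in fact very close to the paper's: both proofs rest on (a) irreducibility of the action forcing reductivity, hence a grading of the prolongation that is symmetric about degree zero, and (b) the observation that the degree-zero excess must centralize $\g_{\pm}$ and hence lie in $Z(\g)$. But the two steps that carry the real weight are not justified as you have written them. The first is the elimination of new content in positive degrees. You apply the vanishing lemma ``a positive-degree endomorphism commuting with $\g_-$ kills the top graded piece of $\V$, hence vanishes'' to the positive-degree part of a complement to $\g\oplus\sll_m$ in $\hat\g_{ss}$. Nothing you have proved says such elements commute with $\g_-$: an element $A\in\hat\g_p$ with $p>0$ only satisfies $[A,\g_{-1}]\subset\hat\g_{p-1}$, which is nonzero in general ($\g_1$ itself is the obvious example), and for a graded $\g$-module complement $U$ concentrated in degrees $\ge 0$ one has $[\g_{-1},U_1]\subset U_0$, not $0$. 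The correct deduction is the dimension count you set up but never execute: the Killing form of $\hat\g_{ss}$ gives $\dim(\hat\g_{ss})_p=\dim(\hat\g_{ss})_{-p}=\dim\g_{-p}=\dim\g_p$ for $p>0$ (the last equality because $\g$ is itself semisimple and graded by $\ad E$), and since $\g_p\subseteq(\hat\g_{ss})_p$ this forces equality. This is exactly the paper's ``dimensions of subspaces of opposite degrees coincide'' step.

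The second gap is the degree-zero computation, which you yourself flag as the expected obstacle. The claim that the restriction of $A\in Z_{\gl(\V)_0}(\g_-)$ to the top graded piece is ``forced to be $\g_0$-equivariant'' has no justification: $A$ only commutes with $\g_-$, and $Z_{\gl(\V)_0}(\g_-)$ is a $\g_0$-module whose elements need not be $\g_0$-invariant; indeed, if $\g$ has a simple ideal concentrated in degree $0$, that centralizer is strictly larger than $Z(\g)$, so some hypothesis on the grading must enter. The clean route --- essentially the paper's --- is: once $(\hat\g_{ss})_{\pm p}=\g_{\pm p}$ for $p>0$ is known, a graded $\g$-module complement $U$ to $\g\oplus\sll_m$ sits entirely in degree $0$, so $[\g_{\pm p},U]\subset U_{\pm p}=0$ for $p>0$; thus $U$ is annihilated by the ideal generated by $\g_-\cup\g_+$, which is all of $\g$ when every simple factor is nontrivially graded, whence $U\subset Z(\g)$. (Equivalently: a nontrivial irreducible $\g$-submodule of $\gl(\V)$ cannot be concentrated in non-negative degrees, since the $E$-eigenvalues over a Weyl orbit of weights sum to zero.) The appeal to ``Tanaka-type rigidity of degree-zero derivations'' is not needed. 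With these two repairs your argument coincides in substance with the paper's proof.
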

\begin{proof}
	It is sufficient to prove lemma in the complex case, as the real case immediately follows via the complexification argument. Consider $\g$-module $\gl(\V)$. Assume first that $\V$ is irreducible. As $\Prol(\g_{-},\V)$ contains $\g$, it is a submodule of the $\g$-module $\gl(\V)$. Let $U$ be any graded submodule of $\Prol(\g_{-},\V)$ such that $\g\cap U = 0$. By definition $U$ is concentrated in the non-negative degree. Hence, it generates a $\g$-invariant subalgebra $\n$ that is also concentrated in a~non-negative degree. In~particular, $\g+\n$ is also a subalgebra. Since $\g$-module $\V$ is irreducible, $(\g+\n)$ also acts irreducibly on $\V$. Hence (see \cite[{Chapitre~1, Section~6, Th\'eor\`eme~4}]{bou}), $\g+\n$ is a reductive subalgebra in $\gl(\V)$. From the general theory of graded semisimple Lie algebras it follows that the dimensions of subspaces of opposite degrees should coincide. This is only possible if $\n$, and thus $U$ is concentrated in~degree~$0$. As $[\g_{-},U]\subset U$, this immediately implies that $[\g_{-}, U] =0$, and $U\subset Z(\g)$.
	
	Let $\V$ be now an arbitrary isotypic $\g$-module, that is $\V$ is isomorphic to $\V'\otimes \C^k$, where~$\V'$ is irreducible and $\C^k$ is a trivial $\g$-module. We note that $\g$-module $\gl(\V)$ is isomorphic to~$\gl(\V')\otimes\gl(k,\C)$ and any its irreducible submodule has the form $U\otimes L$, where $U$ is an irreducible submodule of $\gl(\V')$ and $L$ is a $1$-dimensional subspace in $\gl(k,\C)$. If such submodule lies in~$\Prol(\g_{-},\V)$ and has zero intersection with $\g$, then it is concentrated in the non-negative degree. It~is easy to see that $U$ itself lies in $\Prol(\g_{-},\V')$. By above this means that $U\subset Z(\g)$. This completes the proof.
\end{proof}

\begin{rem}
	The condition of $\V$ to be isotypic can not be dropped. The simplest counter-example is given by $\g=\sll(2,\C)$ and $\V=V_1\oplus V_2$,	where $V_1$ and $V_2$ are two irreducible representations of $\sll(2,\C)$ of different dimension.
\end{rem}
In the following we always assume that the $\g$-module $\V$ is isotypic.
We also say that $\varphi_{{\rm model}}$ is \emph{a standard embedding} of $G/P$ corresponding to the $\g$-module~$\V$.

\subsection{Harmonic theory on semisimple Lie algebras}\label{s:harmonic}

Let us introduce an invariant symmetric bilinear form on $\gl(\V)$ as follows
\[
\Tr(A,B) = \tr AB,\qquad\text{for all}\quad A,B\in \gl(\V).
\]
It is clear that the restriction of $\Tr$ to $\gl_i(\V)\times\gl_j(\V)$ is
identically zero for $i+j\ne0$ and is non-degenerate for $i+j=0$.

Note that the restriction of $\Tr$ to any semisimple subalgebra of
$\gl(\V)$ is non-degenerate. In~particular, the restriction of $\Tr$ to $\g$ is non-degenerate, and we have $\gl(\V)=\g+\g^\perp$, where $\g^{\perp}$ is an orthogonal complement of $\g$ in $\gl(\V)$ with respect to $\Tr$. Similarly, let $\bar\g$ be the prolongation of $(\g_{-},\V)$. Then according to Lemma~\ref{symalg}, we have $\bar\g=\g+Z(\g)$.

\begin{lem}
	We have the following orthogonal decomposition with respect to $\Tr$:
	\[
		\gl(\V) = \g \oplus Z(\g) \oplus \bar\g^\perp.
	\]
\end{lem}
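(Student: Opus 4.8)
The plan is to build the decomposition from the outside in, in three steps. First I would check that $\g$ and $Z(\g)$ are $\Tr$-orthogonal. Since $\g$ is semisimple we have $\g=[\g,\g]$, so it is enough to verify $\tr([X,Y]B)=0$ for $X,Y\in\g$ and $B\in Z(\g)$; and indeed, as $B$ commutes with $X$, cyclicity of the trace gives $\tr(XYB)=\tr(YBX)=\tr(YXB)$, whence $\tr([X,Y]B)=0$.

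Next I would show that the restriction of $\Tr$ to $Z(\g)$ is non-degenerate. By the complexification argument already used in the proof of Lemma~\ref{symalg} it suffices to argue over $\C$. There, since $\V$ is isotypic, we may write $\V\cong\V'\otimes\C^k$ with $\V'$ irreducible and $\g$ acting through the first factor, so that $Z(\g)=\id_{\V'}\otimes\gl(k,\C)$; then for $B=\id\otimes B_0$ and $C=\id\otimes C_0$ in $Z(\g)$ one computes $\Tr(B,C)=\tr_\V(BC)=(\dim\V')\,\tr(B_0C_0)$, a nonzero multiple of the trace form of $\gl(k,\C)$, which is non-degenerate. (Alternatively one can invoke that the commutant of a completely reducible matrix algebra is a semisimple associative algebra, whose trace form is non-degenerate in characteristic zero.)

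Finally I would assemble the pieces. The center of the semisimple algebra $\g$ is zero, so $\g\cap Z(\g)=0$, and Lemma~\ref{symalg} gives $\bar\g=\g\oplus Z(\g)$ as vector spaces. By the first two steps, together with the non-degeneracy of $\Tr|_\g$ recalled in the text, this is an orthogonal direct sum on which $\Tr$ is non-degenerate, hence $\gl(\V)=\bar\g\oplus\bar\g^\perp$. Since $\g,Z(\g)\subset\bar\g$, the complement $\bar\g^\perp$ is orthogonal to each of them, so the three subspaces $\g$, $Z(\g)$, $\bar\g^\perp$ are mutually orthogonal; and a short argument using non-degeneracy of $\Tr$ on each of $\g$ and $Z(\g)$ (if $x\in\g$ decomposes as $y+z$ with $y\in Z(\g)$, $z\in\bar\g^\perp$, then $x\perp\g$, forcing $x=0$, and likewise for the other two slots) shows the triple sum is direct. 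This yields the asserted orthogonal decomposition $\gl(\V)=\g\oplus Z(\g)\oplus\bar\g^\perp$.

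I expect Step 2 — pinning down $Z(\g)$ precisely enough to see that $\Tr$ remains non-degenerate on it — to be the only part requiring genuine thought; everything else is formal manipulation of the trace form and its orthogonal complements.
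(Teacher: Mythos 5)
Your proof is correct and follows essentially the same route as the paper's: reduce to the complex case, use the isotypic decomposition $\V\cong\V'\otimes\C^k$ and Schur's lemma to identify $Z(\g)$ and see that $\Tr$ is non-degenerate there, and check that $\g$ and $Z(\g)$ are $\Tr$-orthogonal. The only cosmetic difference is that you derive the orthogonality from $\g=[\g,\g]$ and cyclicity of the trace, whereas the paper reads it off the tensor factorization as $\tr(XA)=\tr(X)\tr(A)=0$; both rest on the same fact that elements of a semisimple $\g$ act tracelessly.
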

\begin{proof}
	It is sufficient to consider only the complex case. As above, decompose $\V$ as $\V'\otimes \C^k$, where the module $\V'$ is irreducible. Then from Schur's lemma, we know that
$Z(\g)=\gl(k,\C)$. Thus, it is easy to see that the restriction of $\Tr$ on $Z(\g)$ is non-degenerate. Moreover, for any $X\in \g$ and $A\in Z(\g)$, we have $\tr(XA) = \tr(X)\tr(A) = 0$.
\end{proof}

Note that all three components of this decomposition are $\g$- and $Z(\g)$-invariant. Moreover, since the Lie algebra $\g$ is graded and $Z(\g)\subset \gl_0(\V)$, we see that this decomposition (namely, the subspace $\bar\g^\perp$) is also compatible with the grading of $\gl(\V)$.

Let $\big(C=\sum_qC^q,\p\big)$ be the cochain complex associated with the $\g_{-}$-module $\gl(\V)$. It inherits the grading from the gradings of
$\g_{-}$ and $\gl(\V)$:
\[
C^q = \sum_p C_p^q,\qquad
C_p^q = \big\{c\in C^q \colon c(\g_{-i_1}\wedge\dots\wedge\g_{-i_q})\subset \gl_{p-i_1-\dots-i_q}(\V)\big\}.
\]
It is clear that the coboundary operator $\partial$ preserves grading and, hence, maps $C_p^q$ to $C_p^{q+1}$.

Similarly, for any $\g_{-}$-invariant graded subspace $W$ in $\gl(\V)$, denote
by $C(W)=\sum C^q_p(W)$ the cochain complex associated with the
$\g_{-}$-module $W$. We shall be particularly interested in~$C\big(\bar\g^\perp\big)$.

Any cochain $c\in C^q$ can be uniquely decomposed into the sum $c=c_I + c_{II}$ according to the decomposition $\gl(\V)=\bar\g\oplus
\bar\g^\perp$.

The cohomology $H^q_p\big(\geneg, \bar\g^\perp\big)$ can be effectively computed using Kostant theorem~\cite{kost} and the notion of normality formalized by
Tanaka~\cite{tan79}. Namely, we can extend the scalar products on $\g$ and $\V$ introduced in Section~\ref{parabolic} to the spaces $C^q$ and introduce the adjoint coboundary operator $\p^*\colon C^{q+1}\to C^{q}$ and the Laplacian $\Delta=\p\p^*+\p^*\p\colon C^q\to C^q$. The standard argument shows that $\ker \Delta = \ker \p \cap \ker \p^*$ and we have the direct product decomposition $C^q = \im \p \oplus \im \p^* \oplus \ker \Delta$. We shall call an element $c\in C^q$ \emph{harmonic}, if $\Delta c=0$, which is equivalent to $\p c=\p^*c=0$. Additionally, for any $\chi\in C^q$ we denote by $H\chi$ the projection of $\chi$ to $\ker \Delta$ in the above direct product decomposition. We shall also call $H\chi$ the \emph{harmonic part of $\chi$}.

\subsection{Normal reductions}

Let now $(M,\f)$ be an arbitrary filtered manifold of constant type $\m=\g_{-}$. Let
$\varphi\colon (M,\f) \to \Flag(\V,\phi)$ be arbitrary osculating map. Define $\bar\g'=\bar\g^\perp$ in the condition (C1) and $W=\phi^1\ker\partial^*$ in the condition (C2) of Section~\ref{ss43}.

In particular, an extrinsic Cartan connection associated with the embedding $\varphi$ is $W$-normal, if and only if we have $\partial^*\chi = 0$. We shall call such extrinsic Cartan connections just normal.

Applying Theorem~\ref{thm:reduction}, we immediately get
\begin{thm} For any embedding $\varphi\colon (M,\f) \to \Flag(\V,\phi)$ of type $(\g_{-},\V)$, there is a unique normal extrinsic Cartan connection $\om\colon TP\to \gl(\V)$ on the principal $\overline G^0$-bundle $P\to M$.
\end{thm}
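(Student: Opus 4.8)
The plan is to derive the statement directly from Theorem~\ref{thm:reduction}, applied with $L=GL(\V)$, $\lf=\gl(\V)$, $L^0=GL(\V)^0$ and $\geneg=\g_-$, so that by Lemma~\ref{symalg} the relative prolongation is $\gebar=\Prol(\g_-,\V)=\g\oplus Z(\g)$ and the standard model is the rational homogeneous variety $G/P\into\Flag(\V,\phi)$. Since $\varphi$ is an osculating map of type $(\g_-,\V)$ into $GL(\V)/GL(\V)^0$, it is exactly of the kind to which Theorem~\ref{thm:reduction} applies, and the whole proof reduces to checking that the data $\{\V,\phi,GL(\V),\g_-\}$, together with the choices $\gebar'=\gebar^\perp$ and $W=\phi^1\Ker\partial^*$ of subsection~\ref{s:harmonic}, satisfy conditions (C0), (C1) and (C2). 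The bundle $(P,\om)$ is then the one produced by the canonical construction of subsection~\ref{ss43}, and normality --- the condition $\partial^*\chi=0$ --- is precisely the requirement that $\chi$ take values in $W=\phi^1\Ker\partial^*$.

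Condition (C0) holds because $\gl(\V)$ is graded by the eigenvalues of $\ad E$. For (C1) I would take $\gebar'=\gebar^\perp$, the orthogonal complement of $\gebar$ in $\gl(\V)$ with respect to the trace form $\Tr$. By the decomposition $\gl(\V)=\g\oplus Z(\g)\oplus\gebar^\perp$ established in subsection~\ref{s:harmonic}, $\gebar^\perp$ is a linear complement of $\gebar=\g\oplus Z(\g)$; it is $\gebar$-invariant because $\Tr$ is $\Ad(GL(\V))$-invariant, and it is graded because $\Tr$ pairs $\gl_i(\V)$ with $\gl_{-i}(\V)$ and $\gebar$ is graded. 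Hence $\gl(\V)/\gebar\cong\gebar^\perp$ as graded $\gebar$-modules, which is (C1).

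For (C2) I would pass to the cochain complex $C(\gebar^\perp)=\sum_{p,q}C^q_p(\gebar^\perp)$ with its adjoint coboundary $\partial^*$, and use the orthogonal decomposition $C^q=\im\partial\oplus\im\partial^*\oplus\Ker\Delta$ of subsection~\ref{s:harmonic}. In degree one this gives $C^1=\partial C^0\oplus\Ker\partial^*$; as $\partial$ and $\partial^*$ preserve the grading, intersecting with $\phi^1$ yields $\phi^1\Hom(\g_-,\gl(\V)/\gebar)=\partial\phi^1(\gl(\V)/\gebar)\oplus W$ with $W=\phi^1\Ker\partial^*$, which is the splitting required by (C2). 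The point that needs genuine care --- and which I expect to be the main obstacle --- is the $\bar G^0$-invariance of $W$: the positive-definite scalar products of subsection~\ref{parabolic} entering the definition of $\partial^*$ are preserved only by a maximal compact subgroup, not by all of $\bar G^0$. Here one invokes Kostant's remark that, after identifying $\g_-^*$ with $\g_+=\bigoplus_{p>0}\g_p$ by means of $\Tr$, the operator $\partial^*$ agrees, up to a positive scalar on each isotypic component, with the Chevalley--Eilenberg boundary of the Lie algebra homology $H_*(\g_+,\gl(\V)/\gebar)$, and the latter is manifestly equivariant for $\pf=\g_0\oplus\g_+$ and for $Z(\g)$. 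Since $\bar G^0$ acts on all the spaces in question through the parabolic subgroup $P$ and the center $Z(\g)$ (the latter acting trivially by conjugation), it follows that $\Ker\partial^*$, hence $W$, is $\bar G^0$-invariant; so (C2) holds.

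Granting (C0), (C1) and (C2), Theorem~\ref{thm:reduction} produces, canonically from $\varphi$, a $W$-normal extrinsic Cartan connection $\om\colon TP\to\gl(\V)$ on the principal $\bar G^0$-bundle $P\to M$; by construction $\chi$ takes values in $W=\phi^1\Ker\partial^*$, i.e.\ $\partial^*\chi=0$, so this Cartan connection is normal. Uniqueness is part of the same construction: once $\gebar'$ and $W$ are fixed, the tower $Q(-1)\supset Q(0)\supset Q(1)\supset\cdots$ involves no choices --- $Q(0)$ is the canonical reduction on which $\gr\om_z(\gr\f_x)=\g_-$ and each $Q(k)=\{z\in Q(k-1):\chi(k-1)^{(k)}(z)\in W^{(k)}\}$ is forced --- and any $W$-normal extrinsic Cartan connection inducing $\varphi$ is a reduction trapped inside this tower, hence equals $(P,\om)$. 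The cohomology group $H^1_+(\g_-,\gl(\V)/\gebar)$ controls not this uniqueness but the range of possible structure functions $\chi$ as $\varphi$ varies over osculating maps of type $(\g_-,\V)$, which is the source of the rigidity theorems.
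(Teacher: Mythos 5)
Your proposal is correct and follows essentially the same route as the paper: the paper's own proof consists precisely of setting $\gebar'=\gebar^\perp$ and $W=\phi^1\Ker\partial^*$ and then invoking Theorem~\ref{thm:reduction}, with conditions (C0)--(C2) supplied by the orthogonal decomposition and Hodge theory of subsection~\ref{s:harmonic}. You additionally spell out the one point the paper leaves implicit --- the $\bar G^0$-invariance of $\Ker\partial^*$ via the identification of $\partial^*$ with the $\pf$-equivariant Kostant codifferential --- which is a genuine and correctly handled refinement rather than a deviation.
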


\subsection{Vanishing of cohomology groups and rigidity}

According to corollary of Theorem~\ref{thm:reduction}, the structure function $\chi=\sum_{k\geq 1}\chi_k\colon P\to
\text{Hom}\left(\g_-,\bar\g^{\perp}\right)_k$ is a \emph{complete invariant} of the embedding $\varphi$.

Using~Section~\ref{s:harmonic}, we identify $H^1\left(\g_{-}, \bar\g^{\perp}\right)$ with $\ker \Delta = \ker \p\cap \ker\p^*$ and consider $H\chi$ as an element of $H^1\left(\g_{-}, \bar\g^{\perp}\right)$. It is the \emph{fundamental invariant} of the embedding $\varphi$ in the following sense.
\begin{prop}%\label{prop8}
We have $\chi=0$ if and only if $H\chi = 0$. And in this case the embedding $\varphi$ is locally equivalent to the standard embedding of type $(\g_-,\V){:}$
\[
\varphi_{{\rm model}}\colon\ G/G^0 \to L/L^0 \subset
\Flag(\V,\phi).
\]
\end{prop}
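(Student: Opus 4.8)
The plan is to establish the equivalence $\chi=0\iff H\chi=0$ by an induction on degree, feeding on the recursive structure of the structure equation from Section~\ref{sec4} together with the normality condition, and then to read off the last assertion from the Corollary of Theorem~\ref{thm:reduction}. The implication $\chi=0\Rightarrow H\chi=0$ is trivial since $H$ is a linear projection, so the real content is the converse.

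First I would record the two ingredients for the induction. Write $\chi=\sum_{k\ge1}\chi_k$ with $\chi_k$ valued in $\Hom(\g_-,\bar\g^\perp)_k$. Since the extrinsic Cartan connection in question is normal, $\partial^*\chi_k=0$ for all $k$; and by the analysis of Section~\ref{sec4} (the recursion $(k)_7$ in Subsection~\ref{ss43}, equivalently the relation $\partial\chi_k=\Psi_k$ of the main theorem) the coboundary $\partial\chi_k$ is a differential polynomial $\Psi_k$ in $\chi_1,\dots,\chi_{k-1}$ and their derivatives, with $\Psi_1=0$ and with no constant term --- the latter because $\varphi_{\text{model}}$ itself has $\chi\equiv0$ and obeys the same structure equation, forcing $\Psi_k(0,\dots,0)=\partial 0=0$. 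Finally, $\partial$, $\partial^*$ and $\Delta$ all preserve the grading of the cochain complex, so the decomposition $C^1=\im\partial\oplus\im\partial^*\oplus\ker\Delta$ of Subsection~\ref{s:harmonic} is graded; hence $H$ preserves degree and $H\chi=0$ implies $H\chi_k=0$ for every $k$.

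Then I would run the induction. For $k=1$: $\partial\chi_1=\Psi_1=0$ and $\partial^*\chi_1=0$, so $\chi_1\in\ker\partial\cap\ker\partial^*=\ker\Delta$; a harmonic cochain equals its own harmonic part, hence $\chi_1=H\chi_1=0$. For the inductive step, assuming $\chi_1=\dots=\chi_{k-1}=0$, all their derivatives vanish too, so $\partial\chi_k=\Psi_k(\chi_1,\dots,\chi_{k-1},\dots)=0$, and with $\partial^*\chi_k=0$ again $\chi_k\in\ker\Delta$, whence $\chi_k=H\chi_k=0$. As $\V$ is finite dimensional, the induced grading of $\gl(\V)$, and therefore $\chi$, has only finitely many homogeneous components, so the induction terminates and $\chi=0$. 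For the remaining claim, once $\chi=0$ the Corollary of Theorem~\ref{thm:reduction} gives that $\varphi$ is locally $L$-equivalent to the standard model of type $(\g_-,\V)$, which here --- using Lemma~\ref{symalg}, so that $\bar\g=\g+Z(\g)$ --- is precisely $\varphi_{\text{model}}\colon G/G^0\to L/L^0\subset\Flag(\V,\phi)$.

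I expect the only genuine subtleties to be the degree bookkeeping just invoked (that $\Psi_k$ has no constant term and involves no occurrence of $\chi_k$ itself, so that the induction closes), and the check that the hypotheses $(\mathrm{C0})$, $(\mathrm{C1})$, $(\mathrm{C2})$ of Theorem~\ref{thm:reduction} really do hold for $\bar\g'=\bar\g^\perp$ and $W=\phi^1\ker\partial^*$ --- but this last point is exactly the graded Hodge decomposition already recalled in Subsection~\ref{s:harmonic}. Everything else is a routine unwinding of that decomposition.
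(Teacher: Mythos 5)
Your proof is correct and follows essentially the same route as the paper: an induction on the degree $k$ using the recursion $\partial\chi_{k+1}=\Psi_{k+1}(\chi^{(k)},D\chi^{(k)})$ from equation~(\ref{eq:chi}) together with the normality condition $\partial^*\chi=0$ to conclude $\chi_{k+1}\in\ker\Delta$, hence $\chi_{k+1}=H\chi_{k+1}$. The extra bookkeeping you supply (gradedness of the Hodge decomposition, vanishing of the constant term of $\Psi_k$, appeal to the Corollary of Theorem~\ref{thm:reduction} for the final assertion) is exactly what the paper leaves implicit.
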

\begin{proof}
Indeed, assume that for some $k\ge 0$ we have $\chi_i = 0$ for all $i\le k$.
Then equation~(\ref{eq:chi}) from the proof of Theorem~\ref{thm:reduction} implies also that $\partial \chi_{k+1}=0$. Since $\chi$ is normal, we also have $\partial^*\chi_{k+1}=0$. So, $\chi_{k+1}=H\chi_{k+1}$. By induction, we get that $\chi=0$ if and only if $H\chi=0$.
\end{proof}

In particular, if the cohomology spaces $H^1_{r}\big(\g_{-}, \bar\g^{\perp}\big)$ vanish identically for all $r\ge 1$, then any embedding $\varphi$ of type $(\g_-,\V)$ is locally equivalent to the standard one. We say in this case that the standard embedding $\varphi_{{\rm model}}$ is \emph{rigid}.

So, we arrive at a natural question, for which irreducible representations the positive part of the cohomology $H^1_+\left(\g_{-}, \bar\g^{\perp}\right)$ does not vanish. By the following proposition, the only rational homogeneous varieties of simple Lie groups, which might be non-rigid are limited to $(i)$ projective spaces,
$(ii)$ quadrics, $(iii)$ $A$-type (Lagrange) contact spaces,
$(iv)$ $C$-type (projective) contact spaces.

\begin{prop}\label{thm3} Let $\g = \bigoplus \g_p $ be a simple graded Lie algebra and $U$ an irreducible submodule of the $\g$-module $\gl(\V)$. The cohomology group $H^1_r(\g_-, U)$ vanishes for $r \ge 1$ except for the following cases$:$
\begin{enumerate}\itemsep=0pt
\item[$1.$] $(A_3,\Sigma)$ with $\Sigma=\{\alpha_2\}$,

$(A_l, \Sigma) \ (l\ge 1 )$ with $\Sigma = \{\alpha_1\}$, $\{\alpha_l\}$, or $\{\alpha_1, \alpha_l\}$,

\item[$2.$] $(B_2, \Sigma)$ with $\Sigma = \{\alpha_1\}$, or $\{\alpha_2 \}$,

$(B_l, \Sigma) \ (l\ge 3 )$ with $\Sigma = \{\alpha_1\}$,

\item[$3.$] $(C_l, \Sigma) \ (l\ge 3 )$ with $\Sigma = \{\alpha_1\}$,

\item[$4.$] $(D_4, \Sigma) $ with $\Sigma = \{\alpha_1\}$, $\{\alpha_3 \}$, or $\{\alpha_4 \}$,

$(D_l, \Sigma) \ (l\ge 5 )$ with $\Sigma = \{\alpha_1\}$,
\end{enumerate}
	where $\Sigma$ denotes the subset of the simple roots $\{\alpha_1, \dots, \alpha_l\}$
	which defines the gradation of $\g$, that~is, $\g_{\alpha} \subset \g_p$ for
	$\alpha = a_1\alpha_1 + \cdots + a_l\alpha_l$
	if
	$ \sum	_{\alpha_i \in \Sigma} a_i	= p	$.
\end{prop}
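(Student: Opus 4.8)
The plan is to reduce the whole computation to Kostant's theorem and then to a short combinatorial check over the simple graded Lie algebras. First I would pin down which modules actually matter: the relevant $U$ are the irreducible constituents of $\bar\g^{\perp}=\gl(\V)\ominus\g\ominus Z(\g)$ (for $U=Z(\g)=\C$ one has $H^1_+(\g_-,U)=\g_+\neq0$ for every grading, so genuinely non-trivial, non-adjoint $U$ are meant). Since $\g_{-}$ is the nilradical of the opposite parabolic $\g_-\oplus\g_0$, Kostant's theorem~\cite{kost} describes $H^1(\g_-,U)$, for $U$ irreducible with highest weight $\lambda$, as a $\g_0$-module with exactly one irreducible summand for each $\al\in\Sigma$ (the length-one elements of the relevant set of minimal coset representatives are the reflections $s_\al$, $\al\in\Sigma$), and that summand is homogeneous of a single degree
\[
 r_\al(\lambda)=\langle\lambda+\rho,\al^\vee\rangle-\langle\lambda,E\rangle=\langle\lambda,\al^\vee\rangle+1-\langle\lambda,E\rangle ,
\]
where $E\in\g_0$ is the grading element, normalised by $\langle\al_j,E\rangle=1$ if $\al_j\in\Sigma$ and $0$ otherwise; the sign in $r_\al$ is fixed by a direct computation (e.g.\ the $\sll(2)$ prototype). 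Consequently $H^1_r(\g_-,U)\neq0$ for some $r\ge1$ if and only if $\langle\lambda,\al^\vee\rangle\ge\langle\lambda,E\rangle$ for some $\al\in\Sigma$.

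Next I would determine the admissible $\lambda$. As all weights of $\gl(\V)=\V\otimes\V^{*}$ lie in the root lattice, every constituent has $\lambda$ in the root lattice; conversely, any dominant $\lambda$ in the root lattice has $0$ among its weights and therefore occurs inside $V_\mu\otimes V_\mu^{*}$ for $\mu$ deep in the Weyl chamber, and for $\lambda$ large the corresponding $V_\lambda$ is automatically non-trivial and non-adjoint, hence a constituent of $\bar\g^{\perp}$ for a suitable faithful irreducible $\V$. Writing $\lambda=\sum_j n_j\varpi_j$ and $\langle\lambda,E\rangle=\sum_j n_j\langle\varpi_j,E\rangle$, and noting that $\langle\varpi_j,E\rangle=\sum_{\al_i\in\Sigma}\langle\varpi_j,\varpi_i^\vee\rangle>0$ for all $j$ (every entry of the inverse Cartan matrix being positive), the criterion $n_\al\ge\langle\lambda,E\rangle$ becomes, for the witnessing $\al=\al_\beta\in\Sigma$, the inequality $n_\beta\bigl(1-\langle\varpi_\beta,E\rangle\bigr)\ge\sum_{j\neq\beta}n_j\langle\varpi_j,E\rangle\ge0$. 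If $\langle\varpi_\beta,E\rangle>1$ for every $\beta\in\Sigma$ this forces $\lambda=0$, which is excluded, so $H^1_{+}(\g_-,U)=0$ for all admissible $U$; if $\langle\varpi_\beta,E\rangle\le1$ for some $\beta\in\Sigma$, then $\lambda=n\varpi_\beta$ with $n$ a large multiple of the order of $\varpi_\beta$ modulo the root lattice gives $r_\beta(\lambda)=n\bigl(1-\langle\varpi_\beta,E\rangle\bigr)+1\ge1$, an honest non-vanishing class. Thus the proposition is equivalent to the combinatorial assertion: $(\g,\Sigma)$ is an exception exactly when $\sum_{\al_i\in\Sigma}\langle\varpi_\beta,\varpi_i^\vee\rangle\le1$ for some $\beta\in\Sigma$.

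Finally I would run through the classification of simple graded Lie algebras. For a one-element grading $\Sigma=\{\al_\beta\}$ the quantity above is $\langle\varpi_\beta,\varpi_\beta^\vee\rangle$, and a look at the inverse Cartan matrices shows this is $\le1$ exactly for the end nodes $\al_1,\al_l$ of $A_l$ (value $l/(l+1)$), for $\al_2$ in $A_3$ (value $1$), for $\al_1$ in $B_l$, $C_l$ and $D_l$ (value $1$), for the spinor nodes of $D_4$ (value $1$), and for the short node of $B_2$ (value $1$); for every grading of $E_6,E_7,E_8,F_4,G_2$ one has $\langle\varpi_\beta,\varpi_\beta^\vee\rangle>1$ for all $\beta$. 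For $\Sigma$ with two or more nodes the sum only increases, with the single exception $\Sigma=\{\al_1,\al_l\}$ in $A_l$, where $\langle\varpi_1,\varpi_1^\vee\rangle+\langle\varpi_1,\varpi_l^\vee\rangle=1$. Collecting these and rewriting the low-rank coincidences ($D_3\cong A_3$, $B_2\cong C_2$, triality of $D_4$) in the node labelling of the statement yields precisely the families (i)--(iv); the non-vanishing is witnessed, respectively, by Veronese powers of projective space, by the (isotropic) vector representation of a quadric (so that $\bar\g^{\perp}$ is the module of harmonic quadratics), and by the ambient/adjoint representation for the two contact families.

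I expect the principal difficulty to be organisational rather than conceptual: one must be sure that no admissible weight is overlooked for the non-listed $(\g,\Sigma)$, which calls for care with (a) the exact shape of $r_\al(\lambda)$ coming from Kostant for the \emph{opposite} nilradical (the sign is the easy thing to get wrong) and (b) the precise set of $\g$-modules occurring inside $\gl(\V)$ for a faithful irreducible $\V$ --- it is exactly the root-lattice restriction here that isolates projective spaces, quadrics and the two contact varieties from the remaining rational homogeneous spaces.
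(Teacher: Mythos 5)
Your proposal follows essentially the same route as the paper's proof: Kostant's theorem gives one $\g_0$-component of $H^1(\g_-,U)$ per node of $\Sigma$, its degree is an affine function of the extreme weight of $U$ whose coefficients are (sums of) entries $p_{ij}$ of the inverse Cartan matrix, and the positivity of all $p_{ij}$ reduces the classification to the check $\sum_{\alpha_k\in\Sigma}p_{ik}\le 1$ for some $\alpha_i\in\Sigma$, which is exactly the paper's criterion; your resulting list agrees. One caveat, precisely the sign issue you flagged: with the standard conventions your formula $r_\al(\lambda)=\langle\lambda,\al^\vee\rangle+1-\langle\lambda,E\rangle$, evaluated on the \emph{highest} weight $\lambda$ of $U$, computes the degrees of $H^1(\g_-,U^*)$ rather than of $H^1(\g_-,U)$ --- the paper works with the lowest weight $\mu$ and obtains $\langle\mu,E\rangle-\langle\mu,\al^\vee\rangle+1$. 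For instance, for $\g=\sll(3)$, $\Sigma=\{\alpha_1\}$ and $U=V_{\omega_1}$ the standard representation, a direct computation puts $H^1$ in degree $2/3$, while your formula gives $4/3$ (the value for $V_{\omega_2}$). This slip is harmless for the classification, since $\gl(\V)$ is self-dual so $U$ and $U^*$ occur together and the combinatorial criterion is unchanged, but it would matter if one wanted the degree attached to a specific module (as in Proposition~\ref{prop7}). Your root-lattice/realizability discussion is a genuine addition: the paper's proof only establishes the vanishing direction for arbitrary nontrivial irreducible $U$ and defers the non-triviality of the listed cases to later sections.
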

\begin{proof}
	Let $\mu$ be the lowest weight of $U$.	It is written as a linear combination of the fundamental weights: $\mu = \sum_{i=1}^l \mu_i\omega_i$ with non-positive integers $\mu_i$, where $\omega_i$ is the $i$-th fundamental weight.
	By the theorem of Kostant~\cite{kost}, we have
	\[
	H^1(\g_- , U) = \bigoplus _{\alpha \in \Sigma}L^{\g_0}(-\sigma_{\alpha}.(-\mu)),
	\]
	where $L^{\g_0}(-\sigma_{\alpha}.(-\mu))$ denotes a $\g_0$-irreducible module of lowest weight:
	\[
	-\sigma_{\alpha}.(-\mu) = -(\sigma_{\alpha}(-\mu + \rho ) -\rho)
	= \sigma_{\alpha}(\mu - \rho ) +\rho,
	\]
	$\sigma_{\alpha}$ being the reflection with respect to $\alpha $ and
	$\rho = \omega_1 + \cdots + \omega_l $.
	Since $\sigma_{\alpha_i}(\omega_j) = \omega_j - \delta_{ij}\alpha_i$,
	we~have
	\[
	\sigma_{\alpha_i}(\mu - \rho ) +\rho = \sigma_{\alpha_i}(\mu ) +\alpha_i.
	\]
	
	Now let us compute the degree of this weight. Since the grading of $U$ is determined by the eigenvalues of the grading element $E\in\g$, the degree of $\lambda = \sum \lambda_j \alpha_j$ is defined as
	\[
	\deg(\lambda) = \sum _{\alpha_i \in \Sigma}\lambda_i.
	\]
	For $\alpha_i \in \Sigma$ we have
	\[
	\deg(\sigma_{\alpha_i}(\mu ) +\alpha_i) = \deg(\sigma_{\alpha_i}(\mu )) + 1 .
	\]
	Let $C^{-1}=(p_{ij})$ be the inverse of the Cartan matrix $C$. Note that its components $p_{ij}$ are all positive. We have
	\[
	\sigma_{\alpha_i}(\mu ) = \sigma_{\alpha_i}\bigg(\sum_{j=1}^l \mu_j \omega_j \bigg) = \sum_{j=1}^l \mu_j \omega_j - \mu_i\alpha_i
	= \sum_{j,k=1}^l \mu_j p_{jk} \alpha_k - \mu_i\alpha_i.
	\]
	Therefore we have
	\[
		\deg (\sigma_{\alpha_i}(\mu )) = \sum _{j=1}^l\mu_j\sum_{k'} p_{jk'} - \mu_i = \sum _{j\neq i}\mu_j \sum_{k'}p_{jk'} + \mu_i\bigg(\sum_{k'} p_{ik'}-1\bigg),
	\]
	where the summation $\sum_{k'}$ is taken over all $k'$ such that $\alpha_{k'} \in \Sigma$.
	We then deduce from this that if $(\g, \Sigma)$ has the following property:
	\[
	\sum _{\alpha_{k} \in \Sigma} p_{ik}-1 > 0 \qquad \text {for all}\quad i\quad
	\text{such that}\quad \alpha _i \in \Sigma,
	\]
	then
	$H^1_r(\g_-, V) = 0 $ for $r \ge 1$.
	Now the theorem follows immediately from the table of $(p_{ij})$
	as found, for instance, in Humphreys' book~\cite[p.~69]{humph}.
\end{proof}

Consider now the cases from Proposition~\ref{thm3} in more detail. Let $\pf\subset\g$ be the parabolic subalgebra in the complex simple Lie algebra that corresponds to one of the cases in Proposition~\ref{thm3}. Considering $\pf$ up to $\Aut(\g)$, is equivalent to treating $\Sigma$ up to the automorphism of the Dynkin diagram of the corresponding root system. Taking this into account, we may reduce the number of cases to one of the following:
\begin{enumerate}\itemsep=0pt
\item $(A_l,\{\alpha_1\})$,\quad $l\ge 1$,
\item $(B_l,\{\alpha_1\})$, \quad$l\ge 2$,
\item $(C_l,\{\alpha_1\})$, \quad$l\ge 2$,
\item $(D_l,\{\alpha_1\})$, \quad$l\ge 4$,
\item $(A_3,\{\alpha_2\})$,
\item $(A_l,\{\alpha_1,\alpha_l\})$, \quad$l\ge 2$.
\end{enumerate}
We use here the identification of $(B_2,\{\alpha_2\})$ with $(C_2,\{\alpha_1\})$.

In the case $(A_l,\{\alpha_1\})$, we get
\[
\deg(\sigma_{\alpha_1}(\mu))=
 \sum_{j=2}^l p_{j1}\mu_j + \mu_1(p_{11}-1) =
 \frac{1}{l+1}\bigg(\sum_{j=2}^l (l+1-j)\mu_j - \mu_1\bigg) \ge 0.
\]
This gives an explicit condition on the lowest weight $\mu$ of the module $U$, such that $H^1(\g_{-},U)$ is concentrated in the positive degree.

In $B_l$, $C_l$, and $D_l$ cases, we note that $p_{11}=1$. So, we get
\[
	\deg(\sigma_{\alpha_1}(\mu)) = \sum_{j=2}^l p_{j1}\mu_j + \mu_1(p_{11}-1)
	= \sum_{j=2}^l p_{j1}\mu_j \ge 0.
\]
Since $\mu_j\le 0$, we immediately get that necessarily $\mu_2=\dots=\mu_l=0$, and under this condition, $H^1(\g_{-1},U)$ is concentrated in degree $1$.

Similarly, in the case $(A_3,\{\alpha_2\})$, we have $p_{22}=1$. So, we get that $H^1(\g_{-1},U)$ is concentrated in positive degree if any only if $\mu_1=\mu_3=0$, and this degree is equal to 1 in this case.

Finally, in the case $(A_l,\{\alpha_1,\alpha_l\})$, we get
\[
	\deg(\sigma_{\alpha_1}(\mu)) =
	\sum_{j=2}^{l} (p_{j1}+p_{jl})\mu_j + \mu_1(p_{11}+p_{1l}-1)
	= \sum_{j=2}^{l} \mu_j \ge 0.
\]
So, again we get $\mu_2=\dots=\mu_l=0$ as the necessary condition for the module $L^{\g_0}(-\sigma_{\alpha_1}.(-\mu))\subset H^1(\g_{-},U)$ to be concentrated in the positive degree, which is equal to 1 in this case.

Similarly, $L^{\g_0}(-\sigma_{\alpha_l}.(-\mu))\subset H^1(\g_{-},U)$ is concentrated in positive degree (equal to $1$) only for modules with the lowest weight $\mu_l\om_l$, $\mu_l\le 0$.

Summing up, we get
\begin{prop}\label{prop7}
	Let $\mu=\mu_1\om_1+\dots+\mu_l\om_l$, $\mu_i\le 0$, be the lowest weight of an irreducible module~$U$ over one of the simple Lie algebras of rank~$l$ from Proposition~$\ref{thm3}$. Suppose $H^1(\g_{-},U)$ has non-trivial components in positive degree. Then
	\begin{enumerate}\itemsep=0pt
		\item[$1.$] In the case $(A_l,\{\alpha_1\})$, we have
		\[
		N = \sum_{j=2}^l (l+1-j)\mu_j - \mu_1 \ge 0,
		\]
 where $N$ is a multiple of $l+1$. Then $H^1(\g_{-},U)$ is an irreducible $\g_0$-module concentrated in degree $N/(l+1)+1$.

\item[$2.$] In the cases $B_l$ $(l\ge 2)$, $C_l$ $(l\ge 2)$, $D_l$ $(l\ge 4)$ and $\Sigma=\{\alpha_1\}$, and for $A_3$, $\Sigma=\{\alpha_2\}$, we~have $\mu_i=0$ unless $\alpha_i\in\Sigma$. Then $H^1(\g_{-},U)$ is an irreducible $\g_0$-module concentrated in degree $1$.

\item[$3.$] In the case $(A_l,\{\alpha_1,\alpha_l\})$, we have $\mu_2=\dots=\mu_l=0$ or $\mu_1=\dots=\mu_{l-1}=0$. Each of these conditions leads to an irreducible $\g_0$-module in $H^1(\g_{-},U)$ concentrated in degree $1$.
	\end{enumerate}
 All other cases from Proposition~{\rm \ref{thm3}} are equivalent to the above via the automorphisms of the Dynkin diagram of the corresponding root system.
\end{prop}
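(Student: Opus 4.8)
The plan is to reduce the statement entirely to Kostant's theorem together with a degree count performed via the inverse Cartan matrix, exactly as in the proof of Proposition~\ref{thm3}, and then to run through the finitely many cases listed there.

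First I would recall from the proof of Proposition~\ref{thm3} that Kostant's theorem~\cite{kost} yields
\[
H^1(\g_-, U) \;=\; \bigoplus_{\alpha_i \in \Sigma} L^{\g_0}\bigl(\sigma_{\alpha_i}(\mu) + \alpha_i\bigr),
\]
so $H^1(\g_-, U)$ is a direct sum of $|\Sigma|$ irreducible $\g_0$-modules, the one attached to $\alpha_i$ being concentrated in degree $\deg(\sigma_{\alpha_i}(\mu)) + 1$. Since $U$ is a graded submodule of $\gl(\V)$, its grading is inherited from the integral grading of $\gl(\V)$; hence each of these degrees is an integer, and a summand lies in positive degree precisely when $\deg(\sigma_{\alpha_i}(\mu)) \ge 0$. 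Using $\sigma_{\alpha_i}(\omega_j) = \omega_j - \delta_{ij}\alpha_i$ and writing $C^{-1} = (p_{ij})$ for the inverse Cartan matrix, whose entries are all positive, I would re-derive, as in the text,
\[
\deg(\sigma_{\alpha_i}(\mu)) \;=\; \sum_{j \ne i} \mu_j \sum_{\alpha_k \in \Sigma} p_{jk} \;+\; \mu_i\Bigl( \sum_{\alpha_k \in \Sigma} p_{ik} - 1 \Bigr);
\]
since every $\mu_j \le 0$, the sign of this expression is then governed by the scalars $\sum_{\alpha_k \in \Sigma} p_{ik} - 1$ and by which entries $p_{jk}$ occur.

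Next I would specialize to the six representative cases. For $(A_l,\{\alpha_1\})$, substituting $p_{j1} = (l+1-j)/(l+1)$ gives $\deg(\sigma_{\alpha_1}(\mu)) = N/(l+1)$ with $N = \sum_{j=2}^l (l+1-j)\mu_j - \mu_1$; integrality of the degree forces $(l+1) \mid N$, the positivity hypothesis forces $N \ge 0$, and the unique summand sits in degree $N/(l+1)+1$. For $B_l$, $C_l$, $D_l$ with $\Sigma=\{\alpha_1\}$ and for $(A_3,\{\alpha_2\})$ one reads off from the standard tables (see, e.g., \cite[p.~69]{humph}) that $\sum_{\alpha_k\in\Sigma}p_{ik}=1$ at the relevant node, so the $\mu_i$-term vanishes and $\deg(\sigma_{\alpha_i}(\mu)) = \sum_{j\ne i} p_{ji}\mu_j \le 0$; positivity then forces $\mu_j = 0$ for every $j$ with $\alpha_j \notin \Sigma$, whence this degree is exactly $0$ and $H^1(\g_-,U)$ sits in degree $1$. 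For $(A_l,\{\alpha_1,\alpha_l\})$ one uses $p_{j1}+p_{jl}=1$ for all $j$, together with $p_{11}+p_{1l}=p_{ll}+p_{l1}=1$, so the two Kostant summands sit in degrees $\sum_{j=2}^l \mu_j + 1$ and $\sum_{j=1}^{l-1}\mu_j + 1$ respectively; the first is positive iff $\mu_2=\dots=\mu_l=0$, the second iff $\mu_1=\dots=\mu_{l-1}=0$, and in each case the surviving summand lands in degree $1$. Finally I would note that every remaining case from Proposition~\ref{thm3} is carried to one of these six by an automorphism of the Dynkin diagram (the flip $\alpha_i \mapsto \alpha_{l+1-i}$ for $A_l$, triality for $D_4$, and so on), and such automorphisms preserve both the Kostant decomposition and the degrees, so they require no separate treatment.

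There is no obstacle of principle: once Kostant's theorem is available the whole argument is a termwise sign analysis. The only genuine work is assembling the correct explicit entries of $C^{-1}$ for each of $A_l, B_l, C_l, D_l$, and, in the $(A_l,\{\alpha_1,\alpha_l\})$ case, keeping careful track of which of the two summands of $H^1(\g_-,U)$ supplies the positive-degree component under each of the two alternatives.
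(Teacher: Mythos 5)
Your proposal is correct and follows essentially the same route as the paper: Kostant's theorem gives $H^1(\g_-,U)$ as a sum of $|\Sigma|$ irreducible $\g_0$-modules, and the degree of each summand is computed from $\sigma_{\alpha_i}(\mu)+\alpha_i$ via the positive entries of the inverse Cartan matrix, with the sign analysis and the reduction to six representative cases by Dynkin-diagram automorphisms exactly as in the text. The only cosmetic difference is that the paper phrases the weights as $-\sigma_\alpha.(-\mu)$ before simplifying to $\sigma_{\alpha_i}(\mu)+\alpha_i$; the case-by-case computations agree.
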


\subsection{Simple Lie algebras of rank 2 and their adjoint representations}
\label{rank2ex}

In view of Propositions~\ref{thm3} and~\ref{prop7} let us consider, as first examples of extrinsic parabolic geometry on non-trivial filtered manifolds,
the following simple Lie algebras of rank $2$ with contact type gradings and their adjoint representations:
\begin{enumerate}\itemsep=0pt
	\item[$(i)$] $(A_2, \{ \alpha_1, \alpha_2\})$, $\mu=\omega_1+\omega_2$,
	\item[$(ii)$] $(C_2, \{ \alpha_1\})$, $\mu=2\omega_1$,
	\item[$(iii)$] $(G_2,\{ \alpha_2\})$, $\mu=\omega_2$,
\end{enumerate}
where $\{ \alpha _i \}$ indicates the grading, and $\mu$ is the highest weight of the adjoint representation.

Throughout this subsection $\g$ ($= \oplus\g_p$) will denote one of the above simple Lie algebras with a contact type grading, that is, the negative part $\g_-$ of $\g$ is a Heisenberg Lie algebra, or more precisely, $\g_- = \g_{-2} \oplus \g_{-1}$, $\dim \g_{-2} = 1 $, and the bracket $\g_{-1} \times \g _{-1} \to \g_{-2}$
is non-degenerate. In~the cases $(i)$, $(ii)$ $\dim \g_- = 3$ and $5$ in the case $(iii)$.

The representation $\rho\colon \g \to \mathfrak{gl}(V)$ that we consider here is the adjoint representation, so that $V = \g$, while we give a grading $\oplus V_i $ on $V$ by shifting: $V_i = \g_{i-2}$, to adjust to the usual orders of related differential equations. We fix a filtration $\phi$ of $V$ by setting $\phi^i = \oplus_{p \ge i }V_p$.

In each case we are going to inspect an osculating map
\[
\varphi \colon\ (M, \mathfrak f ) \to L/ L^0 \subset \Flag(V,\phi)
\]
and the associated differential equation of symbol type $(\g_-, V, L)$ with a suitable choice of a Lie subgroup $L$ of ${\rm GL}(V)$. Note that the filtered manifold $(M, \mathfrak f)$ has $\g_-$ as symbol, and therefore is a contact manifold of dimension $3$ or $5$.

Consider in detail case $(ii)$. We have $\g=\mathfrak{sp}(U)$, where $U$ is a 4-dimensional vector space equipped with a non-degenerate symplectic form. We endow $\g$ with a contact type grading so that
\[
	\dim \g_{\pm 2} = 1,\qquad \dim \g_{\pm1}=2,\qquad \dim\g_0=4.
\]
As already said, $V = \g$ with $V_i = \mathfrak g_{i-2}$ as a graded $\g$ module. We assume also that $G = {\rm Sp}(U)$.

Consider an osculating map
\[
	\varphi\colon\ (M, \mathfrak f) \to L/L^0 \subset \Flag(V, \phi)
\]
for a Lie subgroup $L$ of ${\rm GL}(V)$ containing $G$. Then it is generated by the projective embedding
\[
	\varphi^4 \colon\ (M, \mathfrak f ) \to P(V).
\]
In particular the standard model embedding
\[
	\varphi_{\mathrm{model}} \colon\ G/G^0 \to \Flag(V, \phi)
\]
is generated by the projective embedding
\[
	\varphi^4_{\mathrm{model}} \colon\ G/G^0 \to P(V),
\]
which may be called the contact Veronese embedding: It is isomorphic to the following projective embedding
\[
	P(U) \to P\big(S^2(U)\big).
\]
Recall here that $\mathfrak{sp}(U)$ may be identified with $S^2(U)$.

As a choice of subgroup $L\subset G$ we can take $O(V)$ ($=O(V,\kappa)$), where $\kappa$ is the Killing form of~$\g$. So, we have natural embeddings
\[
G\subset O(V)\subset {\rm GL}(V), \qquad \g\subset \mathfrak{o}(V,\kappa)\subset \mathfrak{gl}(V).
\]
Note that in the real case we have $\mathfrak{o}(V,\kappa)=\mathfrak{o}(6,4)$.

It is easy to check that the relative prolongations of $\g_{-}$ in $\mathfrak{o}(V)$ and $\gl(V)$ coincide with $\g$ and $\bar\g=\g+\mathfrak{z}$ respectively.

Let us compute cohomology groups $H^1_+(\g_-, \mathfrak{o}(V)/\g)$ and $H^1_+\left(\g_-, \gl(V)/\bar\g\right)$. In~terms of $\g$-modules, we have the irreducible decomposition
\[
\mathfrak{gl}(V)=\mathfrak{gl}(10)=\Gamma_{0,0}\oplus \Gamma_{0,1}
\oplus \Gamma_{2,0}\oplus \Gamma_{0,2}\oplus \Gamma_{2,1}\oplus \Gamma_{4,0}.
\]
Here $\mathfrak{l}=\mathfrak{o}(6,4)=\Gamma_{2,0}\oplus \Gamma_{2,1}$.
For the relative prolongation we have $\Prol(\mathfrak{g}_-,\mathfrak{o}(6,4))=\mathfrak{g}$, so
\[
\bar\g=\g=\Gamma_{2,0}=\mathfrak{sp}(4), \qquad
\bar\g^{\perp}=\Gamma_{2,1}.
\]
Then by Proposition~\ref{prop7}, we have no fundamental invariants of $\varphi$:
\[
H^1_+\big(\mathfrak{g}_-, \mathfrak{g}^{\perp}\big)=0.
\]

On the other hand, if we take $\mathfrak{l}$ to be the whole $\mathfrak{gl}(10)$, the relative prolongation of $\g_{-}$ becomes $\Prol(\g_-,\mathfrak{gl}(10))=\g\oplus\mathfrak{z}=\bar\g$.
Then
\[
H^1_+\big(\mathfrak{g}_-,\bar\g^{\perp}\big)=H^1_1(\mathfrak{g}_-,\Gamma_{4,0})
=S^5\mathbb{R}^2\qquad (\dim=6).
\]
It corresponds to the highest weight $-\alpha_1+2\alpha_2$ (and the lowest weight $\alpha_1-2\alpha_2$).

The model equation can be written in this case as
\[
\begin{cases}
X^3u =0, \\
XYXu =0, \\
YXYu =0, \\
Y^3u =0
\end{cases}
\]
with
\[
X=\frac{\partial}{\partial x}+\frac12y\frac{\partial}{\partial z}, \qquad
Y=\frac{\partial}{\partial y}-\frac12x\frac{\partial}{\partial z}
\]
for a contact coordinate system $(x,y,z)$ of $G/G^0$: The contact distribution $D$ is defined by a~contact form
\[
\theta={\rm d}z-\frac12(y{\rm d}x-x{\rm d}y),
\]
therefore $X$, $Y$ are sections of $D$.

Now consider its deformed version
\begin{equation*}\tag{$\E_a$}
\begin{cases}
X^3u=aY^2u,\\
XYXu=0,\\
YXYu=0,\\
Y^3u=0,
\end{cases}	
\end{equation*}
where $a$ is an arbitrary constant. We can verify that it still has a $10$-dimensional solution space spanned by
\[
\begin{cases}
1,\ x,\ y,\ z,\ x^2,\ xy,\ xz, \\[1ex]
y^2+ \dfrac{a}{3}x^3, \\[1.5ex]
yz-\dfrac{a}{24}x^4, \\[1.5ex]
z^2+\dfrac{a}{120}x^5,
\end{cases}
\]
which defines an explicit (local) deformation of the standard embedding.

Let us prove that this deformation is non-trivial.
\begin{prop}\label{deformCase2}
	If $a\ne 0$, then $(\E_a)$ is not locally equivalent to $(\E_0)$ as linear differential equations.
\end{prop}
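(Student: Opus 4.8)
The plan is to detect the nonequivalence by the fundamental invariant of the extrinsic geometry. Since $(\E_a)$ has a $10$-dimensional solution space for every $a$, and the extra term $aY^2u$ is of lower weighted order than $X^3u$ (so that the symbol is unchanged), $(\E_a)$ is an $L/L^0$ differential equation of constant symbol type $(\g_-,V,GL(V))$ on the contact $3$-manifold $(M,\f)$, and hence corresponds to an osculating map $\varphi_a\colon(M,\f)\to\Flag(V,\phi)$ of that type, with $\varphi_0$ (equivalent to) the standard embedding $\varphi_{\text{model}}$. By Theorem~\ref{thm:reduction} together with Proposition~\ref{prop8}, $(\E_a)$ and $(\E_0)$ are locally equivalent if and only if the normalized structure function $\chi^{(a)}$ of $\varphi_a$ vanishes identically, if and only if $H\chi^{(a)}=0$. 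By the computation preceding this proposition (using Propositions~\ref{thm3} and~\ref{prop7}) we have
\[
H^1_+(\g_-,\gl(V)/\bar\g)=H^1_1(\g_-,\Gamma_{4,0})\cong S^5\mathbb{R}^2,
\]
concentrated in degree $1$. Feeding this into the inductive argument of Proposition~\ref{prop8} — if $\chi^{(a)}_i=0$ for $i\le k$ then equation~\eqref{eq:chi} gives $\partial\chi^{(a)}_{k+1}=0$, normality gives $\partial^*\chi^{(a)}_{k+1}=0$, hence $\chi^{(a)}_{k+1}$ is harmonic and therefore $0$ as soon as $k+1\ge2$ — we see that $\chi^{(a)}=0$ if and only if its degree-one component $\chi^{(a)}_1\in H^1_1(\g_-,\Gamma_{4,0})$ vanishes. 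Thus everything reduces to showing $\chi^{(a)}_1\neq 0$ for $a\neq 0$.

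Next I would use a scaling symmetry to reduce to arbitrarily small $a$. The diagonal dilation $\Phi_{\lambda,\mu}\colon(x,y,z)\mapsto(\lambda x,\mu y,\lambda\mu z)$ satisfies $\Phi_{\lambda,\mu}^*\theta=\lambda\mu\,\theta$, so it is a contact transformation preserving the splitting $D=E\oplus F$, and $(\Phi_{\lambda,\mu})_*X=\lambda X$, $(\Phi_{\lambda,\mu})_*Y=\mu Y$; since every equation of $(\E_a)$ other than $X^3u=aY^2u$ is homogeneous in the operators, $\Phi_{\lambda,\mu}$ transforms $(\E_a)$ into $(\E_{a\mu^2/\lambda^3})$. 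As $\mu^2/\lambda^3$ runs over all nonzero reals, all the equations $(\E_a)$ with $a\neq 0$ are mutually locally equivalent; in particular $\chi^{(a)}_1$ is either nonzero for every $a\neq 0$ or else identically zero in $a$. Since the normalized structure function depends polynomially (in particular analytically) on $a$, it now suffices to check that $\dfrac{d}{da}\chi^{(a)}_1\Big|_{a=0}\neq 0$.

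Finally I would compute this derivative at the symbol level. Realizing $(\E_a)$ as a first-order companion system $\nabla\eta=0$ for a $V$-valued unknown in the coframe dual to $\{X,Y,Z\}$, the connection form differs from that of the flat model $(\E_0)$ by a single extra term, linear in $a$: replacing $X^3u=0$ by $X^3u=aY^2u$ contributes to $\nabla_X$ a summand $aB$ with $B\in\gl_0(V)$, namely (under the companion identifications) the map carrying the $X^2u$-slot to the $Y^2u$-slot. Passing to $Q(0)$ and decomposing $\om=\om_I+\om_{II}$ along $\gl(V)=\bar\g\oplus\bar\g'$, this produces a contribution $a\,c+O(a^2)$ to $\chi^{(a)}_1$, where $c\in\Hom(\g_{-1},\bar\g'_0)\subset C^1_1(\g_-,\gl(V)/\bar\g)$ is the explicit cochain determined by the $\bar\g'$-component of $B$. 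It then remains to verify that the cohomology class of $c$ is nonzero, equivalently that its harmonic projection $Hc$ is a nonzero element of $S^5\mathbb{R}^2$. This is a finite linear-algebra verification, performed with computer assistance; the delicate point — and the only real obstacle in the argument — is to carry out correctly the identifications $\bar\g'\cong\gl(V)/\bar\g$ and $H^1_1(\g_-,\Gamma_{4,0})\cong S^5\mathbb{R}^2$ (the $\g_0$-module of lowest weight $\alpha_1-2\alpha_2$), and thereby to be sure that the $O(a)$-term of $\chi^{(a)}_1$ is genuinely harmonic rather than a coboundary. Granting $Hc\neq 0$, we conclude $\chi^{(a)}_1\neq 0$ for all $a\neq 0$, so $(\E_a)$ is not locally equivalent to $(\E_0)$, as claimed.
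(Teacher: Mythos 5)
Your overall strategy is the right one and is essentially the paper's: both arguments reduce the claim to showing that the degree-one part of the structure function of the associated extrinsic bundle defines a nonzero class in $H^1_1(\g_-,\gl(V)/\bar\g)\cong S^5\R^2$, and both locate that class in the single $a$-linear term that the deformation contributes to the connection form. Your scaling observation (that the contact dilation $(x,y,z)\mapsto(\lambda x,\mu y,\lambda\mu z)$ carries $(\E_a)$ to $(\E_{a\mu^2/\lambda^3})$, so all $a\neq 0$ give mutually equivalent systems) is correct and is in fact stated as a remark after the paper's proof; it lets you get away with knowing the contribution only to first order in $a$, whereas the paper computes it exactly: after building the explicit moving frame $\Theta=P\theta$ from a $\g_-$-module isomorphism $(\gr V)^*\cong U(\g_-)\otimes(\gr_{-2}V)^*/I$, the pulled-back form satisfies $F_1=aE_{74}+\overset{\circ}{F}_1$ with $\overset{\circ}{F}_1$ independent of $a$, so the cochain in question is exactly $\gamma=aE_{74}\otimes A_8^*$.

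The genuine gap is that you stop precisely at the step that carries the entire content of the proposition. You write ``Granting $Hc\neq 0$, we conclude\dots'', but $c$ is never pinned down beyond ``the cochain determined by the $\bar\g'$-component of $B$'', and no argument is given that its class is nonzero --- you yourself flag this as ``the only real obstacle.'' Without it, nothing rules out that the $a$-linear term is a coboundary $\partial A_1$ (i.e.\ absorbed by a change of adapted frame in $Q(0)$), in which case $(\E_a)$ would be equivalent to $(\E_0)$ after all; note that $a\neq 0$ being ``far from $0$'' is no help, since by your own scaling argument either every $(\E_a)$, $a\neq0$, is trivial or none is. The paper closes this gap concretely: it fixes the basis $\{A_1,\dots,A_{10}\}$ of $\fsp(4,\R)$, identifies the cocycle as $\gamma=E_{74}\otimes A_8^*$, and observes that $\gamma$ is a weight vector of weight $\alpha_1-2\alpha_2$, which is exactly the lowest weight of the irreducible $\g_0$-module $H^1_1(\g_-,\Gamma_{4,0})\cong S^5\R^2$ computed earlier; together with the verification that $\gamma$ is a cocycle but not a coboundary (a finite check against the adjoint action in that basis), this gives $H\chi_1\neq 0$. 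To complete your proof you must either reproduce that identification and check, or replace it by an equivalent explicit computation; as written, the argument is conditional on its main claim.
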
	
\begin{proof}
	Let us show that the structure function $\chi=\chi_1$ of an extrinsic bundle $Q$ associated to the equation~$(\E_a)$ does not vanish if $a\ne 0$, which will then prove by Theorem~\ref{thm:reduction} that $(\E_a)$ is not isomorphic to $(\E_0)$ if $a\ne 0$.
	
	We first note that since the principal parts of ($\E_a$) are the same for all $a$, the symbol of ($\E_a$) is isomorphic to that of the model equation and is equal to $(\g_{-},V)$ as graded module, where $\g=\fsp(4,\R)$ and $V=\g$ as graded module (modulo the shift of the grading).
	
	Next, let us note that ($\E_a$) satisfies all compatibility conditions for any $a$, as suggested by the computer computation mentioned above.
	
	Let $X$, $Y$ be as above and let $Z=-[X,Y]=\frac{\partial}{\partial z}$. By simple computation, we have
	\[
	\begin{cases}
	X^2Y=-XZ,\\
	Y^2X = -YZ,
	\end{cases}
	\]
	and for the 4-th order
	\[
	\begin{cases}
	X^4 = -aYZ,\\
	X^3Y = 0,\\[1ex]
	X^2Y^2 =\dfrac12 Z^2,\\[1ex]
	XY^3=0,\\
	Y^4=0,\\
	X^2Z = 0,\\[1ex]
	XYZ = -\dfrac12 Z^2,\\[1ex]
	Y^2Z = 0.
	\end{cases}
	\]
	The value of $Z^2$ remains free.
	
	For the 5-th order we have
	\[
	X^5 = \frac{a}{2} Z^2,
	\]
	and all other 5-th order derivatives
	\begin{gather*}
	X^4Y,\quad X^3Y^2,\quad \dots \\
	X^3Z,\quad X^2YZ,\quad \dots \\
	XZ^2,\quad YZ^2
	\end{gather*}
	vanish.
	
	We then see easily that all derivatives of order higher than~$5$ vanish. Thus, the equation~($\E_a$) is prolonged to a weighted involutive system of order $5$ and has a 10-dimensional solution space.
	
	Now note that since the symbol of~($\E_a$) is isomorphic to $(\g_{-}, \gr V)$, this means that
	\[
		(\gr V)^* \cong U(\g_{-})\otimes (\gr_{-2} V)^*/I,
	\]
	where $U(\g_{-})$ denotes the universal enveloping algebra of $\g_{-}$ and $I$ denotes the left $U(\g_{-})$-module generated by the generators of model equation ($\E_0$), that is
	\[
		X^3\otimes Z^*,\qquad Y^3\otimes Z^*,\qquad XYX\otimes Z^*,\qquad YXY\otimes Z^*,
	\]
	where $Z^*$ denotes the basis of $(\gr_{-2} V)^*$ dual to $Z$.
	
	Recalling our convention of grading, e.g., $\gr_q V^*=(\gr_{-q} V)^*$, we note that the isomorphism above is an isomorphism of $\g_{-}$-modules, which can be given explicitly as follows.
	
	Let $\{A_1,\dots, A_{10}\}$ be a basis of $\fsp(4,\R)$ and $\{A_1^*,\dots,A_{10}^*\}$ its dual basis of $\fsp(4,\R)^*$ given by	
\begin{gather*}
A_1 = E_{14},\ \quad\qquad\qquad\quad\, A_4 =E_{23},\ \quad\qquad\qquad\quad\,
A_7 =E_{32} ,\ \quad\qquad\qquad\quad\, A_{10} = E_{41}.
\\
A_2 = \frac{1}{\sqrt{2}}(E_{13}+E_{24}),\quad \ A_5 =\frac{1}{\sqrt{2}}(E_{11}-E_{44}),\quad \
A_8 = \frac{1}{\sqrt{2}}(E_{21}-E_{43}) ,
\\
A_3 = \frac{1}{\sqrt{2}}(E_{12}-E_{34}),\quad\ A_6 = \frac{1}{\sqrt{2}}(E_{22}-E_{33}), \quad \
A_9 = \frac{1}{\sqrt{2}}(E_{31}+E_{42}),
\end{gather*}
	We identify $X=A_8$, $Y=A_9$, $Z=A_{10}$.
	
	Then we can easily verify that the following map gives a $\g_{-}$-module isomorphism
	\[
		\begin{pmatrix} A_1^* \\ A_2^* \\ \vdots \\ A_{10}^*
		\end{pmatrix}
		\mapsto
	 \begin{pmatrix}
		-\frac{1}{2}Z^2 \\
		YZ \\
		XZ \\
		-Y^2 \\
		-\frac{1}{\sqrt{2}}Z\\
		-\frac{1}{\sqrt{2}}(XY+YX)\\
		X^2\\
		Y \\
		X\\
		1
		\end{pmatrix} \cdot Z^* .
	\]
	
	We denote by $P$ the 10 by 1 matrix of differential operators that appear in the right hand side of the above correspondence. Let $\{\theta_1, \dots,\theta_{10}\}$ be a fundamental system of solutions of ($\E_a$) and let
	\[
	\theta = (\theta_1, \dots,\theta_{10}),\qquad \Theta =P\theta, \qquad \Phi=\Theta^{-1}.
	\]

	Note that $\Theta$ and $\Phi$ are ${\rm GL}(V)\,(={\rm GL}(10,\R)$)-valued local functions on $M$. Similarly as in~Example~\ref{ex4new} we may regard $\Phi$ as giving a cross-section
	\[
	\begin{CD}
	Q(0) @= Q(0) @>\iota >> {\rm GL}(V) \\
	@A\Phi AA @AA\Theta^{-1} A @VVV\\
	M @= M @>\varphi >> \Flag(V,\phi),
	\end{CD}
	\]
	where $\varphi$ is an embedding corresponding to ($\E_a$) and $Q(0)$ the associated extrinsic bundle (recall Section~\ref{ss42}).
	
	Therefore, the $\gl(V)$-valued form $\Omega$ given by
	\[
	 \Omega = \Phi^{-1}\,{\rm d}\Phi = -{\rm d}\Theta\,\Theta^{-1}
	\]
	is the pull-back to $M$ of the canonical form $\om(0)$ on $Q(0)$. Write
	\[
		\Omega = F_0\om_0 + F_1\om_1 + F_2\om_2,
	\]
	where $\{\om_0,\om_1,\om_2\}$ is the coframe dual to $\{Z,X,Y\}$. By using the relations of ($\E_a$) we can easily write down $\{F_i\}$ and see that $F_0$ and $F_2$ do not depend on the parameter $a$ and
	\[
	F_1 = aE_{74} + \overset{\circ}{F}_1,
	\]
	where $E_{74}$ is the matrix element of $(7,4)$ and $\overset{\circ}{F}_1$ does not depend on~$a$.
	
	Recalling the discussion of Section~\ref{sec5}, we see that the 1-st order component $\chi(0)_1$ of the structure function $\chi(0)$ of $Q(0)$ is then represented by $\gamma= E_{74}\otimes A^*_8$.
	
	Now take a closer look at the complex
	\[
		0\to \bar\g_1 \xrightarrow{\partial} \Hom\left(\g_{-}, \gl(V)/\bar \g\right)_1 \xrightarrow{\partial} \Hom\left(\wedge^2 \g_{-}, \gl(V)/\bar \g\right)_1.
	\]
	
	If we write down explicitly the adjoint representation of $\fsp(4,\R)$ with respect to the basis $\{A_1,A_2,\dots,A_{10}\}$, it is not difficult to verify that~$\gamma$ is a cocycle but not a coboundary. Hence it~defines a non-trivial cohomology in $H^1_1\left(\g_{-},\gl(V)/\bar\g\right)$. It then proves, by virtue of corollary to~Theorem~\ref{thm:reduction}, that ($\E_a$) is not isomorphic to ($\E_0$) for any $a\ne 0$.
\end{proof}

It should be remarked that $\gamma=E_{74}\otimes A_8^*$ has weight $\alpha_1-2\alpha_2$ and in turn gives a lowest weight vector of $H^1\left(\g_{-},\gl(V)/\bar\g\right)$. Note also that systems ($\E_{a_1}$) and ($\E_{a_2}$) for different non-vani\-shing~$a_1$ and $a_2$ are equivalent to each other.

Cases $(i)$ of the contact $A_2$ geometry was already treated in Example~\ref{ex:A2} of Section~\ref{sec4}. In~particular, we have the standard model given by the Segre embedding
\[
P(U) \breve{\times} P(U^*) \to \ P\left( U \breve{\otimes} U^*\right)\!,
\]
where $\dim U =3$. The corresponding model equation can be locally expressed in the following form of a system of (weighted) second order differential equations
\[
\begin{cases}
	X^2u =0, \\
	Y^2u =0.
\end{cases}
\]

Similar to Proposition~\ref{deformCase2} we can show that the solution space of the system
\[
\begin{cases}
	X^2u =aYu, \\
	Y^2u =0,
	\end{cases}
\]
is 8-dimensional and provides a non-trivial deformation of the Segre embedding for any non-zero constant $a$.

Finally, let us describe the standard model for case $(iii)$ for the contact $G_2$ geometry. Let $U$ be the $7$-dimensional standard representation of $\g$. It is easy to check that the $\g$-module $\Lambda^2U$ decomposes as $V\oplus U$, where, as above, $V=\g$ is the adjoint representation. The projection to the first summand gives us $\g$-invariant map $\Lambda^2U\to V$, while the projection to the second summand defines an anticommutative multiplication on $U$.

The standard embedding $\varphi_{\mathrm{model}}$ is generated in this case by the following projective em\-bedding
\[
\varphi^4_{\mathrm{model}}\colon\ G/G^0=N_0\Gr_2(U)\longrightarrow P^{13}=P(V),
\]
where $N_0\Gr_2(U)$ is the Grassmann manifold of all $2$-dimensional null subalgebras in $U$. We~may call this map \emph{the contact sub-Pl\"ucker embedding}.

Using Proposition~\ref{prop7} we see that the cohomology describing the fundamental invariants of~osculating maps $\varphi\colon (M,\f)\to L/L^0\subset \Flag(V,\phi)$ is trivial in this case. So the standard model embedding $\varphi_{\mathrm{model}}$ is rigid.

We remark that the first cohomology $H^1(\g_{-},V) =\bigoplus H^1_{r}(\g_{-},V)$ represents a system of~differential equations, and $V=\oplus V_{q}$ represents its solution space, see~\cite{Mor2006}. In~our case the computation shows that
\[
H^1(\g_{-},V)=H^1_{0}(\g_{-},V)\cong \Hom_{0}\big(S^2\g_{-1},V_0\big).
\]
Here $\Hom_{0}\big(S^2\g_{-1},V_0\big)$ is the 7-dimensional irreducible $\g_{0}$-submodule of $\Hom\big(S^2\g_{-1},V_{0}\big)$. Exp\-li\-citly, we get the system of (weighted) linear differential equations on $M$ which is $G_2$-invariant and has $14$-dimensional solution space defining the standard embedding
\[
\begin{cases}
Z^2u = W^2u = YZu = XWu =0, \\
\big(2ZX+Y^2\big)u = \big(2YW+X^2\big)u =0, \\
(XY+YX+WZ+ZW)u =0,
\end{cases}
\]
where we take the local coordinates $(x,y,z,w,v)$ in $X$ such that
the contact distribution is represented by $\{ {\rm d}v-2y{\rm d}x+x{\rm d}y+z{\rm d}w=0 \}$, \
$Z=\frac{\partial}{\partial z},\ W=\frac{\partial}{\partial w}-z\frac{\partial}{\partial v},\
X=\frac{\partial}{\partial x}+2y\frac{\partial}{\partial v},\
Y=\frac{\partial}{\partial y}-x\frac{\partial}{\partial v}$.
See~\cite{IMT} for more details. Here
$u$ is an unknown function $u=u(x,y,z,w,v)$ on $X$.

\subsection{Realizability of deformations}
Proposition~\ref{thm3} lists all possible types of non-rigid rational homogeneous varieties of simple Lie groups. However, it does not provide explicit examples when they are really rigid.

For quadrics one can take the standard representation of $B_l$, $D_l$ or $A_3\cong D_3$, as a non-degenerate quadric in $P^n$ is clearly non-rigid.

In case of projective spaces one can consider, for example, the osculating map $\varphi\colon P^n\to \Flag\left(\R^{(n+1)(n+2)/2},\phi\right)$ generated by the Veronese embedding $\nu_2\colon P^n\to P^{(n+1)(n+2)/2-1}$, which corresponds to the $S^2(V)$ representation of ${\rm SL}(V)$. The filtration $\phi$ has type $((n+1)(n+2)/2$, $n+1,1)$ in this case.

Let $(x_1,\dots,x_n)$ be affine coordinates on $P^n$. In~these coordinates the Veronese embedding is specified by the space of all polynomials of degree $\le 2$ in $x_1,\dots,x_n$. The corresponding ${\rm PSL}(n+1,\R)$ invariant system of PDEs
\[
\frac{\partial^3u}{\partial x_i\partial x_j\partial x_k} = 0.
\]
We can provide an explicit deformation of the Veronese embedding by replacing the equation $\frac{\partial^3 u}{(\partial x_1)^3}=0$ with
\begin{gather*}
\frac{\partial^3 u}{(\partial x_1)^3} = \frac{\partial^2 u}{(\partial x_2)^2}.
\end{gather*}
It is easy to see that this deformed system is still compatible and has the solution space of~dimen\-sion $(n+1)(n+2)/2$, thus providing an explicit deformation of the Veronese embedding. Computer algebra calculations show that the dimension of the symmetry algebra of this system is less than the one of the trivial system implying that this deformation is non-trivial.

Consider now the adjoint variety of $\mathfrak{sp}(2n,\R)$. It was explicitly demonstrated in the previous subsection that it is non-rigid for the case of $\mathfrak{sp}(4,\R)$.

Assume now that $n$ is arbitrary. Let $V=\mathfrak{sp}(2n,\R)$ be the adjoint representation. Its~hig\-hest weight is $2\om_1$, and $\bar\g= \mathfrak{csp}(2n,\R)$. We have the following decompositions of $\gl(V)$ and $\bar\g^\perp\subset \gl(V)$:
\begin{gather*}
\gl(V) = V_{4\om_1}\oplus V_{2\om_1+\om_2} \oplus V_{2\om_2} \oplus V_{2\om_1} \oplus V_{\om_2} \oplus V_0,
\\
\bar\g^\perp = V_{4\om_1} \oplus V_{2\om_1+\om_2} \oplus V_{2\om_2} \oplus V_{\om_2}.
\end{gather*}
In particular, $H^1_+(\g_{-},V_{4\om_1})$ does not vanish, which suggests that the adjoint variety of $\mathfrak{sp}(2n,\R)$ is non-rigid for any $n\ge 3$.

Consider now $A$-type contact case in more detail. It corresponds to $(A_l, \{\alpha_1,\alpha_l\})$. Its minimal embedding is given by the highest root orbit of the adjoint representation of ${\rm SL}(l+1,\R)$. However, it can be shown that this orbit is non-rigid only for $l=2$. Indeed, we have the following decomposition of $\bar\g^{\perp}$, $V=\mathfrak{sl}(l+1,\R)$ depending on $l$:
\begin{gather*}
\bar\g^{\perp} = V_{2\om_1+2\om_l}\oplus V_{2\om_1+\om_{l-1}}\oplus V_{\om_2+2\om_{l}}\oplus V_{\om_2+\om_{l-1}} \oplus V_{\om_1+\om_l}, \qquad l\ge 4,
\\
\bar\g^{\perp} = V_{2\om_1+2\om_3}\oplus V_{2\om_1+\om_2}\oplus V_{\om_2+2\om_3}\oplus V_{2\om_2} \oplus V_{\om_1+\om_3}, \qquad l=3,
\\
\bar\g^{\perp} = V_{2\om_1+2\om_2}\oplus V_{3\om_1}\oplus V_{3\om_2}\oplus V_{\om_1+\om_2}, \qquad l=2.
\end{gather*}
In particular, we see that for $l\ge 3$ this decomposition has no components $U$, for which the cohomology $H^1_+(\g_{-},U)$ is non-trivial (see Proposition~\ref{prop7}).

\begin{lem}
	Consider the injective map $\mathfrak{sl}(l+1,\R)\to \gl(V)$, $l\ge2$, where $V$ is the irreducible representation with the highest weight $\rho=\omega_1+\omega_2+\dots+\omega_l$. Then the decomposition of $\gl(V)$ contains the components $V_{(l+1)\omega_1}$ and $V_{(l+1)\omega_l}$ with multiplicity $1$.
\end{lem}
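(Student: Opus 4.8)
The plan is to reinterpret $\gl(V)=V\otimes V^{*}$ through polynomial representations of $GL(l+1,\C)$ (extending scalars, as elsewhere in \S\ref{sec:var}) and to reduce the two multiplicities to the rectangular case of the Littlewood--Richardson rule. First one observes that $\rho=\omega_{1}+\dots+\omega_{l}$ is fixed by $-w_{0}$, which for $A_{l}$ is the diagram automorphism $\omega_{i}\mapsto\omega_{l+1-i}$; hence $V\cong V^{*}$ and $\gl(V)\cong V\otimes V$. In the language of $GL(l+1)$ the module $V=V_{\rho}$ is the irreducible polynomial module attached to the staircase partition $\delta=(l,l-1,\dots,1,0)$, and since $V_{\delta}^{*}\cong V_{\delta}\otimes(\det)^{-l}$ we get, as $\sll(l+1)$-modules, $\gl(V)\cong V_{\delta}\otimes V_{\delta}$. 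Consequently the multiplicity in $\gl(V)$ of an irreducible $\sll(l+1)$-module $V_{\lambda}$ equals the Littlewood--Richardson coefficient $c^{\,\nu}_{\delta\delta}$, where $\nu$ is the unique partition of $2|\delta|=l(l+1)$ with at most $l+1$ parts restricting to $\lambda$ on $\sll(l+1)$ (unique because $V_{\delta}\otimes V_{\delta}$ is homogeneous of degree $l(l+1)$, so the size of $\nu$ is forced).

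A short weight computation then identifies the two relevant partitions: $\lambda=(l+1)\omega_{1}$ corresponds to $\nu^{+}=(2l,(l-1)^{l})$, and $\lambda=(l+1)\omega_{l}$ corresponds to the $l\times(l+1)$ rectangle $\nu^{-}=\bigl((l+1)^{l}\bigr)$. (For $l=2$ this gives $\nu^{+}=(4,1,1)$ and $\nu^{-}=(3,3)$, i.e.\ the constituents $\Gamma_{3,0}$ and $\Gamma_{0,3}$ occurring with multiplicity one in the decomposition of $\gl(8)$ recorded in \S\ref{rank2ex}.) For the computation I will invoke the classical complement rule: in $GL(n)$ the multiplicity of $V_{(c^{n})}$ in $V_{\mu}\otimes V_{\nu}$ (for $\mu,\nu$ with at most $n$ parts) is $1$ when $\mu_{i}+\nu_{n+1-i}=c$ for all $i$ and $0$ otherwise; this follows from $V_{(c^{n})}=(\det)^{c}$, $V_{\mu}^{*}\otimes(\det)^{c}\cong V_{\mu^{\mathrm{c}}}$, and Schur's lemma.

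For $\nu^{-}$ this applies directly with $n=l$, $c=l+1$, $\mu=\nu=\delta$: the staircase is self-complementary in the $l\times(l+1)$ box since $\delta_{i}+\delta_{l+1-i}=(l+1-i)+i=l+1$, so $c^{\,\nu^{-}}_{\delta\delta}=1$. For $\nu^{+}$ one unpacks the skew shape $\nu^{+}/\delta$: it has $l$ cells in row $1$ lying in columns $l+1,\dots,2l$, an empty row $2$, and $k-2$ cells in row $k$ for $3\le k\le l+1$, all of the latter in columns $1,\dots,l-1$; column $l$ is unused, so the shape splits into its first row and a lower piece $SC=(l-1)^{l-1}/\delta'$, with $\delta'=(l-2,l-3,\dots,1,0)$, occupying disjoint columns. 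In any Littlewood--Richardson filling the last letter of the first row must be $1$ and the row is weakly increasing, so row $1$ consists of $l$ ones; this exhausts the $1$'s of the content $\delta$, the remaining cells of $SC$ carry only $2,\dots,l$, and shifting labels down by one identifies $c^{\,\nu^{+}}_{\delta\delta}$ with the number of Littlewood--Richardson fillings of $SC$ of content $(l-1,l-2,\dots,1)$, i.e.\ with $c^{\,(l-1)^{l-1}}_{\delta',\,(l-1,l-2,\dots,1)}$. Since $(l-1)^{l-1}$ is the full $(l-1)\times(l-1)$ rectangle and $(l-1,l-2,\dots,1)$ is exactly the complement of $\delta'$ in it, the complement rule gives $1$, hence $c^{\,\nu^{+}}_{\delta\delta}=1$ as well, and the lemma follows.

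The only steps requiring genuine care are keeping the $\det$-twists straight when passing between $\sll(l+1)$- and $GL(l+1)$-weights, and the observation that the forced first row decouples $\nu^{+}/\delta$ so that the count collapses to a rectangular Littlewood--Richardson coefficient; the rest is the standard complement rule together with routine bookkeeping. I expect the reduction of $c^{\,\nu^{+}}_{\delta\delta}$ to a rectangular coefficient to be the one nonobvious point.
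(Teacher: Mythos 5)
Your argument is correct, and its skeleton coincides with the paper's: both proofs use self-duality of $V_\rho$ to identify $\gl(V)$ with $V_\rho\otimes V_\rho$ as $\sll(l+1,\R)$-modules and then compute a Littlewood--Richardson coefficient $c^{\nu}_{\delta\delta}$ for the staircase $\delta$. The difference is in how the coefficients are evaluated. The paper treats only $(l+1)\omega_1$ (invoking duality of the two components for $(l+1)\omega_l$) and exhibits the unique Littlewood--Richardson filling of $(2l,(l-1)^l)/\delta$ directly, arguing row by row that every entry is forced. You instead reduce both coefficients to the rectangular case: for $(l+1)\omega_l$ the target $\bigl((l+1)^l\bigr)$ is already a rectangle and self-complementarity of the staircase in the $l\times(l+1)$ box gives multiplicity $1$ at once, while for $(l+1)\omega_1$ you note that the skew shape splits into its first row (forced to consist of $l$ ones by the ballot condition) and a lower piece in disjoint rows and columns, whose count collapses to the rectangular coefficient $c^{(l-1)^{l-1}}_{\delta',(l-1,\dots,1)}=1$ by the same complement rule. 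I checked the weight bookkeeping ($\nu^+=(2l,(l-1)^l)$, $\nu^-=((l+1)^l)$, the shape of the lower piece, and the two box-complement identities), and it is all correct; the decoupling of the ballot condition also works because the $l$ ones are exhausted by the first row and the remaining letter counts are at most $l-1$. Your route trades the paper's explicit tableau construction for two applications of the identity $V_\mu^*\otimes(\det)^c\cong V_{\mu^{\mathrm c}}$, which is a bit more systematic and makes the multiplicity-one conclusion transparent, at the cost of the extra (and, as you say, genuinely nonobvious) decoupling step for $\nu^+$.
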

\begin{proof}
	Indeed, the multiplicity of $V_{(l+1)\omega_1}$ and $V_{(l+1)\omega_l}$ in the decomposition of $\gl(V_{\rho})$ can be computed using the Littlewood--Richardson rule~\cite{fulton}. To do this, we note that the representation~$V_\rho$ is self-dual, and, hence $\gl(V_\rho)$ is isomorphic to $V_{\rho}\otimes V_{\rho}$. In~terms of Young tableaux the representation $V_{\rho}$ corresponds to the tableau $\lambda$ of shape $(l,l-1,\dots,1)$ ($l$ boxes in the first row, $l-1$ boxes in the second row, \dots, $1$ box in the last row).
	
	As $V_{(l+1)\omega_1}$ and $V_{(l+1)\omega_l}$ are dual to each other, it is clear that their multiplicities in the decomposition of $V_{\rho}\otimes V_{\rho}$ are the same. So, we can concentrate on computing the multiplicity of $V_{(l+1)\omega_1}$, which corresponds to any tableau of shape $(k+l+1,k,\dots,k)$ ($l+1$ rows in total). To fix $k$, we have to make sure that the tableau has exactly $l(l+1)$ (twice the number of boxes in the tableau of $V_{\rho}$). So, $k=l-1$ and we should represent $V_{(l+1)\omega_1}$ by the tableau $\nu$ of shape $(2l,l-1,\dots,l-1)$ ($l+1$ rows).
	
	Finally, applying the Littlewood--Richardson rule, the multiplicity of $V_{(l+1)\omega_1}$ is equal to the number of Littlewood--Richardson tableaux of shape $\nu/\lambda$ and of weight $\lambda$. It is easy to see that the first row of $\nu/\lambda$ should be filled by $1$. The next empty box appears in the 3rd row, and it should be filled by $2$ (otherwise we would not be able to fill in the 3rd column by the strictly increasing sequence). In~the same way, it is easy to see that the 4th row should be filled by~$2$ and $3$ and so on. So, we have a unique Littlewood--Richardson tableaux of shape $\nu/\lambda$ and of~weight~$\lambda$. See, for example, the picture below for $l=4$:
	\begin{gather*}
	\young(\bullet\bullet\bullet\bullet 1111,\bullet\bullet\bullet,\bullet\bullet 2,\bullet 23,234)\tag*{\qed}
\end{gather*}
\renewcommand{\qed}{}
\end{proof}

So, according to Proposition~\ref{prop7} the cohomology $H^1_+\left(\g_{-},\bar\g^{\perp}\right)$ does not vanish in this case. This leads to the conjecture that a rational homogeneous variety corresponding to the representation~$V_{\rho}$ is non-rigid.

\section{Equivalence problem of extrinsic geometries~-- general case}\label{sec:general}

\subsection{Generalizations to the non-principal and to the infinite-dimensional cases}

In this section we extend the discussions in Section~\ref{sec5} to obtain the invariants of extrinsic geo\-metries in more general settings.

First we drop the assumption that $L/L^0$ is embedded in a flag variety $\Flag(V, \phi)$ since it is not the representation itself of $L$ on $V$ but the filtered structure on $L/L^0$ that matters.

Second we will not assume the conditions of $\overline G^0$-invariance (C1) and (C2) which were needed for ``principal reduction'' to obtain extrinsic Cartan connections. Here we perform rather ``step reduction'' which does not necessarily lead to Cartan connections, but still gives a general algorithm to obtain the complete invariants of an extrinsic geometry.

Third we extend our discussions to the cases when $L$ may be infinite dimensional. This generalization will have interesting applications to extrinsic geometries of non-linear differential equations with respect to infinite-dimensional Lie groups such as the contact transformation groups.

To generalize to infinite-dimensional transformation groups, we rather treat infinitesimal transformations.

\begin{df}[\cite{Mor1988,sin-st65}] \emph{A transitive Lie algebra sheaf $($of infinitesimal transformations$)$ on a~filtered manifold $(N,\f)$} is a subsheaf $\cL$ (of Lie algebras) of the Lie algebra sheaf $\underline{TN}$ satisfying:
	\begin{enumerate}\itemsep=0pt
		\item[$(i)$] $\cL$ leaves invariant $\f$, that is, $[\cL, \underline{\f}^p] \subset \underline{\f}^p$ for all $p\in\Z$,
		\item[$(ii)$] $\cL$ is transitive, that is, the evaluation map $\cL_x \ni [X]_x \mapsto X_x \in T_xN$ is surjective for all $x\in N$,
		\item[$(iii)$] $\cL$ is defined by an involutive system of differential equations.
	\end{enumerate}
\end{df}

Note that the condition $(iii)$ means that $\cL$ is ``continuous'' in the sense of Lie and Cartan. For~the notion of involutive differential equations on filtered manifolds, see~\cite{Mor2002}.

Let $\cP(\cL)$ denote the pseudo-group of local transformations of $(N,\f)$ generated by $\cL$, namely by all local flows $f_t$ of all sections $X$ of $\cL$.

Let $\varphi_1\colon (M_1,\f_{M_1})\to (N,\f_N)$ and $\varphi_2\colon (M_2,\f_{M_2})\to (N,\f_N)$ be two morphisms of filtered manifolds, and let $x_1\in M_1$,\ $x_2\in M_2$.
We say that the germ of $\varphi_1$ at $x_1$ is $\cL$-equivalent to the germ of $\varphi_2$ at $x_2$, if there exist a local isomorphism $h\colon (M_1,\f_{M_1})\to (M_2,\f_{M_2})$ with $h(x_1)=x_2$ and $a\in \cP(\cL)$ such that $a\circ \varphi_1=\varphi_2\circ h$.

It is under this $\cL$-equivalence that we are going to study the equivalence problem. Before going further, let us recall some basic facts about our $\cL$.

First of all we note that each stalk $\cL_x$ has a natural filtration $\{\cL^p_x\}_{p\in\Z}$ induced from that of~$(N, \f_N)$ as defined by
\[
\cL_x^p =\begin{cases}
\big\{ [X]_x\in \cL_x \mid X_x \in\f^p_x \big\} & (p\le 0) ,
\\[1mm]
\big\{ \xi \in \cL^{p-1}_x \mid [\xi , \cL^i_x ] \subset \cL^{i+p}_x \text{ for all } i<0\big\} & (p\ge 1).
\end{cases}
\]
Then it is easy to verify that
\[
[\cL^p_x, \cL^q_x] \subset \cL^{p+q}_x.
\]
Thus, $\big(\cL_x, \{\cL^p_x\}\big)$ is a filtered Lie algebra, and passing to the quotient we get a graded Lie algebra~$\gr \cL_x$. As shown later, $\gr\f_x$ is anti-isomorphic to $\gr_{-} \cL_x \big({}=\sum_{p<0} \gr_p \cL_x\big)$.

With the notion of weighted jet bundle the filtration $\{\cL^p_x\}$ can be defined alternatively as follows. Regarding $TN$ as a filtered vector bundle over a filtered manifold $(N,\f_N)$, let $\wJ^{(k)}TN$ be the weighted $k$-th jet bundle. Denote by $\wj^{(k)}_x$ the canonical map $\underline{TN}_{\,x} \to \underline{\wJ^{(k)}TN}_{\,x}$ for $x\in N$. Then we have
\[
\cL^{p+1}_x = \big\{ [X]_x\in\cL_x \mid \wj^{(p)}_x X = 0 \big\}.
\]

Now let
\begin{gather*}
\cL^{(k)}_x =\big \{ \wj^{(k)}_xX \mid [X]_x\in \cL_x\big\} = \cL_x / \cL_x^{k+1},
\\
\cL^{(k)} = \bigcup_{x\in N} \cL^{(k)}_x.
\end{gather*}
Then $\cL^{(k)}\subset \wJ^{(k)}TN$ may be regarded as a system of differential equations defining $\cL$.

The condition $(iii)$ of the definition of Lie algebra sheaf may be understood to mean that there exists $k_0$ such that $\cL^{(k_0)}$ is weighted involutive and $\cL^{(l)}$ is the prolongation of
$\cL^{(k_0)}$ for~$l>k_0$. From the formal theory of differential equations, it is equivalent
to saying that $\cL^{(k)}$ are vector bundles for all $k$.

Taking the projective limit, we set
\[
\cL^{(\infty)}_x = \lim_{\longleftarrow} \cL^{(k)}_x,
\]
which carries the structure of filtered Lie algebra and is called the formal algebra of $\cL$ at~$x$. Note that from the transitivity of $\cL$ it follows that $\cL^{(\infty)}_x$ and $\cL^{(\infty)}_y$ are isomorphic for all~$x$,~$y$ in~a~connected component of $N$. Assuming $N$ connected if necessary, we fix a filtered Lie algebra~$\lf$, which is anti-isomorphic to all $\cL^{(\infty)}_x$, $x\in N$.

A proper generalization of the bundle $L\to L/L^0$ to infinite dimension may be to introduce the following principal bundle equipped with a Pfaff system:

\begin{thm}\label{thm:n3}
There is associated to a transitive Lie algebra sheaf $\cL$ on a filtered manifold $(N,\f)$ a principal fibre bundle $\fL$ over $N$ with structure group $L^0$ and $\lf$-valued $1$-form $\om_{\fL}$ on $\fL$ such that
	\begin{enumerate}\itemsep=0pt
		\item[$(i)$] $L^0$ is a possibly infinite-dimensional Lie group equipped with a filtration $\{L^p\}_{p\ge 0}$ consisting of closed normal subgroups of $L^0$ with the Lie algebra of $L^p$ being $\lf^p$ and
		\begin{gather*}
		L^0 = L^{(\infty)} = \lim_{\longleftarrow} L^{(k)},\\
		\fL = \fL^{(\infty)} = \lim_{\longleftarrow} \fL^{(k)},
		\end{gather*}
		where $L^{(k)} = L^0/L^{k+1}$, $\fL^{(k)} = \fL/L^{k+1}$.
		\item[$(ii)$] The map $\big(\om_{\fL}\big)_z\colon T_z\fL\to\lf$ is a filtration preserving isomorphism for all $z\in \fL$.
		\item[$(iii)$] $\langle \om_{\fL}, \tilde A\rangle = A$ for all $A\in \lf^0$.
		\item[$(iv)$] The adjoint action of $L^0$ on $\lf^0$ is extended to $\lf$ and
		\[
		R^*_a\om_{\fL} = {\rm Ad}(a)^{-1}\om_{\fL} \qquad\text{for all}\quad a\in L^0.
		\]
		\item[$(v)$] ${\rm d}\om_{\fL} + 1/2 [\om_{\fL},\om_{\fL}] = 0$.
		\item[$(vi)$] If $X$ is a section of $\cL$, then there is a unique lift $\widehat X$ of $X$ to $\fL$ such that $L_{\widehat X}\om_{\fL} = 0$. Conversely, if a local vector field $Z$ on $\fL$ satisfies $L_Z\om_{\fL} = 0$, then there exists a section $X$ of $\cL$ such that $Z= \widehat X$.
	\end{enumerate}
\end{thm}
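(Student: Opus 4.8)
The plan is to build $\fL$ as the inverse limit of finite‑dimensional bundles $\fL^{(k)}$, exactly as indicated in statement i), and to obtain $\om_\fL$ by passing to the limit from canonical $1$‑forms on the $\fL^{(k)}$. The starting point is that the conditions on $\cL$ guarantee that each $\cL^{(k)}\subset \wJ^{(k)}TN$ is a (finite‑dimensional) vector bundle over $N$, and that the fibres $\cL^{(\infty)}_x$ are mutually isomorphic filtered Lie algebras, anti‑isomorphic to a fixed $\lf$. First I would make precise the construction of the groups $L^{(k)}=L^0/L^{k+1}$: since $\lf^{k+1}$ is an ideal of $\lf^0$ and $\lf^0/\lf^{k+1}$ is a finite‑dimensional nilpotent‑by‑(finite) Lie algebra, one lets $L^{(k)}$ be the corresponding simply connected (or appropriately chosen) Lie group, and one checks the projections $L^{(k+1)}\to L^{(k)}$ are surjective Lie group morphisms with kernel a normal subgroup whose Lie algebra is $\lf^{k+1}/\lf^{k+2}$; the inverse limit $L^0=\varprojlim L^{(k)}$ is then a pro‑finite‑dimensional (``Lie–Cartan'') group carrying the filtration $\{L^p\}$ by closed normal subgroups with $\operatorname{Lie}(L^p)=\lf^p$. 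This is the general formalism of Singer–Sternberg and Morimoto; I would cite \cite{sin-st65,Mor1988} rather than reprove it.

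Next I would construct $\fL^{(k)}\to N$ as the bundle whose fibre over $x$ is the set of filtered Lie algebra isomorphisms (or rather anti‑isomorphisms, to match the sign convention whereby $\gr\f_x$ is anti‑isomorphic to $\gr_-\cL_x$) between $\lf^{(k)}=\lf/\lf^{k+1}$ and $\cL^{(k)}_x$ which are compatible with the identifications used at the previous level; equivalently, $\fL^{(k)}$ is the bundle of ``$k$‑th order admissible frames'' of $\cL$. Transitivity of $\cL$ makes this a principal bundle with structure group $L^{(k)}$ acting by precomposition. On $\fL^{(k)}$ one has a tautological $\lf^{(k)}$‑valued $1$‑form defined in the usual way (at a frame $z$ over $x$, send a tangent vector to the $\cL^{(k)}_x$‑component of the corresponding jet and transport it back via $z$); I would verify the standard equivariance $R_a^*=\Ad(a)^{-1}$ and the reproducing property $\langle\cdot,\tilde A\rangle=A$ at each finite level, together with the fact that these forms are compatible under the projections $\fL^{(k+1)}\to\fL^{(k)}$. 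Passing to the inverse limit yields $\fL=\varprojlim\fL^{(k)}$, a principal $L^0$‑bundle, and $\om_\fL=\varprojlim$ of the tautological forms, which is then $\lf$‑valued; properties iii) and iv) follow by taking limits. That $\big(\om_\fL\big)_z$ is a filtration‑preserving isomorphism onto $\lf$ (property ii)) is built into the construction: the negative part reproduces $T_xN$ with its filtration via transitivity, and the nonnegative part reproduces $\lf^0=\operatorname{Vert}$ by iii).

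The structure equation v) and the lifting property vi) are the heart of the matter, and vi) is where I expect the main obstacle. For v), I would argue that $d\om_\fL+\tfrac12[\om_\fL,\om_\fL]$ is a horizontal, $\Ad$‑equivariant $2$‑form, hence descends to a curvature‑type tensor, and then show it vanishes by a jet‑theoretic computation at a single point: the defining property of the tautological form — that it encodes how $\cL$‑jets prolong — forces the Maurer–Cartan identity because the Lie algebra sheaf $\cL$ is genuinely a sheaf of vector fields closed under bracket, so the obstruction to flatness is precisely the failure of $\cL$ to be a Lie algebra, which does not occur. Concretely, one picks local sections of $\cL$ realizing a basis of $\gr\cL_x$, lifts them (this is where vi) is used, so some care about logical order is needed — one can instead lift to the finite levels $\fL^{(k)}$ where everything is classical and then take limits), and computes bracket relations. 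For vi), the forward direction is a prolongation argument: given a section $X$ of $\cL$, its flow $f_t$ acts on jets and hence on each $\fL^{(k)}$ preserving the tautological form by naturality, and differentiating gives the lift $\widehat X$ with $L_{\widehat X}\om_{\fL}=0$; the delicate point is uniqueness and the convergence of these lifts through the inverse limit, which I would handle level by level. The converse — that any local vector field $Z$ on $\fL$ with $L_Z\om_\fL=0$ is of the form $\widehat X$ — is the genuinely hard step: from $L_Z\om_\fL=0$ one gets $i_Z\,d\om_\fL=-d(i_Z\om_\fL)$, so setting $A=i_Z\om_\fL$ (a function valued in $\lf$) and using v) one obtains $dA=[\om_\fL,\cdot]$‑type equation showing $Z$ projects to a well‑defined vector field $X$ on $N$; one must then check $X$ is a section of $\cL$, i.e.\ satisfies the defining involutive system. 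This follows because $Z$ preserving $\om_\fL$ forces $X$ to preserve $\fL^{(k)}\subset\wJ^{(k)}TN$, which is exactly the differential equation cutting out $\cL$. I would present this using the formal theory of \cite{Mor2002} to avoid re‑deriving the involutivity bookkeeping, and I expect that making the inverse‑limit arguments rigorous (rather than merely ``level by level and pass to the limit'') is the part that needs the most careful writing.
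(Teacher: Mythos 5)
Your overall shape (inverse limit of finite--dimensional bundles carrying a tautological form) is reasonable, but there is a genuine gap in the definition of $\fL^{(k)}$ that breaks properties i)--iii). If $\fL^{(k)}_x$ is the set of \emph{all} filtered (anti-)isomorphisms from $\lf^{(k)}$ to $\cL^{(k)}_x$, then the group acting simply transitively on this fibre by precomposition is the group of filtration-preserving automorphisms of $\lf$ (truncated), whose Lie algebra is the space of filtration-preserving derivations of $\lf$; this in general strictly contains $\ad(\lf^0)$. Already for $\cL$ generated by $\partial/\partial x$ on $N=\R$ one has $\lf^0=0$ while the scaling automorphism survives, so your fibre is one-dimensional, the structure group is not $L^0$, and $(\om_{\fL})_z$ cannot be an isomorphism onto $\lf$. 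The missing step is to cut the full frame bundle down to a single orbit of the lifted pseudogroup $\cP(\cL)$, and this is exactly what the paper does: it lifts $\cL$ to the intrinsic reduced frame-bundle tower $\cR^{(k)}$ of $(N,\f_N)$, notes that evaluation of the lifted sheaf $\widehat\cL^{(k)}$ defines a completely integrable constant-rank distribution $D^{(k)}\subset T\cR^{(k)}$, takes $\fL^{(k)}$ to be the maximal integral leaf through a compatible family of base points, and only afterwards proves (via transitivity of $\widehat{\cP(\cL)}$ and the identity $zab\in\fL$ whenever $za,zb\in\fL$) that the leaf is a principal bundle whose structure group has Lie algebra $\lf^0$. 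A secondary but real defect of your setup is that $\cL^{(k)}_x=\cL_x/\cL^{k+1}_x$ carries no well-defined Lie bracket (the bracket only descends modulo $\cL^{k+1-\mu}_x$ because the filtration has negative depth $\mu$), so ``filtered Lie algebra isomorphisms at level $k$'' is not well posed as stated.

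The leaf construction also dissolves the circularity you flag between v) and vi). Sections of $\cL$ are in particular infinitesimal automorphisms of $(N,\f_N)$, hence lift canonically to every $\cR^{(k)}$ by the general frame-bundle theory of~\cite{Mor1993}, with no appeal to vi); these lifts are tangent to the leaf and preserve the restriction $\om_{\fL}$ of the canonical form of $\cR$. Transitivity of the lifted pseudogroup then forces the structure function $\gamma_{\fL}$ in $d\om_{\fL}+\tfrac12\gamma_{\fL}(\om_{\fL},\om_{\fL})=0$ to be \emph{constant}, and this constant is taken as the definition of the bracket on $E_{\fL}\cong\lf$; evaluating the structure equation on lifted fields $\wX,\wY$ yields $\om_{\fL}([\wX,\wY])+\gamma_{\fL}(\om_{\fL}(\wX),\om_{\fL}(\wY))=0$, which simultaneously proves v) and identifies $(E_{\fL},\gamma_{\fL})$ as anti-isomorphic to $\cL^{(\infty)}_x$. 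In other words, v) is automatic by construction rather than a curvature computation against a pre-assigned bracket, which is the part of your plan that would be hardest to carry out as written.
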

Note that $\fL$ and $L^0$ can be infinite dimensional. But since they are projective limits of finite-dimensional objects, we can treat those infinite-dimensional objects similarly to the finite-dimensional case.

\begin{proof}
	To prove the theorem we employ the scheme developed in~\cite{Mor1993}.
	
	First we note that since $\cL$ is transitive on $N$, $(N, \f_N)$ has a constant symbol, say $\m$. Let $\cR^{(k)}(N,\f_N,\m)$ or simply $\cR^{(k)}$ be the reduced frame bundle of order $(k+1)$ and set $\cR = \cR^{(\infty)}={\displaystyle\lim_{\longleftarrow} \cR^{(k)}}$ (see~\cite[p.~316]{Mor1993}). Recall that every automorphism $h$ of $(N,\f_N)$ is uniquely lifted to an~automorphism $\hat h^{(k)}$ of $\cR^{(k)}$. Therefore we have a lift $\widehat{\cP(\cL)}^{(k)}$ of $\cP(\cL)$ to $\cR^{(k)}$. Passing to the infinitesimal, every infinitesimal automorphism $X$ of $(N,\f_N)$ is uniquely lifted to the one $\widehat X^{(k)}$ on $\cR^{(k)}$ and the Lie algebra sheaf $\cL$ to the one $\widehat\cL^{(k)}$ on $\cR^{(k)}$. Clearly, we have $\cP\big(\widehat\cL^{(k)}\big)=\widehat{\cP(\cL)}^{(k)}$.
	
	By evaluation, $\widehat \cL^{(k)}$ defines a subbundle $D^{(k)}\subset T\cR^{(k)}$, which is, as easily seen, of constant rank and completely integrable. Now choose $\mathring{z}^{(k)}\in\cR^{(k)}$ ($k=0,1,\dots$) such that $\pi^{k+1}_k\big(\mathring{z}^{(k+1)}\big)$ $=\mathring{z}^{(k)}$, where $\pi^{k+1}_k$ denotes the projection $\cR^{(k+1)}\to \cR^{(k)}$. Let $\fL^{(k)}$ be the maximal integral manifold of $D^{(k)}$ through $\mathring{z}^{(k)}$, and set
	\[
	\fL = \lim_{\longleftarrow} \fL^{(k)},\qquad
\mathring{z} = \lim_{\longleftarrow} \mathring{z}^{(k)}.
	\]
	Then we see immediately that $\fL^{(k+1)}\to \fL^{(k)}$ is a surjective submersion for all $\infty \ge k \ge -1$, where we set $\fL^{-1}=N$. Not only this, we are now going to see that $\fL^{(k)}\to \fL^{(j)}$ are all principal fibre bundles for $k\ge j$.
	
	Let $\om_{\cR}$ be the canonical 1-form on $\cR$, which is a 1-form on $\cR$ taking values in $E(\m)=\m\oplus\g^0(\m)$, where $G^0(\m)$ is the structure group of $\cR\to N$ and $\g^0(\m)$ is the Lie algebra of $G^0(\m)$.
	
	Let $\om_{\fL}$ be the restriction of $\om_{\cR}$ to $\fL$: $\om_{\cL} = \iota^*\om_{\cR}$, where $\iota$ denotes the inclusion $\fL\to \cR$. If~$\Phi\in \widehat{\cP(\cL)}$, then $\Phi^*\om_{\cR}=\om_{\cR}$ and $\Phi(\fL)=\fL$. Therefore $\Phi^*\om_{\fL}=\om_{\fL}$. It then follows that there is a subspace $E_{\fL}\subset E(\m)$ such that we have an isomorphism
	\[
	\big(\om_{\fL})_z \colon\quad T_z\fL \to E_{\fL}\qquad\text{for all}\quad z\in\fL,
	\]
	and we have
	\[
	{\rm d}\om_{\fL}+\frac12\gamma_{\fL}(\om_{\fL},\om_{\fL})=0
	\]
	with a constant structure function $\gamma_{\fL}\in \Hom\big({\wedge}^2 E_{\fL}, E_{\fL}\big)$, because $\widehat{\cP(\cL)}$ is transitive on $\fL$.
	
{\samepage
We see therefore that $\gamma_{\fL}\colon E_{\fL}\times E_{\fL}\to E_{\fL}$ defines a Lie algebra structure on $E_{\fL}$. Moreover, if~we~set
	\[
	\g^0(\fL) = \g^0(\m)\cap E_{\fL},
	\]
	the Lie algebra structure defined by $\gamma_{\fL}|_{\g^0(\fL)\times\g^0(\fL)}$ coincides with the one induced from $\g^0(\m)$.

}
	
	Let $X,Y\in\cL$. Substituting $\wX$, $\wY$ into the structure equation, we have
	\[
	\wX \om_{\fL}\big(\wY\big) - \wY \om_{\fL}\big(\wX\big) - \om_{\fL}\big(\big[\wX,\wY\big]\big)
	+\gamma_{\fL}\big(\om_{\fL}\big(\wX\big), \om_{\fL}\big(\wY\big)\big)=0.
	\]
	But since $L_{\wX}\om_{\fL}=0$, we have
	\[
	\wX \om\big(\wY\big) = \big(L_{\wX}\om_{\fL}\big)\big(\wY\big)+\om_{\fL}(L_{\wX}\wY)=\om_{\fL}\big(\big[\wX,\wY\big]\big).
	\]
	Therefore we have
	\[
	\om_{\fL}\big(\big[\wX,\wY\big]\big)+ \gamma_{\fL}\big(\om_{\fL}\big(\wX\big), \om_{\fL}\big(\wY\big)\big)=0,
	\]
	which implies that the map
	\[
	\fL_x \xrightarrow{\widehat{\phantom{X}}} \widehat\cL_z \xrightarrow{\left(\om_{\fL}\right)_z} E_{\fL} \qquad (z\in\fL,\ x=\pi_N(z)\in N)
	\]
	gives an anti-isomorphism $\cL^{(\infty)}_x \xrightarrow{\om^{\wedge}_x} (E_{\fL},\gamma_{\fL})$ of Lie algebras, where $\cL_x^{(\infty)}$ denotes the formal algebra of $\cL$ at $x$.
	
	Recalling that $E_{\fL}$ and $\cL_x^{(\infty)}$ have natural filtrations $\big\{\phi^p E_{\fL}\big\}$ and $\big\{ \phi^p \cL_x^{(\infty)} \big\}$, we remark that
	$\om^\wedge_x\big(\phi^p\cL_x^{(\infty)}\big) = \phi^p E_{\fL}$ for all $p\in\Z$. In~fact, it is easy to see that it holds for $p\le 0$. But in both filtrations the space $\phi^p$ ($p\ge 0$) is determined by
	\[
	\phi^p = \big\{ X \in \phi^{p-1} \mid [X,\phi^a]\subset \phi^{p+a} \text{ for all } a < 0\big\}.
	\]
	Therefore the assertion is valid for all $p\in\Z$.
	
\smallskip	
In particular, we have
	\begin{enumerate}\itemsep=0pt
		\item $\gr \f_{N,x}$ is anti-isomorphic to $\gr_{-}\cL_x$ for $x\in N$.
		\item For $X\in \cL_x$, $z\in \fL$ with $\pi_N(z)=x$, and $k\in\Z$, $j^k_xX = 0$ if and only if $\wX_z\equiv 0 \mod \phi^{k+1}T_z\fL$.
	\end{enumerate}
	In relation to (2), in general, we can prove that if $X$ is an infinitesimal automorphism of a~filtered manifold $(M,\f)$, then, for $k<0$, $j^k_x X=0$ if and only if $X_k\in\f^{k+1}_x$.
	
	Let us show that if $za, zb\in\fL$ for $z\in\fL$, $a,b\in G^0(\m)$, then $zab\in\fL$. In~fact, since $z,za\in\fL$ and $\widehat \cP(\cL)$ is transitive on $\fL$, there exist neighborhoods $U$, $U'$ of $x=\pi^{\cR}(z)$ in $N$ and a diffeomorphism $\Phi\colon \big(\pi^{\cR}\big)^{-1}(U)\to \big(\pi^{\cR}\big)^{-1}(U')$ such that $\Phi^*\om_{\cR} = \om_{\cR}$, $\Phi(wg)=\Phi(w)g$, $w\in \big(\pi^{\cR}\big)^{-1}(U)$, $g\in G^0(\m)$, $\Phi\big(\big(\pi^{\cR}\big)^{-1}(U)\cap \fL\big) \subset \big(\pi^{\cR}\big)^{-1}(U')\cap \fL$ and $\Phi(z)=za$, where $\pi^{\cR}$ denotes the projection $\cR\to N$. Then since $zb\in\fL \cap \big(\pi^{\cR}\big)^{-1}(U)$, we have $zab=\Phi(z)b=\Phi(zb)\in\fL$, which proves the assertion.
	
	Now it is easy to see that $\fL$ is a principal fibre bundle over $N$ with structure group $G^0(\fL)$, where $G^0(\fL)$ is a Lie subgroup of $G^0(\m)$ with Lie algebra $\g^0(\fL)$, which can be identified with $\lf^0$.
	
	It is not difficult to verify that all the statements in Theorem~\ref{thm:n3} hold.
\end{proof}

\begin{rem}
	The statement $(vi)$ of Theorem~\ref{thm:n3} means that $(\fL,\om_{\fL})$ can be regarded as a~defining equation of $\cL$, in other words, an infinitesimal automorphism $X$ of $(N,\f)$ is a section of $\cL$ if and only if the lift $\wX$ of $X$ to $\cR$ is tangent to $\fL$, which is, in turn, equivalent to saying that the lift $\wX^{(k_0)}$ of $X$ to $\cR^{(k_0)}$ is tangent to $\fL^{(k_0)}$ for certain integer $k_0$.
	
	This integer $k_0$ is given by the condition
	\[
	H^1_r(\gr_{-}\lf,\gr \lf) = 0 \qquad \text{for}\quad r\ge k_0.
	\]
	The existence of such $k_0$ is assured by the finiteness of this cohomology group (see~\cite{Mor1988}).
\end{rem}

It being prepared, let $\varphi\colon (M,\f_M)\to (N,\f_N)$ be a morphism. Then via pull-back we obtain the principal fibre bundle $\varphi^*\fL$ over $M$ with the structure group $L^0$ endowed with an $\lf$-valued 1-form $\tilde\varphi^*\om$, where $\tilde\varphi\colon \varphi^*\fL\to \fL$ is a canonical inclusion. The bundle with Pfaff forms $(\varphi^*\cL,\tilde\varphi^*\om)$, denoted also by $(Q_\varphi, \om_\varphi)$, is called an extrinsic bundle associated with $\varphi$.

From now on we assume $\gr\varphi_{*,x}\colon \gr (\f_M)_x\to \gr (\f_N)_{\varphi(x)}$ is injective for all $x\in M$, so that $\gr \varphi_{*,x}$ is an injective graded Lie algebra homomorphism.

Let $\g_{-}=\bigoplus_{p<0}\g_p$ be a graded subalgebra of $\gr\lf_{-} = \bigoplus_{p<0}\lf$.

\begin{df}
	We say that $\varphi\colon (M,\f_M)\to (N,\f_N)$ has constant symbol of type $(\g_{-}, \cL)$,
if for any $x,y\in M$ there exist $a\in \cP(\cL)$ and isomorphisms of graded Lie algebras $\beta_x$ and $\beta_y$ which make the following diagram commutative:
	\[
	\begin{tikzcd}[row sep=small]
	& \gr (\f_M)_x \arrow[r, "\varphi_{*,x}"] & \gr (\f_N)_{\varphi(x)} \arrow[dd,"\gr a"] \\
	\g_{-} \arrow[ur, "\beta_x"] \arrow[dr, "\beta_y"]& & \\
	& \gr (\f_M)_y \arrow[r, "\varphi_{*,x}"] & \gr (\f_N)_{\varphi(y)}.
	\end{tikzcd}
	\]	
\end{df}

Then we have
\begin{prop}
	The map $\varphi\colon (M,\f_M)\to (N,\f_N)$ is of type $(\g_{-},\cL)$,
if and only if $(Q_{\varphi},\om_{\varphi})$ satisfies the following criterion:
	
	For any $x\in M$ there exists $z\in Q_\varphi$ over $x$ such that $\gr \om_z (\gr (\f_M)_x ) = \g_{-}$.
\end{prop}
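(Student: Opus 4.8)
The plan is to push the whole statement onto the model bundle $\fL\to N$ of Theorem~\ref{thm:n3} along the canonical inclusion $\tilde\varphi\colon Q_\varphi=\varphi^*\fL\to\fL$ (which covers $\varphi$), and there to reduce both sides of the asserted equivalence to a single statement about $\gr\Ad(L^0)$-conjugacy of graded subalgebras of $\gr_-\lf$.

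First I would set up the dictionary. Fix $z\in Q_\varphi$ over $x\in M$ and put $\zeta=\tilde\varphi(z)\in\fL$, lying over $\varphi(x)$. Since $\om_\varphi=\tilde\varphi^*\om_\fL$ and $\tilde\varphi$ covers $\varphi$, reducing modulo $\lf^0$ and passing to associated graded maps gives the identity of graded maps $\gr\f_{M,x}\to\gr_-\lf$
\[
\gr\om_z=\gr(\bar\om_\fL)_\zeta\circ\gr\varphi_{*,x},
\]
in which $\gr(\bar\om_\fL)_\zeta\colon\gr\f_{N,\varphi(x)}\to\gr_-\lf$ is an isomorphism of graded Lie algebras — this is precisely the identification of $\gr\f_{N,\bullet}$ with $\gr_-\lf$ extracted from Theorem~\ref{thm:n3}~ii)--v), the bracket being read off from $d\om_\fL+\tfrac12[\om_\fL,\om_\fL]=0$ exactly as in the Remark following the definition of $L/L^0$ extrinsic bundle — and $\gr\varphi_{*,x}$ is the injective graded homomorphism of the standing hypothesis. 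Hence $\gr\om_z(\gr\f_{M,x})=\gr(\bar\om_\fL)_\zeta\bigl(\varphi_{*,x}(\gr\f_{M,x})\bigr)$. Next, the equivariance $R_a^*\om_\fL=\Ad(a)^{-1}\om_\fL$ of Theorem~\ref{thm:n3}~iv) yields $\gr(\bar\om_\fL)_{\zeta a}=\gr\Ad(a)^{-1}\circ\gr(\bar\om_\fL)_\zeta$ for $a\in L^0$, so as $z$ runs over the fibre of $Q_\varphi$ over $x$ the subalgebra $\gr\om_z(\gr\f_{M,x})$ runs over a single $\gr\Ad(L^0)$-orbit $\Sigma_x$ in the set of graded subalgebras of $\gr_-\lf$. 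Finally, by Theorem~\ref{thm:n3}~vi) — equivalently, by the transitivity of $\widehat{\cP(\cL)}$ on $\fL$ established in its proof — for $b\in\cP(\cL)$ with $b(\varphi(x))=\varphi(y)$ the canonical lift satisfies $\gr(\bar\om_\fL)_{\hat b\zeta}\circ\gr b=\gr(\bar\om_\fL)_\zeta$; a two-line chase then shows that $\gr b$ carries $\varphi_{*,x}(\gr\f_{M,x})$ onto $\varphi_{*,y}(\gr\f_{M,y})$ if and only if $\Sigma_x=\Sigma_y$.

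With this dictionary both implications are short. The criterion says exactly that $\g_-\in\Sigma_x$ for every $x$; since distinct $\gr\Ad(L^0)$-orbits are disjoint, this forces $\Sigma_x=\Sigma_y$ for all $x,y$, and unwinding the last sentence of the dictionary these equalities are precisely the existence, for each pair $x,y$, of $a\in\cP(\cL)$ with $\gr a(\varphi_{*,x}(\gr\f_{M,x}))=\varphi_{*,y}(\gr\f_{M,y})$; together with the normalization $\beta_x:=(\gr\om_{z_x})^{-1}\colon\g_-\to\gr\f_{M,x}$ supplied by a frame $z_x$ as in the criterion, this is the commutativity of the diagram defining type $(\g_-,\cL)$. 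Conversely, reading the defining diagram of type $(\g_-,\cL)$ against the concrete inclusion $\g_-\subset\gr_-\lf$, one gets both the $\cP(\cL)$-conjugacy $\gr a(\varphi_{*,x}(\gr\f_{M,x}))=\varphi_{*,y}(\gr\f_{M,y})$ — hence $\Sigma_x=\Sigma_y$ for all $x,y$ — and, at one base point $x_0$, a frame $z_{x_0}$ with $\gr\om_{z_{x_0}}(\gr\f_{M,x_0})=\g_-$, so that $\g_-\in\Sigma_{x_0}=\Sigma_x$ for all $x$, which is the criterion. The only step demanding genuine care is the one invoked in the dictionary, namely rechecking via the proof of Theorem~\ref{thm:n3} that $\gr(\bar\om_\fL)_\zeta$ is a \emph{Lie algebra} isomorphism onto $\gr_-\lf$; otherwise the main obstacle is conceptual bookkeeping rather than computation — keeping straight the three actions in play (the $L^0$-action on the fibres of $\fL$, the lifted $\cP(\cL)$-action on $\fL$ over the action on $N$, and the abstract identifications $\beta_x$) and verifying that they interact as the dictionary asserts.
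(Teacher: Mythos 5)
The paper states this proposition without proof, so your argument can only be judged against what the statement requires. Your overall architecture is certainly the intended one and is sound as far as the dictionary goes: transporting everything to $(\fL,\om_{\fL})$ along $\tilde\varphi$, checking via the structure equation that $\gr(\bar\om_{\fL})_\zeta\colon \gr(\f_N)_{\varphi(x)}\to\gr_-\lf$ is a graded Lie algebra isomorphism, and using the two equivariances ($\Ad(L^0)^{-1}$ along the fibres, invariance under lifts of $\cP(\cL)$) to reduce the statement to one about $\gr\Ad(L^0)$-orbits $\Sigma_x$ of graded subalgebras of $\gr_-\lf$. There are, however, two places where the argument does not close. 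The more serious one is in the direction ``type $\Rightarrow$ criterion'': you claim that the defining diagram yields ``a frame $z_{x_0}$ with $\gr\om_{z_{x_0}}(\gr(\f_M)_{x_0})=\g_-$''. The printed definition only supplies an \emph{abstract} isomorphism of graded Lie algebras $\beta_{x_0}\colon \g_-\to\gr(\f_M)_{x_0}$; no arrow in that diagram relates $\beta_{x_0}$ to the inclusion $\g_-\subset\gr_-\lf$ or to any point of $\fL$. What the criterion demands is that the concrete subalgebra $\gr(\bar\om_{\fL})_\zeta\bigl(\varphi_{*,x_0}\gr(\f_M)_{x_0}\bigr)$ be $\gr\Ad(L^0)$-\emph{conjugate} to the given $\g_-\subset\gr_-\lf$, i.e.\ $\g_-\in\Sigma_{x_0}$, and abstract isomorphism does not imply conjugacy. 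The equivalence is true only if ``type $(\g_-,\cL)$'' is read with the frame-compatibility built in, as in the finite-dimensional Definitions (where $\beta$ is required to be induced by $a\in L$ with $a\phi=\varphi_x$, resp.\ by $z\in\Pi_x$). You should make that reading explicit rather than silently import it; as written, this step is a gap.

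The second point is repairable but needs more than the advertised ``two-line chase''. For a \emph{fixed} $b\in\cP(\cL)$ the claim ``$\gr b$ carries $\varphi_{*,x}(\gr(\f_M)_x)$ onto $\varphi_{*,y}(\gr(\f_M)_y)$ iff $\Sigma_x=\Sigma_y$'' is false in the ``if'' direction: from $\Sigma_x=\Sigma_y$ you only learn that the images under $\gr(\bar\om_{\fL})_{\hat b\zeta}$ of the two subalgebras of $\gr(\f_N)_{\varphi(y)}$ lie in the same $\gr\Ad(L^0)$-orbit, not that they coincide. To produce \emph{some} $b$ that works you must absorb the conjugating element $c\in L^0$ into $\cP(\cL)$, which requires the transitivity of $\widehat{\cP(\cL)}$ on the \emph{fibres} of $\fL$ (not just the $R_c$-equivariance) to find $c_0\in\cP(\cL)$ fixing $\varphi(y)$ with $\hat c_0(\hat b\zeta)=\hat b\zeta\cdot c$, and then replace $b$ by $c_0\circ b$. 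A cleaner route for ``criterion $\Rightarrow$ type'' avoids the orbit bookkeeping entirely: take adapted frames $z_x, z_y$ supplied by the criterion, use fibre-transitivity of $\widehat{\cP(\cL)}$ to choose $a\in\cP(\cL)$ with $\hat a(\tilde\varphi(z_x))=\tilde\varphi(z_y)$; then $\gr(\bar\om_{\fL})_{\tilde\varphi(z_y)}\circ\gr a=\gr(\bar\om_{\fL})_{\tilde\varphi(z_x)}$ gives both the conjugacy $\gr a(\varphi_{*,x}\gr(\f_M)_x)=\varphi_{*,y}\gr(\f_M)_y$ and the commutativity of the diagram with $\beta_x,\beta_y$ the inverses of the two adapted maps $\gr\om_{z_x},\gr\om_{z_y}$ — note that you must also exhibit $\beta_y$, which your write-up leaves implicit.
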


Now we are going to construct the invariants of extrinsic geometries $\varphi\colon (M,\f_M)\to (N,\f_N)$ under $\cL$. To avoid surplus complexity, we assume:
\begin{enumerate}\itemsep=0pt
	\item[(C0)] $\lf=\widehat\bigoplus\,\lf_p$, that is, $\lf$ is isomorphic to the completion of $\gr\lf$ obtained as a projective limit of~partial sums $\bigoplus_{p\le k}\lf_p$ for $k\ge 0$.
\end{enumerate}
We also restrict ourselves to morphisms $\varphi$ of constant symbol, say of type $(\g_{-},\cL)$.

We follow the discussions in Section~\ref{sec5} by making necessary modifications to adapt to this general case.

Let $\bar\g = \Prol(\g_{-},\lf)$ be the relative prolongation of $\g_{-}$ in $\lf$. Similarly as before we define
\[
\overline G_0 = \big\{ a\in L^{(0)} \mid {\rm Ad}(a) \g_{-} = \g_{-} \big\}
\]
and $\overline G(k)$, $\bar\g(k)$ as well as their filtrations
$\big\{\overline G(k)^p\big\}$, $\big\{\bar\g(k)^p\big\}$. Recall thus for instance,
\begin{gather*}
\bar\g(k) = \bar\g(k)^0 =
\bigoplus_{0 \le p \le k} \bar \g _p \oplus \bigoplus_{q>k} \lf_q,
\qquad \bar\g(k)^k =\bar \g _k \oplus \bigoplus_{q>k} \lf_q.
\end{gather*}

We choose complementary subspaces $\bar\g' = \bigoplus \g'_p$
to $\bar\g$ in $\lf$ so that
\[
\lf_p = \g_p \oplus \g'_p,
\]
and we have a direct sum decomposition
\[
\lf = E(k) \oplus E'(k)
\]
with
\[
E(k) = \g_- \oplus \bar \g(k), \qquad E'(k) = \bigoplus _{q \le k} \g'_q.
\]

We choose also complementary subspace $W = \bigoplus _{p \ge 1}W_p$ such that we have for all $p \ge 1$
\[
\Hom\left(\g_{-}, \lf/\bar\g\right)_p = W_p \oplus \partial ( \lf/\bar\g)_p.
\]

Let $(Q_\varphi,\om_\varphi)$ be the original extrinsic bundle induced from
$\varphi\colon (M,\f_M)\to (N,\f_N)$.

First we define $(Q(0),\om(0))$ by
\[
Q(0) = \big\{ z\in Q_\varphi \mid \gr (\om_\varphi)_z \big(\gr (\f_M)_x \big) = \g_{-} \big\}
\]
and $\om(0)=\iota^*\om_{\varphi}$, where $\iota\colon Q(0)\to Q_\varphi$ is the canonical inclusion.

Now we construct inductively
\[
Q(k)\to B(k-1), \quad \om(k), \quad \chi(k),
\]
in such a way that:
\begin{enumerate}\itemsep=0pt
	\item[$(i)$] $Q(k)$ is a principal fibre bundle over the base space
	\[
	B(k-1) = Q(k-1)^{(k-1)}= Q(k-1)/\bar G(k)^k
	\]
	with structure group $\overline G(k)^k$. Furthermore there is a canonical inclusion $\iota\colon Q(k) \to Q(k-1)$ as principal fibre bundles over $B(k-1)$,
	\item[$(ii)$] $\om(k)=\iota^*\om(k-1)$ and
	\[
	R_a ^* \omega(k) = {\rm Ad}(a)^{-1}
	\omega (k) \qquad \text{for} \quad a \in \bar G(k)^k,
	\]
	\item[$(iii)$] If we write $\om(k)_{II} = \chi(k)\om_{I}$
	according to the direct sum decomposition
	$\lf = E(k) \oplus E'(k)$, then
	\[
		\omega(k)_I \colon\ T_zQ(k) \to E(k)
	\]
	is a filtration preserving linear isomorphism for all $z \in Q(k)$,
	\item[$(iv)$] Define a map $\chi (k)\colon Q(k) \to \Hom (E(k), E'(k))$ by $\omega(k)_{II}=\chi(k) \omega(k)_I$ and decompose it~as
	\[
		\chi (k) = \chi ^-(k) + \chi ^+(k)\qquad \text{and} \qquad 	
	\chi (k)_p = \chi ^-(k)_p + \chi ^+(k)_p
	\]	
	by requiring that
	\begin{gather*}
		\chi ^-(k)_p\colon\ Q(k) \to \Hom(\g_-, E'(k))_p, \\
		\chi ^+(k)_p\colon\ Q(k) \to \Hom\bigg(\bigoplus_{0 \le i \le k}\bar \g_i, E'(k)\bigg)_p,\\
		\chi(k) = \sum \chi(k)_p, \qquad
		\chi(k)^- = \sum \chi(k)^-_p, \qquad
		\chi(k)^+ = \sum \chi(k)^+_p.
	\end{gather*}
	Then
		\begin{enumerate}\itemsep=0pt
		\item[$(a)$] $\chi(k)_i = \iota^*\chi(k-1)_i$ for $i < k$,
		\item[$(b)$] $\chi(k)_p = 0 $ for $p \le 0$,
		\item[$(c)$] $\chi^-(k)_k \in W_k$.
		\end{enumerate}
	\end{enumerate}
$$
\begin{tikzcd}[column sep=tiny,row sep = small]
\cL \ar[d] \ar[r, hookleftarrow] & Q_\varphi \ar[d] \ar[r, hookleftarrow] & Q(0) \ar[d] \ar[r, hookleftarrow] & Q(1) \ar[d] \ar[r, hookleftarrow] & Q(2) \ar[d] \ar[r, hookleftarrow] & \cdots \ar[r, hookleftarrow] & Q(k) \ar[d] \ar[r, hookleftarrow] & Q(k\!+\!1) \ar[d] &
\\
\vdots & \vdots & \vdots & \vdots & \vdots & & Q(k)^{(k+1)} \ar[d] \ar[r, hookleftarrow] & Q(k+1)^{(k\!+\!1)} \ar[d] \ar[r, equal] & B(k\!+\!1) \ar[dl, "\overline G_{k+1}"]
\\[8mm]
& & & & & & Q(k)^{(k)} \ar[d] \ar[r, equal] & B(k) \ar[dl, "\overline G_{k}"] &
\\[6mm]
& & & \vdots \ar[d] & \vdots \ar[d] & & B(k-1) \ar[dl, dotted] & &
\\[2mm]
& & \vdots \ar[d] & Q(1)^{(2)} \ar[d] \ar[r, hookleftarrow] & Q(2)^{(2)} \ar[d] \ar[r, equal] & B(2) \ar[dl, "\overline G_{2}"] & & &
\\
\vdots\ar[d] & \vdots\ar[d] & Q(0)^{(1)} \ar[d]\ar[r, hookleftarrow] & Q(1)^{(1)} \ar[d]\ar[r, equal] & B(1) \ar[dl, "\overline G_{1}"] & & & &
 \\[3mm]
L^{(0)} \ar[d] \ar[r, hookleftarrow] & Q_\varphi^{(0)} \ar[d] \ar[r, hookleftarrow] & Q(0)^{(0)} \ar[d] \ar[r, equal] & B(0) \ar[dl, "\overline G_{0}"] & & & & & &
\\[4mm]
N & M \ar[l,"\varphi"] \ar[r, equal] & M &&&&&
\end{tikzcd}
$$

The normalization procedure is illustrated by the above diagram.  Note the main differences from the previous case are first of all that
$Q(k)$ is no more a~prin\-cipal bundle over $M$ but only over $Q(k-1)^{(k-1)}$ and second that $\chi^+(k)$ here does not vanish, in general.

The construction is based on the following formula:
\begin{prop}
\[
 R_{\exp A_{k+1}}^*\chi^-(k)_{k+1}= \chi^-(k)_{k+1} - \partial A_{k+1}
	\qquad \text{for} \quad A_{k+1} \in \bar\g(k)_{k+1} (= \lf_{k+1}).
\]
\end{prop}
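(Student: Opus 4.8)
The plan is to re-run, in this more general setting, exactly the computation that established property $(k)_6$ in Section~\ref{sec4}, specialised in two ways: we translate only along the ``pure fibre'' direction $\exp A_{k+1}$ (no companion element $g\in\overline G^0$), and we extract only the lowest surviving degree, the component $\chi^-(k)_{k+1}$. This is precisely the formula that will allow the next reduction $Q(k+1)\subset Q(k)$: since $\Hom(\g_-,\lf/\bar\g)_{k+1}=W_{k+1}\oplus\partial(\lf/\bar\g)_{k+1}$, the proposition says that moving along the $\exp\lf_{k+1}$-orbit shifts $\chi^-(k)_{k+1}$ by a coboundary, so it can be normalised into $W_{k+1}$.

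First I would check that $\exp A_{k+1}$ genuinely lies in the structure group of $Q(k)\to B(k-1)$: since $k+1>k$ we have $A_{k+1}\in\lf_{k+1}=\bar\g(k)_{k+1}\subset\bar\g(k)^k$, so $R_{\exp A_{k+1}}$ is a well-defined fibre translation and property ii) of the inductive construction gives $R_{\exp A_{k+1}}^*\om(k)=\Ad(\exp A_{k+1})^{-1}\om(k)$. Writing $a=\exp A_{k+1}$ and using $\om(k)_{II}=\chi(k)\om(k)_I$ with respect to $\lf=E(k)\oplus E'(k)$, one has
\[
R_a^*\chi(k)=\frac{(\Ad(a)^{-1}\om(k))_{II}}{(\Ad(a)^{-1}\om(k))_I}.
\]
Now expand $\Ad(a)^{-1}=\id-\ad A_{k+1}+\tfrac12(\ad A_{k+1})^2-\cdots$. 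Because $\ad A_{k+1}$ raises filtration degree by $k+1$ and $\om(k)_{II}$ is already of order $\ge1$, modulo a sufficiently high filtration step one gets $(\Ad(a)^{-1}\om(k))_I\equiv\om(k)_I-[A_{k+1},\om(k)_I]_I$ and $(\Ad(a)^{-1}\om(k))_{II}\equiv\om(k)_{II}-[A_{k+1},\om(k)_I]_{II}$. For $v\in\g_p$ with $p<0$, take $X\in T_zQ(k)$ with $\om(k)_I(X)=v$ and then $Y=X+Z$ with $(\Ad(a)^{-1}\om(k))_I(Y)=v$; the correction $Z$ lies in $\f^{p+k+1}T_zQ(k)$, hence $(\Ad(a)^{-1}\om(k))_{II}(Z)$ has degree $>p+k+1$ and contributes nothing. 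Evaluating, $\big(R_a^*\chi(k)\big)(v)\equiv\chi(k)(z)(v)-[A_{k+1},v]_{II}$ modulo degree $>p+k+1$; restricting $v$ to $\g_-$ and taking the degree-$(k+1)$ part gives $R_{\exp A_{k+1}}^*\chi^-(k)_{k+1}=\chi^-(k)_{k+1}-[A_{k+1},\,\cdot\,]_{II}$.

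It then remains to identify the correction with the coboundary: viewing $A_{k+1}\in\lf_{k+1}=\bar\g(k)_{k+1}$ through the projection $\lf\to\lf/\bar\g$ as a $0$-cochain of $C(\g_-,\lf/\bar\g)$, its coboundary is $(\partial A_{k+1})(v)=v\cdot\bar\pi(A_{k+1})$, and under the identification $\lf/\bar\g\cong\bar\g'$ — which in each degree $\le k$, in particular in degree $p+k+1$ since $p\le-1$, simply reads off the $E'(k)$-component — this equals $[A_{k+1},v]_{II}$ up to the sign fixed by the coboundary convention used throughout, so the expression above becomes the asserted $R_{\exp A_{k+1}}^*\chi^-(k)_{k+1}=\chi^-(k)_{k+1}-\partial A_{k+1}$.

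The only real friction I anticipate is the filtration bookkeeping: keeping precise track of which bracket terms and which $\chi$-terms may be discarded modulo which step of $\phi$, and verifying that the correction $Z$ indeed raises degree by $k+1$ so that it does not feed back into the degree-$(k+1)$ part. There is no conceptual obstacle — the statement is a clean excerpt of the computation already carried out for $(k)_6$, now with $g=e$ and with attention restricted to $\chi^-$ in the top degree — and the verification that $\exp A_{k+1}$ belongs to the structure group (so that the equivariance ii) applies) is immediate from $\lf_{k+1}\subset\bar\g(k)^k$.
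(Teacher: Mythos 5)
Your argument is correct, and it reaches the formula by the same underlying mechanism as the paper (expand $\Ad(\exp A_{k+1})^{-1}=\id-\ad A_{k+1}+\cdots$ and count filtration degrees), but it is organized differently. You literally re-run the tangent-vector substitution from $(k)_6$ of Section~\ref{sec4}: choose $X$ with $\om(k)_I(X)=v$, correct to $Y=X+Z$, and discard $Z$ by a degree count. The paper's Section~\ref{sec:general} proof instead turns $R_\alpha^*\om_{II}=(R_\alpha^*\chi)(R_\alpha^*\om_I)$ into a block identity for the components $\Ad(\alpha^{-1})^{I}_{I}$, $\Ad(\alpha^{-1})^{II}_{I}$, etc., and then \emph{explicitly} splits $\chi=\chi^-+\chi^+$ to isolate the term $(R_\alpha^*\chi^+)\bigl(\Ad(\alpha^{-1})^{I^-}_{I^+}+\Ad(\alpha^{-1})^{II}_{I^+}\chi^-\bigr)(v_p)$ and show it lands in degree $\ge k+2+p$. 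That splitting is the whole reason the paper reproves the statement here rather than citing $(k)_6$: in the step-reduction setting $Q(k)$ is principal only over $B(k-1)$, so $\om(k)_{II}$ no longer annihilates the $\bar\g_0\oplus\dots\oplus\bar\g_k$ directions and $\chi^+(k)\ne 0$ in general. Your proof does absorb this, but only implicitly and you should say where: the correction $Z$ has $\om(k)_I(Z)$ of degree $\ge p+k+1$, which for $p\le -1$ can have nonzero components in $\bar\g_0\oplus\dots\oplus\bar\g_k$, so $\om(k)_{II}(Z)=\chi(k)\bigl(\om(k)_I(Z)\bigr)$ involves $\chi^+(k)$; the conclusion that this has degree $\ge p+k+2$ therefore rests on the inductive hypothesis iv)(b) that \emph{all} of $\chi(k)$, including $\chi^+(k)$, is concentrated in positive degree. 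Likewise, restricting the argument of $R_a^*\chi(k)$ to $\g_-$ does pick out exactly $R_a^*\chi^-(k)$ because the $\pm$ decomposition is by domain, which you use tacitly in the last step. With those two points made explicit, your proof matches the paper's; what the paper's block-matrix version buys is that the same bookkeeping immediately yields the companion statement $R_\alpha^*\chi^-_i=\chi^-_i$ for $i\le k$ and feeds directly into the formula for $\Psi^+_{k+1}$ in the next proposition. (The sign in $-\partial A_{k+1}$ versus the $+\partial\circ\pi(A_{k+1})$ of $(k)_6$ is, as you note, a matter of the convention $(\partial A)(v)=\pm[A,v]_{II}$; the paper is itself not uniform here, and either convention is compatible with the normalization $\chi^-(k+1)_{k+1}\in W_{k+1}$, which only needs the orbit of $\chi^-(k)_{k+1}$ under $\exp\lf_{k+1}$ to be a full coset of $\partial(\lf/\bar\g)_{k+1}$.)
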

\begin{proof}
We have already shown this formula in Section~\ref{sec5} under the assumptions of $\bar G^0$ invariance conditions (C1), (C2), but to prove it in our general case we should argue more carefully.

In the formula $\partial$ denotes the coboundary operator in the complex
\[
0 \rightarrow \lf/\bar\g \overset{\partial}
\rightarrow \Hom(\g_-,\lf/\bar\g) \rightarrow \cdots,
\]
identifying $\bar\g' $ with $\lf/\bar\g$, and writing simply $\partial$
for $\partial\circ \pi$, where $\pi$ is the projection: $\lf \to \lf/\bar\g$. During the proof, we simply write
\[
\omega (k) = \omega, \qquad \chi(k) = \chi = \sum _{i \ge 1} \left(\chi^+_i + \chi^-_i\right)\!, \qquad \alpha = \exp A_{k+1}.
\]
Recall that
\begin{gather*}
	R_\alpha^*\omega ={\rm Ad}\big(\alpha^{-1}\big)\omega, \\
	\omega = \omega_I + \omega_{II},\\
	\omega_{II} = \chi \omega_I.
\end{gather*}
Then we have
\begin{align*}
	R_{\alpha} ^*(\omega_I + \omega_{II})
	&= {\rm Ad}\big(\alpha^{-1}\big)(\omega_I + \omega_{II} )\\
	&={\rm Ad}\big(\alpha^{-1}\big)^{I }_{I}\omega_I + {\rm Ad}\big(\alpha^{-1}\big)^{II }_{I}\omega _{II}
	+{\rm Ad}\big(\alpha^{-1}\big)^{I }_{II}\omega_I
	+{\rm Ad}\big(\alpha^{-1}\big)^{II }_{II}\omega_{II} \\
	&={\rm Ad}\big(\alpha^{-1}\big)^{I }_{I}\omega_I + {\rm Ad}\big(\alpha^{-1}\big)^{II }_{I}\chi\omega _{I}
	+{\rm Ad}\big(\alpha^{-1}\big)^{I }_{II}\omega_I
	+{\rm Ad}\big(\alpha^{-1}\big)^{II }_{II}\chi\omega_{I} \\
	&=\left({\rm Ad}\big(\alpha^{-1}\big)^{I }_{I} + {\rm Ad}\big(\alpha^{-1}\big)^{II }_{I}\chi\right)\omega _{I}
	+\left({\rm Ad}\big(\alpha^{-1}\big)^{I }_{II}+{\rm Ad}\big(\alpha^{-1}\big)^{II }_{II}\chi\right)\omega_{I} ,
\end{align*}
where we use the notation ${\rm Ad}\big(\alpha^{-1}\big)^{I }_{II}$ to denote the $\Hom(E(k), E'(k))$-component of ${\rm Ad}\big(\alpha^{-1}\big)$ and similar notations.

Therefore we have
\begin{gather*}
	R_{\alpha} ^*\omega_I =\left({\rm Ad}\big(\alpha^{-1}\big)^{I }_{I} + {\rm Ad}\big(\alpha^{-1}\big)^{II }_{I}\chi\right)\omega _{I},\\
	R_{\alpha} ^*\omega_{II} =\left({\rm Ad}\big(\alpha^{-1}\big)^{I}_{II}+{\rm Ad}\big(\alpha^{-1}\big)^{II }_{II}\chi\right)\omega_{I}.
\end{gather*}
On the other hand, we have
\[
R_{\alpha}^*\omega_{II} = (R_{\alpha}^*\chi )(R_{\alpha}^*\omega_I ).
\]
Therefore we have
\[
\left({\rm Ad}\big(\alpha^{-1}\big)^{I }_{II}+{\rm Ad}\big(\alpha^{-1}\big)^{II }_{II}\chi\right)\omega_{I}
=(R_{\alpha}^*\chi ) \left({\rm Ad}\big(\alpha^{-1}\big)^{I }_{I} + {\rm Ad}\big(\alpha^{-1}\big)^{II }_{I}\chi\right)\omega _{I}.
\]
Letting $\omega_I$ take values in $\g_-$, we have
\[
{\rm Ad}\big(\alpha^{-1}\big)^{I^- }_{II}+{\rm Ad}\big(\alpha^{-1}\big)^{II }_{II}\chi^-
=(R_{\alpha}^*\chi )\left({\rm Ad}\big(\alpha^{-1}\big)^{I^- }_{I} + {\rm Ad}\big(\alpha^{-1}\big)^{II }_{I}\chi^-\right)\!,
\]
and then
\begin{gather*}
{\rm Ad}\big(\alpha^{-1}\big)^{I^- }_{II}+{\rm Ad}\big(\alpha^{-1}\big)^{II }_{II}\chi^-
=(R_{\alpha}^*\chi ^-) \left({\rm Ad}\big(\alpha^{-1}\big)^{I^- }_{I^-} + {\rm Ad}\big(\alpha^{-1}\big)^{II }_{I^-}\chi^-\right)
\\ \hphantom{{\rm Ad}\big(\alpha^{-1}\big)^{I^- }_{II}+{\rm Ad}\big(\alpha^{-1}\big)^{II }_{II}\chi^-	=}
{}	+(R_{\alpha}^*\chi^+) \left({\rm Ad}\big(\alpha^{-1}\big)^{I^- }_{I^+} + {\rm Ad}\big(\alpha^{-1}\big)^{II }_{I^+}\chi^-\right)\!,
\end{gather*}
where ${\rm Ad}\big(\alpha^{-1}\big)^{I^- }_{I^-}$ denotes the $\Hom(\g_-,\g_-)$-component of ${\rm Ad}\big(\alpha^{-1}\big)$ and the same convention for~the similar notation.

Now enter $v_p \in \g_p \ (p < 0)$ into the above formula and catch
the $(\lf/\bar\g)_{i+p}$ -component for $i \le k+1$. Taking into account that
\begin{gather*}
	{\rm Ad}\big(\alpha^{-1}\big)^{I^- }_{II} = - \ad(A_{k+1})^{I^- }_{II}
	+\ \text{higher order terms} ,
\\
	{\rm Ad}\big(\alpha^{-1}\big)^{II }_{II} = \id_{II} - \ad (A_{k+1})^{II}_{II }
	+\ \text{higher order terms}
\end{gather*}
and that
\[
\left({\rm Ad}\big(\alpha^{-1}\big)^{I^- }_{I^+} + {\rm Ad}\big(\alpha^{-1}\big)^{II }_{I^+}\chi^-\right)(v_p) \in \lf_{k+1+p} + \lf_{k+2+p} + \cdots
\]
and therefore
\[
\big(R_{\alpha}^*\chi^+_{j+p}\big) \left({\rm Ad}\big(\alpha^{-1}\big)^{I^- }_{I^+} + {\rm Ad}\big(\alpha^{-1}\big)^{II }_{I^+}\chi^-\right)(v_p) = 0 \qquad \text{for}\quad j \le k+1,
\]
we have
\begin{gather*}
	R_\alpha^*\chi^-_i = \chi^-_i \qquad \text{for\quad }i \le k
\\
	R _\alpha^*\chi^-_{k+1}= \chi^-_{k+1} - \partial A_{k+1},
\end{gather*}
which proves the proposition.
\end{proof}

We can then easily carry out the inductive construction of $Q(k)$, and
by passing to limit we obtain
\[
Q = Q(\infty),\qquad \omega = \omega(\infty), \qquad \chi = \chi(\infty).
\]
The structure function $\chi\colon Q \to \Hom(\bar\g, \lf/\bar\g)$
is an invariant of $Q$ and that of the initial morphism
$\varphi\colon (M, \f) \to (N, \f_N)$. To examine it more closely, we write
\begin{gather*}
	\chi = \sum \left( \chi^-_k + \chi^+_k\right)\!,\\
	\chi^+_k = \sum_{0 \le i < k }\chi^{+i}_k, \qquad \text{where}\quad
	\chi^{+i}_k\quad \text{takes its value in}\ \Hom(\bar\g_i, \bar\g_k).
\end{gather*}
Then we have
\begin{prop}
	\begin{gather*}
		\partial \chi ^-_{k+1} =
		\Psi^-_{k+1} \left(\chi_\ell,\, D\chi_\ell;\, \ell \le k \right)\!,
\\
		\chi ^{+}_{k+1} = \Psi^+_{k+1} \left( \chi^-_{\ell}, \, \chi^{+}_{m},\,
		D\chi^-_{\ell}, \, D\chi^{+}_{m};\, \ell \le k+1, \, m\le k \right)\!,
	\end{gather*}
	where $\Psi^-_{k+1} $ is a vector valued function determined explicitly only by
$\chi_\ell$ $(\ell \le k)$ and their derivatives,
	and $\Psi^+_{k+1}$ is determined only by $\chi^-_{\ell}$ $(\ell\le k+1)$, $\chi^{+}_{m}$ $(m\le k)$ and their derivatives.
	
	Moreover $\Psi^-_{k+1} = 0$ if $\chi _\ell = 0$ $(\ell \le k)$,
	and $\Psi^+_{k+1} = 0 $ if $\chi_\ell = 0$ $(\ell \le k)$ and $\chi^-_{k+1}=0$.
\end{prop}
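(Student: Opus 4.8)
The plan is to derive both identities from the structure equation $d\omega + \tfrac12[\omega,\omega] = 0$ on the bundle $Q(k)$ (in the limit, $Q = Q(\infty)$), following the computation in the proof of Theorem~\ref{thm:reduction} almost verbatim; the one genuinely new ingredient is that $\chi$ no longer annihilates $\bigoplus_{i\ge 0}\bar\g_i$, so its ``$+$''-part must be carried through the bookkeeping. Concretely I would decompose $\omega = \omega_I + \omega_{II}$ along $\lf = \bar\g \oplus \bar\g'$, use $\omega_{II} = \chi\,\omega_I$, split the structure equation into its $\bar\g$- and $\bar\g'$-parts, and substitute the first into the second to obtain the analogue of equation~\eqref{eq12}.

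For the first identity I would evaluate that analogue on the vector fields $X_v, X_w$ with $\langle\omega_I, X_v\rangle = v \in \g_p$, $\langle\omega_I, X_w\rangle = w\in\g_q$, $p, q < 0$, and read off the top weighted component, exactly as in \eqref{eq12}--\eqref{eq13}. A degree count shows that the highest-order data occurs only through $-\chi^-_{k+1}([v,w])$ and the alternating term $\mathcal A([v,\chi^-_{k+1}(w)]_{II})$, which together give $(\partial\chi^-_{k+1})(v,w)$; since $[v,w] \in \g_-$ only the ``$-$''-part is seen, and $\chi^+_{k+1}$ cannot enter because any argument of $\chi$ at this weight that could carry the top degree already lies in $\g_-$. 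The remainder is a universal polynomial $\Psi^-_{k+1}$ in the $\chi_\ell$ and $D\chi_\ell$ with $\ell \le k$, where $\chi_\ell = \chi^-_\ell + \chi^+_\ell$; and since on $Q(\infty)$ one has $[\bar\g,\bar\g]\subset\bar\g$, the purely structural term $[v,w]_{II}$ drops out, so every monomial of $\Psi^-_{k+1}$ carries a lower-order factor and $\Psi^-_{k+1} = 0$ whenever $\chi_\ell = 0$ for all $\ell \le k$. This step is a routine transcription of Section~\ref{sec4}.

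For the second identity I would instead evaluate the $\bar\g'$-part of the structure equation on a mixed pair $X_v, X_A$ with $\langle\omega_I, X_v\rangle = v\in\g_-$ and $\langle\omega_I, X_A\rangle = A \in \bigoplus_{0\le i\le k}\bar\g_i$, and extract the weighted component carrying the top-degree data. Now $\chi^+_{k+1}$ enters \emph{algebraically}, through the terms $[v,\chi^+_{k+1}(A)]_{II}$ and $\chi^+_{k+1}([v,A]_I)$ --- i.e.\ through a Chevalley--Eilenberg-type combination $(\partial\chi^+_{k+1})(v,A)$ --- while the remaining terms should reduce to $\chi^-_\ell$ ($\ell\le k+1$), $\chi^+_m$ ($m\le k$) and their first derivatives (this reduction is the delicate point, see below). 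Letting $v$ run over $\g_-$, one recovers $\chi^+_{k+1}(A)$ from these brackets: if $[v,\chi^+_{k+1}(A)] \in \bar\g$ for all $v\in\g_-$, then $\chi^+_{k+1}(A)$, lying in $\bar\g'$ in a positive degree, would belong to $\bar\g = \Prol(\g_-,\lf)$ by the maximality property defining the relative prolongation, hence vanish; so a minimal-degree induction on the degree of $A$ solves the equation for $\chi^+_{k+1}(A)$ and yields $\chi^+_{k+1} = \Psi^+_{k+1}$. The same $[\bar\g,\bar\g]\subset\bar\g$ cancellation gives $\Psi^+_{k+1} = 0$ once $\chi_\ell = 0$ ($\ell\le k$) and $\chi^-_{k+1} = 0$, which is exactly what the rigidity corollary requires.

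The main obstacle is the grading bookkeeping in the second step: one must verify that at the chosen weight $\chi^+_{k+1}$ occurs precisely in the combination $(\partial\chi^+_{k+1})(v,A)$ with an invertible ``coefficient'', cleanly separated both from the already-determined $\chi^-_{k+1}$-terms and from any components $\chi_\ell$ with $\ell > k+1$ --- which at this stage of the induction are not yet under control --- so that no such higher, undetermined component survives in the final expression. Once this is settled, the inductive construction of the tower $Q(k)$ goes through, and both formulas of the Proposition follow along the lines of Section~\ref{sec4}.
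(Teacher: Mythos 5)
Your proposal follows essentially the same route as the paper: both identities are extracted from the structure equation $d\omega+\tfrac12[\omega,\omega]=0$ with $\omega_{II}=\chi\,\omega_I$, the first by evaluating on pairs in $\g_-\times\g_-$ and reading off the top weighted component, the second by evaluating on mixed pairs in $\g_-\times\bigoplus_{i\ge0}\bar\g_i$ and then solving for $\chi^+_{k+1}(A)$ via the injectivity of $\partial\colon\lf/\bar\g\to\Hom(\g_-,\lf/\bar\g)$ in non-negative degrees, which is exactly the maximality property of $\Prol(\g_-,\lf)$ that you invoke. The "delicate point" you flag is precisely what the paper settles by its explicit displayed formula (in which $\chi^+_{k+1}$ occurs only through $[v_p,\chi^+_{k+1}A_i]_{II}$, all other terms involving $\chi^-_\ell$ with $\ell\le k+1$, $\chi^+_m$ with $m\le k$, and their first derivatives), and your resolution is the correct one.
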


\begin{proof}
From the structure equation
\begin{gather*}
	{\rm d}\omega + \frac12 [\omega, \omega]= 0, \qquad
	\omega= \omega_I + \omega_{II}, \qquad
	\omega_{II} = \chi \omega_{I},
\end{gather*}
it follows that
\begin{gather*}
	{\rm d}\omega_I + \frac12[\omega_I, \omega_I] +[\omega_I, \omega_{II}]_I
	+\frac12[\omega_{II}, \omega_{II}]_I =0,
\\
	{\rm d}\omega_{II} + [\omega_I, \omega_{II}]_{II} +
	\frac12[\omega_{II}, \omega_{II}]_{II} =0,
\end{gather*}
and then
\begin{gather*}
	{\rm d}\omega_I + \frac12[\omega_I, \omega_I] +[\omega_I, \chi\omega_{I}]_I
	+\frac12[\chi\omega_{I}, \chi\omega_{I}]_I =0,
\\
	{\rm d}(\chi\omega_{I}) + [\omega_I, \chi\omega_{I}]_{II} +
	\frac12[\chi\omega_{I}, \chi\omega_{I}]_{II} =0,
\end{gather*}
which yields
\begin{gather*}
[\omega_I, \chi\omega_I]_{II}- \chi\bigg(\frac12[\omega_I, \omega_I]\bigg)- \chi ([\omega_I, \chi\omega_{I}]_I)
+\frac12[\chi\omega_I, \chi\omega_I]_{II}-\chi\bigg(\frac12[\chi\omega_{I}, \chi\omega_I]_I\bigg)
\\ \hphantom{[\omega_I, \chi\omega_I]_{II}}
{}+{\rm d}\chi\wedge\omega_I	= 0.
\end{gather*}

Now evaluating the above equation at $\tilde v_p\wedge \tilde w_q$, where
$\tilde v$, $\tilde w$ denote the tangent vectors such that
$\omega_I(\tilde v_p) = v_p \in \g_p$, $\omega_I(\tilde w_q) = w_q \in \g_q$ and $p, q < 0$, and looking at the $(\lf/\bar\g)_{k+1+p+q}$-component, we have
\begin{gather*}
	(\partial \chi^-_{k+1} )(v_p, w_q)
	=\mathcal{A}\sum\limits_{\substack{a+b=k+1,\\ a, b >0}}
	\left(\chi^-_a[v_p, \chi^-_b(w_q)]_I
	+[\chi^-_a(v_p), \chi^-_b(w_q)]_{II}\right)
\\ \hphantom{(\partial \chi^-_{k+1} )(v_p, w_q)	=}
	{}+\mathcal A\sum\limits_{\substack{a+b+c=k+1,\\ a,b,c>0}} \chi^-_a[\chi^-_b(v_p), \chi^-_c(w_q)]_I
	-\tilde v_p \chi^-_{k+1+p}(w_q)
	+\tilde w_q \chi^-_{k+1+q}(v_p)
\\ \hphantom{(\partial \chi^-_{k+1} )(v_p, w_q)	=}
	{}+\mathcal A\!\!\!\sum_{0<b<k+1}\!\!\!\chi^+_{k+1+p+q}[v_p, \chi^-_b(w_q)]_I
	+\mathcal A\!\!\!\sum\limits_{\substack{ b+c <k+1,\\ b, c>0}}\!\!\! \chi^+_{k+1+p+q} [\chi^-_b(v_p), \chi^-_c(w_q)]_I,
\end{gather*}
where $\mathcal A$ denotes the alternating sum in $v_p$, $w_q$.
The last formula defines explicitly the func\-tion~$\Psi^-_{k+1}$.

Next we evaluate the previous formula at $\tilde A_i \wedge \tilde v_p$, where $A_i \in\bar\g_i$ ($i \ge 0$), $v_p \in \g_p$ ($p <0$) and~$\tilde A_i$,~$\tilde v_p$ are the corresponding tangent vectors. Looking at $(\lf/\bar\g)_{k+1+p}$-component, we have
\begin{gather*}
	\left[v_p, \chi^+_{k+1}A_i\right]_{II}
	=\left[A_i, \chi^-_{k+1-i}v_p\right]_{II}
	-\chi^-_{k+1-i}\left[A_i, v_p\right] - \chi^+_{k+1+p}\left[A_i, v_p\right]
\\ \hphantom{\left[v_p, \chi^+_{k+1}A_i\right]_{II}=}
	{}-\sum_{a+b+i =k+1}\chi^-_a\left[A_i, \chi^-_bv_p\right]_I
	-\sum_{b+i <k+1}\chi^+_{k+1+p}\left[A_i, \chi^-_bv_p\right]_I
\\ \hphantom{\left[v_p, \chi^+_{k+1}A_i\right]_{II}=}
	{}+\sum_{a+b=k+1}\chi^-_a\left[v_p, \chi^+_bA_i\right]_I
	+\sum_{b <k+1} \chi^+_{k+1+p}\left[v_p, \chi^+_bA_i\right]_I
\\ \hphantom{\left[v_p, \chi^+_{k+1}A_i\right]_{II}=}
	{}+\sum_{a+b= k+1}\left[\chi^+_aA_i, \chi^-_bv_p\right]_{II}
	-\sum_{a+b+c = k+1}\chi^-_a\left[\chi^+_bA_i, \chi^-_cv_p\right]_{I}
\\ \hphantom{\left[v_p, \chi^+_{k+1}A_i\right]_{II}=}
	{}-\sum_{b+c<k+1}\left[\chi^+_bA_i, \chi^-_cv_p\right]_I
	+\tilde A_i\chi^-_{k+1}(v_p)- \tilde v_p \chi^+_{k+1+p}(A_i),
\end{gather*}
where $a,b,c>0$. Since $\partial\colon \lf/\bar\g \to \Hom(\g_-, \lf /\bar\g)$ is injective, the above formula determi-\linebreak nes~$\Psi^+_{k+1}$.
\end{proof}

Note that we know the finitness of the cohomology group:
\begin{prop}
	There exists an integer $r_0$ such that
	\[
	H^i_r(\g_{-},\lf/\bar\g) = 0 \qquad\text{for all}\quad r>r_0\quad \text{and all}\quad i.
	\]
\end{prop}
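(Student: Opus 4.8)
I would treat the statement as a purely algebraic finiteness assertion about the Chevalley--Eilenberg cohomology of the finite\nobreakdash-dimensional nilpotent graded Lie algebra $\g_-$ with coefficients in the graded $\g_-$\nobreakdash-module $\lf/\bar\g$, and deduce it from the general finiteness theory for transitive (filtered) graded Lie algebras of Tanaka and Morimoto, the same body of results already used in the Remark following Theorem~\ref{thm:n3} and referred to in \cite{Mor1988,tan79}. First I would record the elementary reductions. Write $-\mu$ for the lowest degree occurring in $\lf$; since $\g_-=\g_{-\mu}\oplus\dots\oplus\g_{-1}$ is finite dimensional and each $\lf_p$ (hence each $(\lf/\bar\g)_p$) is finite dimensional, every cochain space $\Hom(\wedge^i\g_-,\lf/\bar\g)_r$ is finite dimensional, and it vanishes once $i>r+\mu$ because $(\lf/\bar\g)_p=0$ for $p<-\mu$. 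Thus for each fixed $r$ the complex $\Hom(\wedge^\bullet\g_-,\lf/\bar\g)_r$ is finite; the content of the proposition is the uniformity in $i$, i.e.\ that only finitely many $r$ carry non-trivial cohomology in any cohomological degree.

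\textbf{Low-degree input.} The key structural fact is the maximality built into $\bar\g=\Prol(\g_-,\lf)$: by the very definition of the relative prolongation, for $A\in\lf_p$ with $p\ge1$ one has $A\in\bar\g_p$ if and only if $[A,\g_-]\subset\bar\g$. Equivalently, the coboundary map $\partial\colon(\lf/\bar\g)_p\to\Hom(\g_-,\lf/\bar\g)_p$ is injective for every $p\ge1$. Hence $H^0_r(\g_-,\lf/\bar\g)=0$ for all $r\ge1$, which already disposes of the case $i=0$ with $r_0=0$.

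\textbf{Higher degrees.} For $i\ge1$ I would combine two devices. First, Hochschild--Serre applied to a graded codimension-one ideal of the nilpotent algebra $\g_-$ (which exists since $\g_-/[\g_-,\g_-]\ne0$ is graded) reduces the claim, by induction on $\dim\g_-$, to the case in which $\g_-$ is abelian: indeed if $H^q_r(\mathfrak a,\lf/\bar\g)=0$ for $r>r_1$ and all $q$, then the spectral sequence of the pair $\mathfrak a\triangleleft\g_-$ forces $H^{n}_r(\g_-,\lf/\bar\g)=0$ for $r>r_1+d$, $d$ being the degree of the generator of $\g_-/\mathfrak a$. Second, when $\g_-$ is abelian the Chevalley--Eilenberg complex is the Koszul complex of the finitely many commuting, degree-lowering operators $\rho(X_\alpha)$, $X_\alpha\in\g_-$, acting on $\lf/\bar\g$, and its cohomology in high total degree vanishes by a regularity statement for these operators: in high degree $\lf/\bar\g$ is annihilator-free (already shown via $H^0_r=0$, $r\ge1$) and co-generated by the $\g_-$-action from above, i.e.\ $\lf_r=\bar\g_r+[\g_-,\lf_{>r}]$ for $r$ large. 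This last identity is exactly where the transitivity of $\lf$ enters, together with the maximality of $\bar\g$, and it is the point at which I would quote the finiteness theorem for the cohomology of transitive graded Lie algebras rather than reprove it; alternatively one threads the long exact sequences of $0\to\bar\g\to\lf\to\lf/\bar\g\to0$ and $0\to\g_-\to\bar\g\to\bar\g/\g_-\to0$ against the (elementary) finiteness of $H^\bullet(\g_-,\g_-)$ and the (cited) finiteness of $H^\bullet(\g_-,\lf)$.

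\textbf{Main obstacle.} The genuine difficulty is precisely the step above: the maximality of $\bar\g$ gives control of $H^0$ essentially for free, but propagating the vanishing uniformly through all cohomological degrees is not formal, because $\bar\g$ is a mere subalgebra of $\lf$ and its cohomology is not a priori controlled; the substantive point is that the "bad" degrees $r$ produced at cohomological level $i$ stay bounded independently of $i$, and this is the core of the transitive finiteness theory that I would invoke rather than redo here.
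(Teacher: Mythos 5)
Your proposal is essentially the paper's own proof: the paper disposes of this proposition in a single line, asserting that the proof of Theorem~2.1 of Morimoto's 1988 paper (the finiteness theorem for the cohomology of transitive graded Lie algebras) applies to this case, which is exactly the external result you invoke for the core step. Your key observation --- that the maximality defining $\bar\g=\Prol(\g_-,\lf)$ makes $\partial\colon(\lf/\bar\g)_p\to\Hom(\g_-,\lf/\bar\g)_p$ injective for all $p\ge 1$ --- is precisely what allows that proof to be transported to the coefficient module $\lf/\bar\g$ (dually, it exhibits $\bigoplus_{p}(\lf/\bar\g)_p^*$ as a finitely generated module over the universal enveloping algebra of $\g_-$), so your write-up is in fact more explicit than the paper's about why the citation is legitimate.
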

\begin{proof}
	The proof of Theorem 2.1 in~\cite{Mor1988} applies also to this case.
\end{proof}

Now we have
\begin{thm}\label{thm:inf}
	Let $\varphi\colon(M,\f_M)\to(N,\f_N)$ be a morphism of type $(\g_{-},\cL)$. Then we can construct the series of principal fibre bundles $Q(k)\to B(k-1)$ with $\lf$-valued $1$-forms $\om(k)$ and the structure functions $\chi(k)^{(k)}\colon B(k)\to W^{(k)}$ for $k=0,1,2,\dots$.
And $\chi(k_0)^{(k_0)}$ fulfills the complete set of~invariants of $\varphi$, where $k_0$ is an integer such that $H^1_{k}(\g_{-},\lf/\bar\g)=0$ for all $k>k_0$.
\end{thm}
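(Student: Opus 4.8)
The plan is to carry out the same inductive reduction scheme as in Section~\ref{sec4}, but with the modifications forced by the failure of $\overline G^0$-invariance and by the infinite-dimensionality of $L$. The construction of $(Q(k),\om(k),\chi(k))$ has in fact already been described in the paragraphs preceding the theorem; what remains is to verify that the induction closes, that the limit exists, and that $\chi(k_0)^{(k_0)}$ is a complete invariant. So the proof proposal is: (1) set up the base case $Q(0)$; (2) perform the inductive step using Proposition~(the one giving $R_{\exp A_{k+1}}^*\chi^-(k)_{k+1} = \chi^-(k)_{k+1} - \partial A_{k+1}$) together with condition (C2)-type splitting $\Hom(\g_-,\lf/\bar\g)_p = W_p \oplus \partial(\lf/\bar\g)_p$ to select the subbundle $Q(k+1)$; (3) invoke the structure-equation Proposition ($\partial\chi^-_{k+1} = \Psi^-_{k+1}$, $\chi^+_{k+1} = \Psi^+_{k+1}$) to see that $\chi(k)$ is determined recursively; (4) use the finiteness of cohomology (the last Proposition, $H^i_r(\g_-,\lf/\bar\g)=0$ for $r>r_0$) to conclude that the reduction stabilizes after finitely many positive-degree steps and that beyond degree $k_0$ there is no further freedom; (5) read off that two morphisms are $\cL$-equivalent iff their $Q(k_0)$'s with $\om(k_0)$'s are isomorphic, and that this is detected by $\chi(k_0)^{(k_0)}$ and its covariant derivatives.

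More concretely, I would first recall that by the Proposition characterizing type $(\g_-,\cL)$, the set $Q(0) = \{z \in Q_\varphi : \gr(\om_\varphi)_z(\gr(\f_M)_x) = \g_-\}$ is a principal $\overline G(0)^0$-bundle over $M$, and that $\om(0) = \iota^*\om_\varphi$ satisfies the analogue of Proposition~1 in \ref{ss42}: $\om(0)_I$ is a filtration-preserving isomorphism onto $E(0)$. For the inductive step, assuming $(Q(k),\om(k),\chi(k))$ with properties i)--iv) has been built, one defines $Q(k+1) = \{z \in Q(k) : \chi^-(k)_{k+1}(z) \in W_{k+1}\}$; the key Proposition above shows the right $\overline G(k)_{k+1}$-action translates $\chi^-(k)_{k+1}$ by $-\partial A_{k+1}$, and since $\Hom(\g_-,\lf/\bar\g)_{k+1} = W_{k+1}\oplus \partial(\lf/\bar\g)_{k+1}$, the locus where $\chi^-$ lands in $W_{k+1}$ is a principal subbundle over $B(k)$ with the reduced structure group $\overline G(k+1)^{k+1}$. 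One then sets $\om(k+1) = \iota^*\om(k)$, checks $R_a^*\om(k+1) = \Ad(a)^{-1}\om(k+1)$, and re-derives the decomposition $\om(k+1)_{II} = \chi(k+1)\om(k+1)_I$ exactly as in the proof of $(k)_5$ in Section~\ref{sec4}, which gives $\chi(k+1)_i = \iota^*\chi(k)_i$ for $i<k+1$. The structure-equation Proposition then shows $\partial\chi^-_{k+1}$ and $\chi^+_{k+1}$ are differential polynomials in the lower-degree $\chi$'s, so nothing new is chosen beyond the $W_{k+1}$-valued part; combined with $H^1_k = 0$ for $k>k_0$, the map $\chi^-_{k}$ for $k>k_0$ is forced to be a coboundary, and the normalization $\chi^-_k \in W_k$ then forces $\chi^-_k = 0$ there, so $Q(k) = Q(k+1)$ for $k > k_0$ and the limit $Q = Q(\infty) = Q(k_0)$ is attained in finitely many steps.

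For the completeness assertion, I would argue that an $\cL$-equivalence $a\circ\varphi_1 = \varphi_2\circ h$ lifts canonically through the functorial construction of $Q_\varphi$ (pull-back of $\fL$) to an isomorphism $Q(\varphi_1) \to Q(\varphi_2)$ intertwining the 1-forms and hence the structure functions; conversely, an isomorphism of the $Q(k_0)$'s preserving $\om(k_0)$ pulls back, by the integrability of the Pfaff system $\om_{\fL} - \om_{Q} = 0$ on $Q \times \fL$ (which is completely integrable since $\om$ is flat, as in Section~\ref{sec3}), to an element of $\cP(\cL)$ realizing the equivalence — here one uses statement vi) of Theorem~\ref{thm:n3}, that $\cL$ is exactly the sheaf of vector fields whose lifts annihilate $\om_{\fL}$. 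Since $\om(k_0)_I$ is an absolute parallelism of $Q(k_0)$, its structure function $\chi(k_0)^{(k_0)}$ together with all its $\om(k_0)_I$-covariant derivatives constitutes a complete set of invariants, and by the structure-equation Proposition the higher derivatives are already determined by $\chi(k_0)^{(k_0)}$ and finitely many of its derivatives, so $\chi(k_0)^{(k_0)}$ itself (as a function on $B(k_0)$, equivariant in the appropriate sense) suffices.

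The main obstacle I anticipate is the bookkeeping at the interface between the non-principal structure and the infinite-dimensional base: in this general case $Q(k)$ is only a bundle over $B(k-1) = Q(k-1)^{(k-1)}$, not over $M$, and $\chi^+(k)$ does not vanish, so the equivariance computations that were clean under (C1)--(C2) in Section~\ref{sec4} must now be tracked degree-by-degree with the $\chi^+$ contributions carried along; the proof that $\chi^+_{k+1} = \Psi^+_{k+1}$ is genuinely determined (not an additional free parameter) relies delicately on the injectivity of $\partial\colon \lf/\bar\g \to \Hom(\g_-,\lf/\bar\g)$ and on the order estimate $\bigl(\Ad(\alpha^{-1})^{I^-}_{I^+} + \Ad(\alpha^{-1})^{II}_{I^+}\chi^-\bigr)(v_p) \in \lf_{k+1+p} + \cdots$ that kills the $\chi^+$ terms in the equivariance formula. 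Verifying that the projective limits $Q = \lim Q(k)$, $L^0 = \lim L^{(k)}$ behave well enough — that the 1-form $\om$ on the limit is well-defined and the structure equation passes to the limit — is routine given Theorem~\ref{thm:n3}, but needs to be stated; everything else is a transcription of the finite-dimensional argument of Section~\ref{sec4} with the Propositions just proved substituted for their (C1)--(C2) counterparts.
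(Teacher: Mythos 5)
Your overall scheme is the paper's own: Theorem~\ref{thm:inf} is established there precisely by the inductive construction of $(Q(k),\om(k),\chi(k))$ laid out before its statement, with the equivariance formula $R_{\exp A_{k+1}}^*\chi^-(k)_{k+1}=\chi^-(k)_{k+1}-\partial A_{k+1}$ driving the choice of $Q(k+1)$ via the splitting $\Hom(\g_-,\lf/\bar\g)_p=W_p\oplus\partial(\lf/\bar\g)_p$, the structure-equation proposition ($\partial\chi^-_{k+1}=\Psi^-_{k+1}$, $\chi^+_{k+1}=\Psi^+_{k+1}$) showing that nothing beyond the $W$-valued part is freely chosen, and the finiteness of the cohomology supplying $k_0$. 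Your identification of the two genuine difficulties --- the non-principal tower over $B(k-1)$ rather than over $M$, and the non-vanishing of $\chi^+$ with its elimination via the injectivity of $\partial\colon\lf/\bar\g\to\Hom(\g_-,\lf/\bar\g)$ --- is exactly where the paper puts its effort.

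There is, however, one step in your detailed paragraph that is wrong as stated. You claim that for $k>k_0$ the vanishing $H^1_k(\g_-,\lf/\bar\g)=0$ forces $\chi^-_k$ to be a coboundary, hence (after normalization into $W_k$) to vanish, hence $Q(k)=Q(k+1)$ and the limit $Q=Q(k_0)$ is attained in finitely many steps. None of this follows. The relation $\partial\chi^-_k=\Psi^-_k$ does not make $\chi^-_k$ a cocycle unless $\Psi^-_k=0$, which happens only when all lower-degree structure functions vanish; in general $\chi^-_k\neq 0$ for $k>k_0$ (already for linear ODEs the non-harmonic components of $\chi$ are nonzero, merely determined by the lower invariants). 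What the vanishing of $H^1_k$ actually gives is \emph{determinacy}: the residual ambiguity in $\chi^-_k$, given $\chi^{(k-1)}$ and the normalization $\chi^-_k\in W_k$, is measured by $H^1_k$, so for $k>k_0$ the component $\chi^-_k$ is uniquely determined by the lower-degree data --- this, not vanishing, is why $\chi(k_0)^{(k_0)}$ is a complete invariant. Correspondingly the tower of reductions does not stabilize: $Q(k+1)$ is a proper subbundle of $Q(k)$ whenever $\bar\g_{k+1}\neq\lf_{k+1}$, and in the infinite-dimensional setting of this section $Q=Q(\infty)$ is a genuine projective limit, not $Q(k_0)$. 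With that correction your argument coincides with the paper's.
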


Let $G_{-}$ (resp.~$L_{-}$) be the Lie group with the Lie algebra $\g_{-}$ (resp.~$\lf_{-}$). Then we have local embeddings uniquely defined up to $\cL$-equivalence:
\[
\varphi_{{\rm model}}\colon\ G_{-}\to L_{-}\to (N,\f_N),
\]
which we call \emph{the standard embedding}.
\begin{cor}
	Let $\varphi$, $\chi(k_0)^{(k_0)}$ be as in the preceding theorem. If $\chi(k_0)^{(k_0)}=0$ identically, in particular, if $H^1_r(\g_{-},\lf/\bar \g)=0$ for $r>0$, then $\varphi$ is formally $\cL$-equivalent to the standard embedding at every point of $M$.
\end{cor}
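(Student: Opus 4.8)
The plan is to deduce the corollary from Theorem~\ref{thm:inf} together with the two Propositions preceding it, by showing that the hypothesis forces the full structure function $\chi = \chi(\infty)\colon Q \to \Hom(\bar\g,\lf/\bar\g)$ of $\varphi$ to vanish identically on $Q = Q(\infty)$, and then reading off from the completeness of the invariants that $\varphi$ is formally $\cL$-equivalent to the standard embedding $\varphi_{\text{model}}$.

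First I would establish, by induction on $k\ge 0$, that $\chi_\ell \equiv 0$ for all $\ell \le k$. The base case is the construction condition $\chi(k)_p = 0$ for $p\le 0$. For the inductive step, assuming $\chi_\ell \equiv 0$ for $\ell \le k$, the first recursion formula of the relevant Proposition gives $\partial\chi^-_{k+1} = \Psi^-_{k+1}(\chi_\ell, D\chi_\ell\,;\,\ell \le k) = 0$, so $\chi^-_{k+1}$ is a cocycle; since it is $W_{k+1}$-valued (the normalization $\chi^-(k)_k \in W_k$) and $\Hom(\g_-,\lf/\bar\g)_{k+1} = W_{k+1}\oplus\partial(\lf/\bar\g)_{k+1}$ with $\partial(\lf/\bar\g)_{k+1}\subset\ker\partial$, the intersection $W_{k+1}\cap\ker\partial$ is isomorphic to $H^1_{k+1}(\g_-,\lf/\bar\g)$. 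If $k+1 > k_0$ this cohomology group vanishes, whence $\chi^-_{k+1} = 0$; if $k+1 \le k_0$, then $\chi^-_{k+1}$ is one of the components of $\chi(k_0)^{(k_0)}$ and so vanishes by hypothesis --- and under the stronger ``in particular'' hypothesis $H^1_r(\g_-,\lf/\bar\g) = 0$ for all $r > 0$, the cohomological argument already supplies this for every $k+1 \ge 1$. In every case $\chi^-_{k+1} = 0$, and then the second recursion formula yields $\chi^+_{k+1} = \Psi^+_{k+1} = 0$, since $\Psi^+_{k+1}$ vanishes whenever $\chi_\ell = 0$ for $\ell\le k$ and $\chi^-_{k+1} = 0$. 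Hence $\chi_{k+1}\equiv 0$, the induction closes, and $\chi = \chi(\infty) \equiv 0$.

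It then remains to pass from $\chi \equiv 0$ to a formal $\cL$-equivalence. When $\chi = 0$ one has $\om = \om_I$, so $\om$ is a flat $\bar\g$-valued absolute parallelism on $Q$ (recall $E(\infty) = \bar\g$), which is exactly the projective-limit extrinsic bundle data of the standard embedding $\varphi_{\text{model}}\colon G_- \to L_- \to (N,\f_N)$, whose own structure function is likewise zero. Since $\chi(k_0)^{(k_0)}$ is a complete set of invariants by Theorem~\ref{thm:inf}, $\varphi$ and $\varphi_{\text{model}}$ carry isomorphic invariant data; concretely, integrating the flat form $\om$ order by order --- the pro-finite-dimensional analogue of the integration of $\om_L - \om_P$ carried out in Section~\ref{sec3} --- produces the equivalence, which is formal rather than a genuine local isomorphism precisely because $L$ (and hence $L^0$) may be infinite-dimensional. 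The step requiring the most care is the bookkeeping inside the induction: keeping the $W$-normalized part $\chi^-$, which is the only cohomologically constrained quantity, separate from $\chi^+$, which carries no freedom and is algebraically determined by the lower-order data through $\Psi^+$. Once this is arranged, everything else is already packaged in Theorem~\ref{thm:inf} and the recursion Proposition.
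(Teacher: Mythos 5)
Your proof is correct and follows essentially the same route as the paper: the inductive vanishing of $\chi$ via the recursion formulas $\partial\chi^-_{k+1}=\Psi^-_{k+1}$, $\chi^+_{k+1}=\Psi^+_{k+1}$ together with the identification $W_{k+1}\cap\ker\partial\cong H^1_{k+1}(\g_-,\lf/\bar\g)$ is exactly the mechanism the paper relies on (it is the Section~6 analogue of Proposition~\ref{prop8}), and the final passage to a formal equivalence by order-by-order integration of the flat $\bar\g$-valued parallelism is the same step the paper compresses into the remark that the equivalence equation is formally integrable and involutive. You have merely written out explicitly the induction that the paper leaves implicit.
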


In fact, if the hypothesis of the corollary holds, then the differential equation for an $\cL$-equivalence between $\varphi$ and $\varphi_{{\rm model}}$ is proved to be formally integrable and involutive, and therefore has a formal solution. Moreover, it has a local analytic solution in the analytic category. If $\lf$ is finite-dimensional, then the differential equation is of finite type and has a local smooth solution even in the $C^\infty$-category.

\subsection{Examples}

\begin{ex}
	As an example, consider the embeddings $\varphi\colon (M,\f_M)\to (N,\f_N)$, where $(N,\f_N)$ is a contact manifold of dimension $2n+1$. In~other words, $(N,\f_N)$ is a filtered manifold of constant type $\m=\m_{-2}\oplus\m_{-1}=\n_{2n+1}$, where $\n_{2n+1}$ is a $(2n+1)$-dimensional Heisenberg Lie algebra. Note that the Lie bracket $\wedge^2 \m_{-1}\to \m_{-2}$ defines a symplectic form on $\m_{-1}$ up to a~non-zero scale.
	
	In this case, $\lf$ is the (infinite-dimensional) graded Lie algebra associated with the filtered Lie algebra of all contact vector fields on $N$. Let $(x_i,y,z_i)$ be a local coordinate system on $N$ such that the contact distribution $\f_{N}^{-1}$ is defined as an annihilator of the 1-form
	\[
	\theta = {\rm d}y - z_1\,{\rm d}x_1 - \dots - z_n\,{\rm d}x_n.
	\]
	Then it is well-known that all contact vector fields $X_f$ are parametrized by a single function $f$ on $N$ such that
	\[
	\theta(X_f)=f,\qquad (L_{X_f} \theta)\wedge \theta = 0.
	\]
	The Lie bracket of contact vector fields induces the bracket $\{f,g\}$ of functions $f$, $g$ on $N$ such that
	\[
	X_{\{f,g\}} = [X_f,X_g].
	\]
	Explicitly, we have:
	\[
	\{f,g\} = f\frac{\partial g}{\partial y} - g\frac{\partial f}{\partial y} +
	\sum_{i=1}^n \bigg(\frac{{\rm d}f}{{\rm d}x_i}\frac{\partial g}{\partial z_i} - \frac{{\rm d}g}{{\rm d}x_i}\frac{\partial f}{\partial z_i} \bigg),
	\]
	where $\frac{\rm d}{{\rm d}x_i} = \frac{\partial}{\partial x_i} + z_i \frac{\partial}{\partial y}$
for all $i=1,\dots, n$.
	
	The Lie algebra $\lf$ can be described as a set of all contact vector fields $X_f$, where $f$ runs through all polynomials in $(x_i,y,z_i)$, $1\le i\le n$. In~the following, we identify $\lf$ with the space of such polynomials equipped with the above Lie bracket.
	
	The Lie algebra $\m$ can be identified with $\lf_{-}$, which corresponds to the subspace
	\[
	\langle 1, x_i, z_i\mid 1\le i \le n \rangle.
	\]
	Let us classify all graded subalgebras $\g_{-}$ of $\m$ up to $\Aut_0(\m)$.
	
$\bullet$ {\it Case} 1: $\g_{-}=\g_{-1}\subset \m$. Then $\g_{-}$ is necessarily commutative and thus, is an isotropic subspace of $\m_{-1}$. As the symplectic group acts transitively on isotropic subspaces of the same dimension in $\m_{-1}$, we see that the $\g_{-}$ is uniquely determined up to $\Aut_0(\m)$ (=the grading preserving group of automorphisms of $\m$) by its dimension.
	
$\bullet$ {\it Case} 2: $\g_{-}$ is 2-graded. Then we have $\g_{-2}=\m_{-2}$. Assuming that $\g_-$ is generated by $\g_{-1}$, we get that $\g_{-1}$ can be an arbitrary non-isotropic subspace of $\m_{-1}$. Again, modulo the action of $\Aut_0(\m)$ such subspaces are uniquely determined by its dimension and the dimension of the kernel of the restriction of the symplectic form to $\g_{-1}$.
	
	Let us compute $\Prol(\g_{-}, \lf)$ in cases, when $\g_{-}$ is maximal isotropic in $\m_{-1}$ or when $\g_{-}$ is two-graded, and the restriction of the symplectic form to $\g_{-1}$ is non-degenerate.
	
$\bullet$ {\it Case} 1: $\g_{-}$ is a maximal isotropic subspace in $\lf_{-1}$. Then up to $\Aut_0(\m)$, we can assume that
	\[
	\g_{-} = \langle z_1,\dots, z_n\rangle.
	\]
	Let $\g=\Prol(\g_{-},\lf)$. Then it is easy to see that
	\[
	\g_0 = \langle y,\, x_iz_j,\, z_iz_j \mid 1\le i,\,j\le n\rangle.
	\]
	Further a simple computation shows that
	\[
	\g = I(y,z_1,\dots,z_n),
	\]
	the ideal generated by $y$ and $z_1,\dots,z_n$. In~particular, the subspace $\g'$ consisting of all polynomials in variables $x_i$, $1\le i \le n$, forms a complementary linear subspace to $\g$ in $\lf$. Moreover, it~is invariant with the action of $\g_{-}$.
	
	Since $\{x_i,z_j\} = \delta_{ij}$ for all $1\le i,j\le n$, we see that the cohomology complex $C(\g_{-},\g')$ coincides with the \emph{canonical Koszul complex}, which is known to be acyclic. In~particular, we get that $H^1(\g_{-},\g')=0$.
	
$\bullet$ {\it Case} 2: $\g_{-}$ is 2-graded and $\g_{-1}$ is non-degenerate. Let $\dim \g_{-1} = 2m$. Then up to $\Aut_0(\m)$, we can assume that
	\[
	\g_{-} = \langle 1, x_i,z_i \mid 1\le i \le m \rangle.
	\]
	
	Computing $\g_0$ we get
	\[
	\g_0 = \langle y,\, x_ix_j, x_iz_j, z_iz_j,\, x_ax_b, x_az_b, z_az_b \rangle,
	\]
	where $1\le i,k\le m$; $m+1\le a,b\le n$.
	
	Further computations show that a polynomial $f$ belongs to $\g$ if and only if it has no terms linear in $(x_a,z_a)$, $m+1\le a\le n$. In~particular, as complementary linear subspace $\g'$, we can choose
	\[
	\g'= \big\langle x_a P_a(x_i,y,z_i), z_a Q_a(x_i,y,z_i) \big\rangle,
	\]
	$1\le i,k\le m$; $m+1\le a,b\le n$, where $P_a$, $Q_a$, $m+1\le a\le n$, are arbitrary polynomials in~$(x_i,y,z_i)$, $1\le i\le m$.
	
	Note that this space is invariant with respect to $\g_{-}$ and splits into the direct sum of $2(n-m)$ copies of submodules, which are parametrized by a single polynomial $P(x_i,y,z_i)$ each. In~its turn, each such submodule is isomorphic to the graded Lie algebra $\lf'$ associated to the filtered Lie algebra of all contact vector fields on the $(2m+1)$-dimensional contact manifold. Since such Lie algebra coincides with the Tanaka prolongation of $\g_{-}$, we get that $H^1_{+}(\g_{-},\lf')=0$ and hence $H^1_{+}(\g_{-},\g')=0$.
	
	Thus, corollary to Theorem~\ref{thm:inf} can be applied in both cases.
\end{ex}

\begin{ex}
	Consider now the extrinsic geometry of scalar 2nd order ODEs (ordinary differential equations) under contact and point transformations.
	
	We can consider a scalar 2nd order ODE as an embedding $\varphi\colon (M,\f_M)\to (N,\f_N)$, where $(N,\f_N)$ is a $4$-dimensional jet space $J^2(\R,\R)$ with the canonical jet space filtration $\f_N$:
	\[
	\f_N^{-1} \subset f_N^{-2} \subset \f_N^{-3} = TN,
	\]
	and $M$ is a $3$-dimensional manifold (a 2nd order ODE) such that $\varphi(M)$ is transversal to the projection $\pi\colon J^2(\R,\R)\to J^1(\R,\R)$ at each point $x\in\varphi(M)$. The filtration $\f_M$ is induced by the filtration $\f_N$:
	\[
	\f_M^{i} = \f_N^{i}\cap TM\qquad\text{for all}\quad i\in \Z.
	\]
	
{\samepage	Let $\m$ be the symbol algebra of $(N,\f_N)$. It is well-known that $\m=\m_{-1}\oplus \m_{-2}\oplus \m_{-3}$ such that
	\begin{gather*}
	\m_{-1} = \langle E_2, X \rangle, \qquad
	\m_{-2} = \langle E_1 \rangle, \qquad
	\m_{-3} = \langle E_0 \rangle,
	\end{gather*}
where
	\begin{gather*}
[X,E_i] = E_{i-1},\qquad [E_i,E_j]=0.
	\end{gather*}

}

\noindent
Note that the fibres of the projection $\pi$ are tangent to the characteristic direction of $\f_{N}^{-2}$, which lies inside $\f_N^{-1}$ and corresponds to the subspace $\langle E_2 \rangle$ in $\m_{-1}$.
	
	Since $M$ is transversal to this direction, the space $\varphi_*(T_xM)$ is a $3$-dimensional vector space complementary to it, and $\gr \varphi_*(T_xM) = \gr (\f_M)_{\varphi(x)}$ is a $3$-dimensional graded subalgebra of $\m$ complementary to $\langle E_2 \rangle$. It is easy to see that all such subspaces are equivalent to the fixed subspace
	\[
	\g_{-} = \langle X,E_1, E_0 \rangle
	\]
	under the action of $\Aut_0(\m)$.
	
	Consider now the extrinsic invariants of the embedding $\varphi$ under contact transformations. Let $\lf=\lf_{\text{cont}}$ be is the (infinite-dimensional) graded Lie algebra associated with the filtered Lie algebra of all vector fields on $N$ preserving the filtration $\f_N$. Due to Lie's theorem all such vector fields are the lifts of the contact vector fields on $J^1(\R,\R)$.
	
	As in the previous example, the Lie algebra $\lf$ can be described as a set of all contact vector fields $X_f$, where $f$ runs through all polynomials in $(x,y,z)$. The grading of $\lf$ is defined as follows. First, define weighted degree on $\R[x,y,z]$ by setting $\deg x = 1$, $\deg y = 3$, $\deg z = 2$. Then for any weighted homogeneous polynomial $f\in\R[x,y,z]$ we define
	\[
	\deg X_f = \deg f - 3.
	\]
	In the following we identify $\lf$ with the space of such polynomials equipped with the above Lie bracket.
	\begin{rem}
		This construction easily generalizes to the description of the graded Lie algebra associated with the infinite-dimensional Lie algebra of vector fields preserving the contact distribution on $J^k(\R,\R)$ for any $k\ge 1$. Namely, in the case the (abstract) Lie algebra is still isomorphic to $\lf$, but the grading is defined differently according to the rules: $\deg x = 1$, $\deg y = k+1$, $\deg z = k$, and $\deg X_f =\deg f-k-1$.
	\end{rem}
	
	In this notation, the Lie algebra $\m=\lf_{-}$ has the form
	\[
	\langle 1, x, x^2, z\rangle.
	\]
	The subalgebra $\g_{-}$ then corresponds to the subspace
	\[
	\g_{-} = \langle 1, x, z \rangle.
	\]
	
	Let us compute $\g=\Prol(\g_{-}, \lf)$. It is easy to see that
	\begin{gather*}
	\lf_0 = \big\langle y, x^3, xz \big\rangle,\qquad
	\g_0 = \langle y, xz \rangle,\\
	\lf_1 = \big\langle xy, x^4, x^2z, z^2\big\rangle,\qquad
	\g_1 = \big\langle x(y-xz), z^2 \big\rangle.
	\end{gather*}
	
	Further simple computations show that
	\[
	\g = \{ P(z,y-xz)+Q(z,y-xz)x \},
	\]
	where $P$, $Q$ are two arbitrary polynomials in 2 variables. Note that $\g$ coincides with all contact polynomial symmetries of the trivial differential equation $y''=0$, or, in other words, with all polynomial vector fields on $J^1(\R,\R)$ which preserve the $1$-dimensional vector distribu\-tion~$\langle {\rm d}/{\rm d}x \rangle$.
	
	{\samepage
Let us show that $H^1_+(\g_{-},\lf/\g)=0$. The short exact sequence
	\[
	0\to \g \to \lf \to \lf/\g \to 0
	\]
	induces the long exact sequence
	\begin{equation}\label{lex}
	\cdots \to H^1(\g_{-}, \lf) \to H^1(\g_{-}, \lf/\g) \to H^2(\g_{-},\g) \to \cdots.
	\end{equation}
	So, it is sufficient to prove that $H^1_{+}(\g_{-}, \lf) = 0$ and $H^2_{+}(\g_{-},\g)=0$. This is done in two steps below.

}
	
	\textit{Step} 1. Denote by $\g_{-}[-1]\subset \lf[-1]$ the same abstract Lie algebras, but with the gra\-ding induced from $J^1(\R,\R)$, that is by assuming that $\deg x = 1$, $\deg y= 2$, $\deg z= 1$ and $\deg X_f = \deg f - 2$. Then it is well known that $\lf[-1]$ is the Tanaka prolongation of $\g_{-}[-1]$ and hence $H^1_r\left(\g_{-}[-1],\lf[-1]\right)=0$ for all $r\ge 0$. We note that $H^1\left(\g_{-}[-1],\lf[-1]\right)$ may have non-zero components in negative degree. But it is easy to check that the negative part of $C^1\left(\g_{-}[-1],\lf[-1]\right)=\Hom\left(\g_{-}[-1],\lf[-1]\right)$ belongs to the negative part of $C^1(\g_{-},\lf)$. Hence, we also get $H^1_+(\g_{-},\lf)=0$.
	
	\textit{Step} 2. Let us prove that $H^2_{+}(\g_{-},\g)=0$. The Legendre transform
	\[
	(x,y,z) \mapsto (-z, y-xz, x)
	\]
	induces an automorphism of $\lf$ that maps this vector distribution to $\langle \partial/\partial z \rangle$ and, hence, maps the subalgebra $\g$ to the subalgebra of all polynomial vector fields on the plane $(x,y)$ lifted to~$J^1(\R,\R)$.
	
	Thus, computing $H(\g_{-},\g)$ we can identify $\g$ with the Lie algebra of all polynomial vector fields on the plane with the grading induced from the conditions $\deg x = 2$, $\deg y = 3$. In~particular, under this identification we have:
	\[
	\g_{-3} = \langle \partial/\partial y \rangle,\qquad
	\g_{-2} = \langle \partial/\partial x \rangle, \qquad
	\g_{-1} = \langle x\partial/\partial y \rangle.
	\]
	
	We note that $V = \langle \partial/\partial x, \partial/\partial y\rangle$ forms an ideal in $\g_{-}$. Moreover, the cohomology complex for the $V$-module $\g$ is exactly the standard Koszul cohomology complex, which is known to be acyclic. Thus, we have $H^q(V,\g)=0$ for all $q>0$ and $H^0(V,\g) = V$.
	
	Now use Serre--Hochschild spectral sequence~\cite{sh1953} associated with the ideal $V\subset \g_{-}$. Its second term $E_2^{p,q}$ is equal to
	\[
	E_2^{p,q} = H^p \big(\g_{-}/V, H^q(V,\g)\big).
	\]
	Since we want to compute $H^2(\g_{-},\g)$, we can assume that $p+q=2$. If $q>0$, then by the above $H^q(V,\g)=0$. Finally, if $q=0$ and $p=2$, then
	\[
	H^2 \big(\g_{-}/V, H^0(V,\g)\big) = 0,
	\]
	since $\g_{-}/V$ is $1$-dimensional. This proves that $H^2(\g_{-},\g)=0$. In~particular, this implies that an arbitrary 2nd order ODE is contactly
	equivalent to the trivial equation $y''=0$.
	
	Consider now the case of the pseudogroup of point transformations on the plane. Let $\lf=\lf_{\textrm{point}}$ be an infinite-dimensional graded Lie algebra associated with the (filtered) Lie algebra of~all infinitesimal point transformations acting on $J^2(\R,\R)$. It is well-known that $\lf_{\textrm{point}}$ can be identified with the subalgebra of $\lf_{\textrm{cont}}$ consisting of all vector fields $X_f$, where $f$ is a polynomial in~$x$, $y$, $z$ linear in $z$. We note that the grading of $\lf$ is induced from its prolongation to $J^2(\R,\R)$ and the standard filtration on $J^2(\R,\R)$ (see above), which is different from the more conventional grading on the Lie algebra of polynomial vector fields on the place.
	
	We have the same $\g_{-}\subset \lf_{-}$ as in the contact case. But the prolongation is now different. Similar to the contact case, it is easy to show by direct computation that the prolongation of~$\g_{-}$ in $\lf_{\textrm{point}}$ coincides with the point symmetry algebra of the trivial equation $y''=0$, which is exactly the Lie algebra $\mathfrak{sl}(3,\R)$ of all projective vector fields on the plane:
	\begin{gather*}
	\g_{-3} = \langle \partial/\partial y \rangle,\qquad
	\g_{-2} = \langle \partial/\partial x \rangle, \qquad
	\g_{-1} = \langle x\partial/\partial y \rangle, \\
	\g_{0} = \langle x\partial/\partial x, \, y\partial/\partial y\rangle,\\
	\g_{1} = \langle y\partial/\partial x \rangle, \qquad
	\g_{2} = \langle x U \rangle, \qquad
	\g_{3} = \langle y U \rangle,
	\end{gather*}
	where $U= x\partial/\partial x+y\partial/\partial y$.
	
	As above, we can compute $H_{+}^1(\g_{-},\lf/\g)$ using the long exact sequence~\eqref{lex}. Next, we can prove that $H^k_{+}(\g_{-},\lf)=0$ for $k=1$ and $k=2$ by means of the Serre--Hochschild spectral sequence as in Step 2 above. Then~\eqref{lex} implies the isomorphism
	\[
	H^1_{+}(\g_{-}, \lf/\g) \cong H^2_{+}(\g_{-},\g).
	\]
	We note that the cohomology space on the right (with a different grading) corresponds to the invariants of the parabolic geometry associated with a scalar 2nd order ODE~\cite{capslovak,tan79}. So we see that the extrinsic geometry of the 2nd order ODE under point transformations agrees the parabolic geometry associated with this equation.
\end{ex}

\subsection*{Acknowledgements}

The third author is partially supported by JSPS KAKENHI Grant Number 17K05232.

%\cite{bou,dk,kolar,O-T,robles,S,sulanke}

\pdfbookmark[1]{References}{ref}
\LastPageEnding

\end{document}